\newcommand{\eps}{\varepsilon}
\newcommand{\ee}{\varepsilon}
\newcommand{\vphi}{\varphi}
\newcommand{\R}{\mathbb{R}}
\newcommand{\Q}{\mathbb{Q}}
\newcommand{\C}{\mathbb{C}}
\newcommand{\N}{\mathbb{N}}
\newcommand{\Z}{\mathbb{Z}}
\newcommand{\E}{\mathcal{E}}
\newcommand{\Sm}{\Sigma}
\newcommand{\br}{{\boldsymbol r}}
\newcommand{\bL}{{\boldsymbol L}}
\newcommand{\bss}{{\boldsymbol s}}
\newcommand{\lambdab}{{\boldsymbol \lambda}}
\newcommand{\ang}[1]{\langle#1\rangle}
\newcommand{\J}{\mathcal{J}}
\newcommand{\lf}{{\mathfrak f}}
\newcommand{\tn}[1]{\textnormal{#1}}
\renewcommand{\mod}[1]{~\pr{\textnormal{mod}~#1}}
\newcommand{\Er}{{\mathcal E}}
\newcommand{\hessK}{{\mathfrak D}}
\newcommand{\Hess}{\operatorname{Hess}}
\newtheorem*{theo*}{Theorem}
\newtheorem{theo}{Theorem}
\newtheorem{theorem}[theo]{Theorem}
\newtheorem{ezer}{Exercise}
\newtheorem{conj}[ezer]{Conjecture}
\newtheorem{lemma}{Lemma}
\theoremstyle{definition}
\newtheorem{remark}{Remark}
\newtheorem*{rem*}{Remark}
\def\sumstar{\operatornamewithlimits{\sum\nolimits^*}}
\newcommand{\ssum}[1]{\sum_{\substack{#1}}}
\newcommand{\ssumstar}[1]{\operatornamewithlimits{\sum\nolimits^*}_{\substack{#1}}}
\newcommand{\pprod}[1]{\prod_{\substack{#1}}}
\newcommand{\pr}[1]{\mathchoice{\left( #1\right)}{(#1)}{(#1)}{(#1)}}
\newcommand{\floor}[1]{{\left\lfloor {#1} \right\rfloor}}
\newcommand{\e}[1][]{\ifthenelse{\equal{#1}{}}{{\rm e}}{\operatorname{e}\pr{ #1}}}
\newcommand{\Gal}{\operatorname*{Gal}}
\newcommand{\bu}{\boldsymbol{u}}
\newcommand{\df}{\mathrm{d}}
\newcommand{\comment}[1]{}
\newcommand{\Oh}{\mathcal{O}}
\DeclareMathOperator{\Li}{Li}
\DeclareMathOperator{\Lie}{{\mathfrak L}}
\DeclareMathOperator{\Vol}{Vol}
\DeclareMathOperator{\cs}{cs}
\DeclareMathOperator{\denom}{denom}
\newcommand{\bs}{\boldsymbol }
\DeclareMathOperator{\cz}{c_0}
\DeclareMathOperator{\deks}{s}
\renewcommand\Re{\operatorname{Re}}
\renewcommand\Im{\operatorname{Im}}
\newcommand\minus{\smallsetminus}
\newcommand{\1}{{\mathbf 1}}
\newcommand{\syf}[1]{\left(\!\left(#1\right)\!\right)}
\newcommand{\abs}[1]{\left|#1\right|}
\newcommand{\norm}[1]{\left\|#1\right\|}
\renewcommand{\bar}[1]{\overline{#1}}
\newcommand{\gh}{H}
\newcommand{\SL}{\tn{SL}}
\newcommand\numberthis{\stepcounter{equation}\tag{\theequation}}
\newcommand{\image}[2]{\includegraphics[width=#1\textwidth]{#2.png}}
\let\originalleft\left
  \let\originalright\right
\renewcommand{\left}{\mathopen{}\mathclose\bgroup\originalleft}
  \renewcommand{\right}{\aftergroup\egroup\originalright}
\numberwithin{equation}{section}
\title[Modularity and distribution of quantum knots invariants]{Modularity and value distribution of quantum invariants of hyperbolic knots}
\date{\today}
\author{S. Bettin}
\address{DIMA - Dipartimento di Matematica, Via Dodecaneso, 35, 16146 Genova, Italy}
\email{bettin@dima.unige.it}
\author{S. Drappeau}
\address{Aix Marseille Universit\'e, CNRS, Centrale Marseille, I2M UMR 7373, 13453 Marseille, France}
\email{sary-aurelien.drappeau@univ-amu.fr}
\begin{document}

\begin{abstract}
  We obtain an exact modularity relation for the $q$-Pochhammer symbol. Using this formula, we show that Zagier's modularity conjecture for a knot $K$ essentially reduces to the arithmeticity conjecture for $K$. In particular, we show that Zagier's conjecture holds for hyperbolic knots $K\neq 7_2$ with at most seven crossings.
  For $K=4_1$, we also prove a complementary reciprocity formula which allows us to prove a law of large numbers for the values of the colored Jones polynomials at roots of unity. We conjecture a similar formula holds for all knots and we show that this is the case if one assumes a suitable version of Zagier's conjecture.
\end{abstract}

\subjclass[2010]{11B65 (primary), 57M27, 11F03, 60F05 (secondary)}

\keywords{$q$-Pochhammer symbol, dilogarithm, quantum knot invariants, modularity, limit law}

\maketitle

\section{Introduction}

%\subsection{Modularity conjecture for knots}

Among knot invariants, the colored Jones polynomials $\{J_{K,n}\}_{n\geq2}$ and the Kashaev invariants~$\{\ang{K}_N\}_{N\geq 2}$ are of particular interest, by their relation to quantum field theory, and the geometry of hyperbolic manifolds~\cite{Witten1994, MurakamiMurakami2001}. We refer to \textit{e.g.} \cite{MurakamiMurakami2001, Yokota2011} for their definitions; by~\cite{MurakamiMurakami2001}, the two invariants are related by~$\ang{K}_N=J_{K,N}(\e^{2\pi i /N})$. We refer to~\cite{Jones,Kashaev1995,Champanerkar_etal} for more results and references on this topic.

The Kashaev invariant is extended to a function on roots of unity by setting, for~$(h, k)=1$, 
$\J_{K,0}(\e^{2\pi i h/k}):=J_{K,k}(\e^{2\pi i h/k})$. For fixed~$k$, the values~$(J_{K,0}(\e^{2\pi i h/k}))_{(h, k)=1}$ are simply the Galois conjugates of $\ang{K}_N $ in~$\Q(\e^{2\pi i / k})$.
In the case of $K=4_1$, the simplest hyperbolic knot, we have explicitly
\begin{align}\label{jones41}
  \J_{4_1,0}(q)= \sum_{r=0}^{\infty} | (1-q)(1-q^2)\dotsb (1-q^r)|^2
\end{align}
for a root of unity $q$. In general, $\J_{K,0}(q)$ can be written as a series of this kind, involving a ratio of $q$-Pochhammer symbols of various indexes. See Section~\ref{sec:recip2-poch} for some more examples and the precise definition of~$\J_{K,0}(q)$ in the cases we will consider.

The \emph{volume conjecture}~\cite[Section~5]{MurakamiMurakami2001} predicts that 
\begin{equation}\label{eq:vc}
  \lim_{N\to\infty}\frac{\log |\ang{K}_N|}{N}=\lim_{N\to\infty}\frac{\log |\J_{K,0}(\e^{2\pi i /N})|}{N}=\frac{\norm{K}}{2\pi} v_3,
\end{equation}
where~$\norm{K}$ is related to the Gromov simplicial volumes of the complement of~$K$, and~$v_3$ is a suitable constant (the volume of the ideal regular tetrahedron in~${\mathbb H}^3$).
All the knots we will refer to in this paper will be hyperbolic; in this case, the asymptotic formula~\eqref{eq:vc} was conjectured by Kashaev~\cite{Kashaev1997}, and $\norm{K}v_3 = \tn{Vol} (K)$, the hyperbolic volume of the complement of $K$ in the $3$-sphere.
This is motivated by the analogy between the usual dilogarithm, which measures volumes of tetrahedra in the hyperbolic space, and the quantum dilogarithm, which are the building blocks of Kashaev's invariant.

This conjecture was extended in~\cite{Gukov2005} to a full asymptotic expansion, referred to as the \emph{arithmeticity conjecture} in~\cite{CalegariGaroufalidisZagier}, whereas the corresponding question for the imaginary part of the logarithm is conjectured to involve the Chern-Simons invariant $\cs (K)$ of $K$~\cite{MurakamiMurakamiEtAl2002, Gukov2005}. The arithmeticity conjecture has been proved for all knots with up to seven crossings in~\cite{AndersenHansen2006,Ohtsuki52,OhtsukiYokota6,Ohtsuki7}. We refer to~\cite{DimofteGukovEtAl2009,Murakami2017,MurakamiYokota2018,Murakami2011,KashaevTirkkonen}
and the references therein for more results and information on the volume conjecture.

In~\cite{Zagier2010}, Zagier studies several examples of what is called ``quantum modular forms''. Motivated by extensive numerical computations, he predicts that $\J_{K,0}$ satisfies an approximate modularity property which relates, in the limit as $x\to\infty$ among rationals of bounded denominator, $\J_{K,0}(\e^{2\pi i ({ax+b})/({cx+d})})$ with $\J_{K,0}(x)$ for any $(\begin{smallmatrix}a &b \\ c& d\end{smallmatrix})\in\SL(2,\Z)$. More specifically, given a hyperbolic knot $K$, the following conjecture is made (cf. also~\cite{Garoufalidis2018})

\begin{conj}[Zagier's modularity conjecture for $K$]\label{conj:zagier}
  For all~$\gamma\in\SL_2(\Z)$ such that\footnote{In what follows, a matrix in~$\SL(2, \R)$ acts on~$\C$ by homography.}~$\alpha:= \gamma(\infty) \in\Q$, there exist $C_K(\alpha)\in\C$ and a sequence~$(D_{K,n}(\alpha))_{n\geq 0}$ of complex numbers such that, for all~$M\in\N$ and~$x\in\Q$, with $x\to\infty$, there holds
  \begin{equation}\label{eq:J0-approxmod}
    \frac{\J_{K,0}(\e[\gamma(x))]}{\J_{K,0}(\e[x])} = \Big(\frac{2\pi}{\hbar}\Big)^{3/2}\e^{i\frac{\Vol (K)-i\cs (K)}{\hbar}} C_K(\alpha)\Big(\sum_{0\leq n < M} D_{K,n}(\alpha) \hbar^{n} + O(\hbar^{M})\Big), \quad \hbar:=\frac{2\pi i}{x-\gamma^{-1}(\infty)},
  \end{equation}
  where $\e[x]:=\e^{2\pi i x}$ and the implied constant depends at most on~$\alpha$, on the denominator of~$x$ and on $M$. Moreover, if~$F_K$ is the invariant trace field of $K$, and $F_{K,\alpha}:=F_K(\e[\alpha])$, then:
  \begin{itemize}
  \item $C_K(\alpha)$ is a product of rational powers of elements of~$F_{K,\alpha}$;
  \item $D_{K,n}(\alpha)\in F_{K,\alpha}$ for $n\geq0$.
  \end{itemize}
\end{conj}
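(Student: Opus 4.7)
The plan is to combine the exact modularity formula for the $q$-Pochhammer symbol (the first main result announced in the abstract) with the arithmeticity conjecture to deduce~\eqref{eq:J0-approxmod}. First, I would use the representation of~$\J_{K,0}(q)$ as a finite-dimensional multi-sum of products or ratios of $q$-Pochhammer symbols of the type appearing in~\eqref{jones41}, indexed by a vector~$\bs r\in\N^d$ and arising from the $R$-matrix construction, as provided in Section~\ref{sec:recip2-poch}. Substituting~$q=\e[\gamma(x)]$, each individual $q$-Pochhammer factor can be rewritten, via the exact modularity relation, as a $q$-Pochhammer at~$\e[x]$ times an explicit analytic function of~$\hbar=2\pi i/(x-\gamma^{-1}(\infty))$ whose behaviour as~$\hbar\to 0$ is dominated by a dilogarithmic ``phase''~$\Phi_K(\bs t;\alpha)/\hbar$, with~$\bs t$ the continuous analogue of~$\hbar\bs r$.

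The second step is to turn the transformed multi-sum into the right-hand side of~\eqref{eq:J0-approxmod} by a multi-dimensional saddle-point analysis as~$\hbar\to 0$. The remaining ratios of $q$-Pochhammer evaluations at~$\e[x]$ stay bounded, so the exponential behaviour is governed by~$\Phi_K$ alone. The critical point equations for~$\Phi_K$ should match the Neumann--Zagier gluing equations of the hyperbolic structure on the complement of~$K$ deformed by~$\alpha$; this identifies the leading exponential factor with~$\e^{i(\Vol(K)-i\cs(K))/\hbar}$, while the Gaussian integration around the geometric critical point~$\bs t_K(\alpha)$ produces a prefactor in~$\hbar$, with the precise exponent~$3/2$ arising from the combination of the~$\hbar^{-1/2}$ factors coming from the exact $q$-Pochhammer modularity and the~$\hbar^{d/2}$ from the Gaussian integral in dimension~$d$.

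To obtain the algebraic structure of the coefficients, one identifies~$C_K(\alpha)$ with the value of the smooth amplitude at~$\bs t_K(\alpha)$ times the square root of the Hessian determinant of~$\Phi_K$, and~$D_{K,n}(\alpha)$ with the coefficients of the higher-order Laplace expansion obtained by expanding both amplitude and phase in Taylor series around~$\bs t_K(\alpha)$ and performing the resulting Gaussian moments. Here the arithmeticity conjecture is used: it guarantees that~$\bs t_K(\alpha)$ is algebraic over~$F_{K,\alpha}$, that the Hessian at this point lies in~$F_{K,\alpha}$, and that all higher Taylor coefficients of~$\Phi_K$ also lie in~$F_{K,\alpha}$. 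Consequently the~$D_{K,n}(\alpha)$ are rational polynomial expressions in these quantities and hence lie in~$F_{K,\alpha}$, while~$C_K(\alpha)$ is a product of rational powers of elements of~$F_{K,\alpha}$.

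The principal obstacle will be to justify the saddle-point expansion rigorously and uniformly for~$x\in\Q$ of bounded denominator. The summation runs over lattice points rather than a continuous domain, so one must insert a Poisson summation or smooth Euler--Maclaurin reduction to replace the sum by an integral with controlled error; boundary contributions of the multi-sum (where some~$r_i$ are near the endpoints of the summation range) must be shown to be negligible; and one must verify that non-geometric critical points of~$\Phi_K$, corresponding to other representations of the fundamental group of the complement, give strictly subdominant contributions. Finally, keeping track of the arithmeticity data so that the resulting~$D_{K,n}(\alpha)$ truly lie in~$F_{K,\alpha}$ (and not in some extension introduced by auxiliary Gaussians) is delicate and is precisely where the exact form of the $q$-Pochhammer modularity, rather than a mere asymptotic, becomes essential.
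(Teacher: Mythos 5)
Your proposal captures the high-level architecture of the paper's argument: represent $\J_{K,0}$ as a finite multi-sum of ratios of $q$-Pochhammer symbols, transfer $\e[\gamma(x)] \to \e[x]$ via an exact Pochhammer modularity relation, convert the lattice sum to an integral by Poisson summation, run a multi-dimensional saddle-point expansion against a dilogarithmic potential, and track the algebraicity of the coefficients. This is indeed the strategy of the paper's proof of Theorem~\ref{th:1}, which combines Theorem~\ref{th:Ir} with the analytic machinery of Ohtsuki and Ohtsuki--Yokota.

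Two points are materially off, however. First, the potential $\hat V$ and its geometric critical point $\bs\mu$ in the paper are \emph{independent of $\alpha$}: there is no ``hyperbolic structure deformed by $\alpha$''. The dependence on $\alpha=p/q$ enters only through a sub-leading perturbation of size $O(qd/k)$ of the potential (the $\Er_s$ error terms from Theorem~\ref{th:Ir}) and, crucially, through the splitting of the sum into residue classes $\bs s \bmod q$. In the paper, the critical point and Hessian lie in $F_K$, not $F_{K,\alpha}$; the cyclotomic enlargement to $F_{K,\alpha}$ only appears through this residue-class structure. Second, and more seriously, you identify the algebraicity of $C_K(\alpha)$ and $D_{K,n}(\alpha)$ as ``delicate'' but give no mechanism for it. The paper's actual mechanism (Lemma~\ref{conste}) is a Galois-invariance argument: one forms the sum $\sum_{\bs s \bmod q} \exp(C(\bs s))\,\omega_{\bs s,n}$ over all residue classes, shows $\omega_{\bs s,n}\in \tilde F_{K,q}=\Q(\e[1/q],\bs\nu^{1/q})$, and then proves that the automorphisms $\sigma(\nu_i^{1/q})=\nu_i^{1/q}\e[u_i/q]$ act by translating $\bs s\mapsto \bs s-\bar p\bs u$, so that the $q$-th power of the full $\bs s$-sum descends to $F_{K,q}$. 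Without this aggregation over residue classes, the individual saddle contributions do \emph{not} lie in $F_{K,\alpha}$, so the step you flag as the principal obstacle really is the place where a new idea is required — and your sketch does not supply it.
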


In the case~$K = 4_1$, Garoufalidis and Zagier~\cite{GaroufalidisZagier} announced a proof of Theorem~\ref{th:1}, and also numerically investigated the conjecture for other knots.
The case of the $4_1$ knot is special and rather simpler than that of other knots, due to the fact that in this case all the summands in the definition~\eqref{jones41} of $\J_{4_1,0}$ are positive. One can then use Laplace's method to extract the asymptotic expansion~\eqref{eq:J0-approxmod}. In general, this positivity is not present and there is a remarkable amount of cancellation among the terms.
%%%
%Indeed, $\J_{K,0}(\e[1/N])$ is typically exponentially smaller than the largest summands in its definition. To overcome this problem more sophisticated methods from complex analysis are needed and in particular it is crucial to obtain an exact holomorphic modularity relation for the $q$-Pochhammer symbol. This symbol is of crucial importance in the theory of $q$-series and often appears in the theory of modular forms and combinatorics (see for example~\cite{Berndt2010} and references therein).
%%
Indeed, $\J_{K,0}(\e[1/N])$ is typically exponentially smaller than the largest summands in its definition, and this prevents one from applying a direct estimation based on Laplace's method. We circumvent this serious obstacle by obtaining a new modularity relation, with a precise description of the holomorphic and periodic behaviour of the error terms, for the $q$-Pochhammer symbol. This symbol is of crucial importance in the theory of $q$-series and often appears in the theory of modular forms and combinatorics (see for example~\cite{Berndt2010} and references therein).
For $r\in\Z_{\geq0}$, it is defined as
\begin{equation}\label{eq:def-Pochhammer} 
(q)_r:=\prod_{i=0}^r(1-q)^r,\qquad q\in\C.
\end{equation}
When $|q|<1$, one can also take $r=+\infty$ and obtain the Dedekind $\eta$-function $\eta(z):=\e[z/24](\e[z])_{\infty},$ an important example of a (half-integral weight) modular form. As such, $\eta$ satisfies the relation 
$$\eta(\gamma z)=\vartheta(\gamma)(cz+d)^\frac12\eta(z),\qquad \gamma=(\begin{smallmatrix}a &b \\ c& d\end{smallmatrix})\in\SL(2,\Z),$$ 
for a certain ``multiplier system'' $\vartheta$ (see~\cite[Section 2.8]{Iwaniec1997}).
This modularity relation can be naturally extended to the partial product at root of unities. Indeed, in Theorem~\ref{th:Ir} below we show that for $\alpha \in\Q$, $1\leq r< \tn{den}(\gamma \alpha)$ we have
\begin{equation}\label{eq:mro}
\e[\tfrac {\gamma \alpha}{24}](\e[\gamma \alpha])_r= \vartheta(\gamma) \e[\tfrac{\alpha}{24}](\e[ \alpha])_{r'}\psi_\gamma(\alpha,r)
\end{equation}
for some $1\leq r'< \tn{den}( \alpha)$ and where $\psi_\gamma(\alpha,r)$ is an explicit function with suitable holomorphicity properties. We refer to Section~\ref{repsec} for the precise formulation of this reciprocity formula which we believe to be of independent interest.
 With this new tool, we can reduce Zagier's modularity conjecture to a slightly modified form of the arithmeticity conjecture, thus showing that the two conjectures are ``morally equivalent''.
In particular, we are able to prove the conjecture for all hyperbolic knots $K\neq 7_2$ with at most seven crossings, since for these the arithmeticity conjecture is known by works of Andersen and Hansen~\cite{AndersenHansen2006} (in the case~$K=4_1$), Ohtsuki~\cite{Ohtsuki52,Ohtsuki7} (in the case~$K=5_2$ and with~$7$ crossings) and Ohtsuki and Yokota~\cite{OhtsukiYokota6} (in the case of $6$ crossings).

\begin{theo}\label{th:1}
  Let~$K\neq 7_2$ be a hyperbolic knot with at most~$7$ crossings. Then Conjecture~\ref{conj:zagier} holds for~$K$. The constant~$C_K(\alpha)$ has the shape
  \begin{equation}
    C_K(\alpha) = \e\Big(\frac{\nu_K}2 \deks(\alpha)\Big) c^{\nu_K/2} \Lambda_{K, \alpha}^{1/c} \delta_K^{-1/2},\label{eq:expr-CK}
  \end{equation}
  where $c$ is the denominator of~$\alpha$, $\Lambda_{K, \alpha} \in F_{K,\alpha}$,~$\nu_K\in\Z$, and~$\delta_K \in F_K$, $\nu_K$ is given in Figure~\ref{fig:nuK} below and $\deks(\alpha)$ is the Dedekind sum (see~\eqref{eq:def-dedekind}).
\end{theo}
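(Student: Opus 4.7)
The overall plan is to reduce Conjecture~\ref{conj:zagier} to the arithmeticity conjecture via the $q$-Pochhammer reciprocity formula~\eqref{eq:mro}, and then appeal to the cases of that conjecture which are known in the literature. For each hyperbolic knot $K\neq 7_2$ with at most $7$ crossings, $\J_{K,0}(q)$ is a finite nested $q$-hypergeometric sum of ratios of $q$-Pochhammer symbols $(q^j)_r$ (the expressions used in~\cite{AndersenHansen2006,Ohtsuki52,OhtsukiYokota6,Ohtsuki7}, presented in Section~\ref{sec:recip2-poch}). The first step is to apply \eqref{eq:mro} to every factor $(\e[\gamma\alpha])_r$ occurring in $\J_{K,0}(\e[\gamma(x)])$; this converts each summand into an explicit analytic factor $\psi_\gamma(\alpha,r)$ (and the multiplier $\vartheta(\gamma)$) times $(\e[\alpha])_{r'}$, where $r'<\tn{den}(\alpha)$. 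Interchanging summations and grouping the factors of $\psi_\gamma$ that depend on the large variable $x$ (through $\hbar=2\pi i/(x-\gamma^{-1}(\infty))$), one obtains a functional equation of the form
\[
  \frac{\J_{K,0}(\e[\gamma(x)])}{\J_{K,0}(\e[x])} = \Phi_{K,\gamma}(x)\cdot\bigl(1+\tn{error}\bigr),
\]
where $\Phi_{K,\gamma}$ is holomorphic in $\hbar$ near $\hbar=0$ and can be expanded in an asymptotic power series via Laplace's method.

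The second step is to identify the leading constant and successive coefficients of this expansion. Here one uses the arithmeticity conjecture for $K$, which is known for the listed knots by \cite{AndersenHansen2006,Ohtsuki52,OhtsukiYokota6,Ohtsuki7}, together with a small enhancement (``a slightly modified form'') to cover the off-critical values $x=\gamma(x_0)$ with $x_0$ of bounded denominator. The output of these asymptotic expansions is a power series in $\hbar$ with coefficients in $F_{K,\alpha}$, which yields the sequence $(D_{K,n}(\alpha))_{n\ge 0}$ of \eqref{eq:J0-approxmod}.

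The third step is to verify the explicit arithmetic shape \eqref{eq:expr-CK} of the leading constant $C_K(\alpha)$. The Dedekind sum $\deks(\alpha)$ and the factor $c^{\nu_K/2}$ are inherited from the multiplier system $\vartheta(\gamma)$ of the Dedekind $\eta$-function, tracked through the number of $\eta$-factors produced by applying~\eqref{eq:mro} once per $q$-Pochhammer symbol in the series; the integer $\nu_K$ records this multiplicity and can be read off from the specific series representation of $\J_{K,0}$ for each $K$ (yielding the tabulation in Figure~\ref{fig:nuK}). The factor $\Lambda_{K,\alpha}\in F_{K,\alpha}$ comes from the product of $\psi_\gamma(\alpha,r)$-contributions evaluated at the critical index, and the field-of-definition statement follows from the arithmeticity conjecture.

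The main obstacle is the heavy cancellation in the series defining $\J_{K,0}(\e[1/N])$: generically the sum is exponentially smaller than its largest summand, so Laplace's method cannot be applied directly as in~\cite{GaroufalidisZagier} for $K=4_1$. The reciprocity formula \eqref{eq:mro} sidesteps this precisely because it transforms each exponentially large summand in $\J_{K,0}(\e[\gamma(x)])$ into an analytically controlled factor times a summand of $\J_{K,0}(\e[x])$; the exponential cancellation on the left therefore becomes visible on the right as the standard asymptotic shape of $\J_{K,0}(\e[x])$ predicted by arithmeticity. The tracking of holomorphicity and periodicity of $\psi_\gamma$ (essentially the content of Theorem~\ref{th:Ir}) is what makes the final error terms genuinely $O(\hbar^M)$ rather than just $o(1)$. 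The exclusion of $7_2$ reflects that, for this particular knot, the available series representation does not fit the template required by this reduction (either the enhanced arithmeticity input or the form of the summand is not of the required type).
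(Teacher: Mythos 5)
Your high-level plan — apply the reciprocity formula~\eqref{eq:mro} to each $q$-Pochhammer factor of the Kashaev sum, and reduce the modularity conjecture to a version of the arithmeticity conjecture — is indeed the paper's strategy, but there are several concrete gaps in the execution. The most serious is in your Step~1: once~\eqref{eq:mro} is applied, the resulting sum over the lattice variable~$\br$ still exhibits heavy cancellation for $K\neq 4_1$, so one cannot finish by ``Laplace's method''; this contradicts your own correct observation that Laplace fails for $K\neq 4_1$. What the paper actually does is split the sum over~$\br$ into congruence classes modulo $q=\tn{den}(\alpha)$ and boxes of side $k/d$ (equation~\eqref{splj}); after applying the reciprocity formula term-by-term, the restricted sum~$\J^*_K$ is estimated by Poisson summation followed by the complex-analytic saddle-point method (Lemma~\ref{main_terms}), genuinely \emph{adapting} the arguments of~\cite{Ohtsuki52,OhtsukiYokota6,Ohtsuki7} rather than merely citing the arithmeticity result as a black box: the restricted sum lives in a smaller domain and carries extra error terms in the exponent, and it is precisely the holomorphicity of those error terms — the content of Theorem~\ref{th:Ir} — that allows the saddle-point argument to go through. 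Your ``small enhancement to cover off-critical values'' does not describe this correctly.

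Two further points. Your explanation for excluding $7_2$ is wrong: the obstruction is not that the series representation fails to fit a template, but that Lemma~\ref{extra_terms} — the inequality showing that lattice points $\bs\lambda$ with $\ell_{i,j}(\bs\lambda)\notin[0,1)$ for some $i,j$ contribute a factor strictly smaller than $\exp(\tn{Vol}(K)/(2\pi) - \delta)$ for a fixed $\delta>0$, hence are exponentially negligible against the main term — is false as stated for $7_2$, as the paper remarks explicitly. Finally, the field-of-definition assertions ($D_{K,n}(\alpha)\in F_{K,\alpha}$ and $\Lambda_{K,\alpha}\in F_{K,\alpha}$) do not simply ``follow from the arithmeticity conjecture'': they require the separate Galois-theoretic argument of Lemma~\ref{conste}, which tracks how the congruence-class sums $C(\bs s)$ and saddle-point coefficients $\omega_{\bs s,n}$ transform under $\Gal({\tilde F}_{K,q}/F_{K,q})$ and shows that the relevant sums over $\bs s$ are Galois-invariant. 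Your proposal does not address this step at all.
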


\begin{remark}
By the works~\cite{OhtsukiTakata2015,Ohtsuki52,OhtsukiYokota6,Ohtsuki7}, the number $\pm 2i \delta_K^{-1}$ can be interpreted as the conjugate of a twisted Reidemeister torsion of~$K$. Our method gives the constant term $C_K\cdot D_{K,0}$ as an explicit product of algebraic numbers; in Remark~\ref{c45} in Section~\ref{pmt1} we give as examples its value in the cases of $K=4_1$ and $K=5_1$.
\end{remark}
\begin{remark}
Recently, Calegari, Garoufalidis and Zagier~\cite{CalegariGaroufalidisZagier} made a more precise conjecture on $C_K(\alpha)$, predicting it naturally factors as $\mu_{K,8c}\cdot \eps_K(\alpha)^{1/c}/\sqrt{\delta_K}$, where $c$ is the denominator of $\alpha$, $\mu_{K,8c}$ is a $8c$ root of unity, $\eps_K(\alpha)$ is a unit of $F_{K,\alpha}$ and $\delta_K\in F_{K}$. We do not at present have such a precise description of~$C_K(\alpha)$. This would presumably require a fine understanding of the congruence sums~\eqref{defcs}.
\end{remark}
By the work of Ohtsuki~\cite{Ohtsuki7}, the arithmeticity conjecture is known also for $K=7_2$ and we expect that our method would give Theorem~\ref{th:1} also in this case. However, the proof of this case is more involved, so we decided to exclude this case for simplicity. In any case, we want to stress again that the scope of our work is more general and suggests that any proof of the arithmeticity conjecture should be adaptable into a proof of the modularity conjecture via the use of the reciprocity relation~\eqref{eq:mro}.

\medskip

The modularity and the volume conjectures likely don't give the full picture on the symmetries of $\J_{K,0}$ nor on its values at roots of unity.
Indeed, in the case of $K=4_1$ we can show that Theorem~\ref{th:1} can be complemented by a second reciprocity formula relating $X=\frac hk$ with $X'=\frac {\overline h}k$, where the overline indicates the multiplicative inverse modulo the denominator. This new reciprocity formula involves
the ``cotangent sum'' (which appears also in the main term in the variation, effective for other ranges of the parameters, of the reciprocity formula given in Theorem~\ref{th:4} below)
\begin{equation*}
  \cz\pr{ h/k}:=-\sum_{m=1}^{k-1}\frac mk\cot\pr{\frac{\pi  mh}{k}},\qquad (h,k)=1,k\geq1,
\end{equation*}
which is itself a quantum modular form~\cite{BettinConrey2013}
and has been widely studied due to its connection to the  B\'aez-Duarte-Nyman-Beurling criterion for the Riemann hypothesis (see, for example,~\cite{Bagchi2006,Baez-Duarte2003,Baez-DuarteBalazardEtAl2005,Vasyunin1995}).
\begin{theo}\label{th:2}
  Let $1\leq h\leq k$ with $(h,k)=1$. Then
  \begin{equation}\label{eq:frf}
   \frac{\J_{4_1,0}\big(\e[{\overline{h}}/k]\big)}{\J_{4_1,0}\big(\e[{\overline{k}/}h]\big) }=\exp\bigg( \frac{\Vol(4_1)}{2\pi} \frac kh+\E( h,k)\bigg),
  \end{equation}
  where  
  \begin{align}\label{eq:ber}
    \E( h,k) \ll 
    \frac1k \max_{0\leq r' <h}\Big|\ssum{1\leq n\leq r'}\cot\Big(\pi\frac{n\overline k}h\Big)\frac{ n}{h}\Big|+\frac {1}{h}\Big|\cz\Big(\frac{\overline k}h\Big)\Big|
    +\log \frac kh+\frac{k}{h^{2}}.
  \end{align}
\end{theo}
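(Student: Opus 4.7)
The plan is to exploit the $q$-Pochhammer reciprocity~\eqref{eq:mro} term-by-term in the Kashaev-type expansion~\eqref{jones41}, taking advantage of the classical congruence $\bar h/k+\bar k/h\equiv 1/(hk)\pmod 1$. Concretely, one selects $\gamma=\left(\begin{smallmatrix}\bar h & m\\ k & h\end{smallmatrix}\right)\in\SL(2,\Z)$ with $m=(h\bar h-1)/k$, so that $\gamma(\infty)=\bar h/k$, and checks that for a representative $\alpha\equiv-\bar k/h\pmod 1$ of denominator $h$, the image $\gamma(\alpha)$ equals $\bar h/k$ modulo~$1$. Applying~\eqref{eq:mro} to each index $1\leq r<k$ then gives
$$(\e[\bar h/k])_r=\mu\,(\e[\bar k/h])_{r'(r)}\,\psi_\gamma(\alpha,r),$$
where $\mu$ is an $r$-independent unimodular factor and $r\mapsto r'(r)\in\{0,\ldots,h-1\}$ is the index map prescribed by the reciprocity. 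Taking absolute squares, $\mu$ drops out, and regrouping the sum over $r$ by the value of $r'$ yields
$$\J_{4_1,0}(\e[\bar h/k])=\sum_{r'=0}^{h-1}|(\e[\bar k/h])_{r'}|^2\,\Phi(r'),\qquad \Phi(r'):=\ssum{0\leq r<k\\ r'(r)=r'}|\psi_\gamma(\alpha,r)|^2.$$

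The crux is then to establish the asymptotic
$$\Phi(r')=\exp\bigg(\frac{\Vol(4_1)}{2\pi}\cdot\frac{k}{h}+O(\E(h,k))\bigg)$$
uniformly for $0\leq r'<h$. Granted this, factoring $\Phi(r')$ out of the sum identifies the remaining factor with $\J_{4_1,0}(\e[\bar k/h])$ up to a relative error $O(\E(h,k))$, which is \eqref{eq:frf}. For the asymptotic of $\Phi(r')$, the explicit form of $\psi_\gamma(\alpha,r)$ from Theorem~\ref{th:Ir} expresses $\log|\psi_\gamma(\alpha,r)|^2$ as a dilogarithm evaluated at an $h$-th root of unity plus lower-order corrections. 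Each fibre over $r'$ contains $\approx k/h$ values of $r$, nearly equally spaced with step $k/h$, and a Riemann-sum approximation of the dilogarithm contribution produces the main term $\Vol(4_1)\,k/(2\pi h)$ via the identity $\Vol(4_1)=2\,\Im\,\Li_2(\e^{i\pi/3})$.

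The error $\E(h,k)$ in~\eqref{eq:ber} arises from the subleading parts of $\log|\psi_\gamma|^2$: the factor $\log(k/h)$ absorbs the half-integral-weight square roots in $\psi_\gamma$, the term $k/h^2$ records the second-order Taylor correction to the dilogarithm at the stationary point of the Riemann sum, and the two cotangent contributions come from an Abel summation of $|\psi_\gamma(\alpha,r)|^2$ against the piecewise-constant index $r'(r)$. The partial sums $\sum_{1\leq n\leq r'}\cot(\pi n\bar k/h)\,n/h$ record boundary corrections at the right endpoint of each fibre, whereas $\cz(\bar k/h)$ accumulates the systematic bias across all $h$ fibres. The main obstacle is the uniformity in $r'$: unlike Theorem~\ref{th:1}, which relies on a single saddle-point analysis near a dominant index, here the cumulative behaviour of $\log|\psi_\gamma(\alpha,r)|^2$ over the entire range $0\leq r<k$ is required, and the cancellations between boundary and bulk contributions are precisely what produce the cotangent-sum form of~\eqref{eq:ber}.
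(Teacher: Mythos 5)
The high-level skeleton of your proposal -- split the Kashaev sum into congruence classes mod~$h$, apply a Pochhammer reciprocity to each term, use $|\cdot|^2$ to kill phases and exploit positivity, then extract the main term by a Laplace-type estimate around $\lambda = 5/6$ -- is indeed the shape of the paper's proof. But the specific reciprocity input you select is the wrong one, and this is a genuine gap.

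You invoke~\eqref{eq:mro}, i.e.\ Theorem~\ref{th:Ir}, with a matrix $\gamma$ chosen so that $\gamma(\infty) = \bar h/k$. In the statement of Theorem~\ref{th:Ir}, however, all error terms (in particular the bound~\eqref{bdfg} on $\Er_s(\lambda, d/k)$, via Lemma~\ref{lwb}) carry implicit constants that are allowed to depend on $\gamma$ and $d$; equivalently, on $q = \denom(\gamma(\infty))$. In your setup $q = k$, so the bound $\Er_s(\lambda,\kappa)\ll |\log|1-\lambda||+\log(1/\kappa)$ has unbounded implied constants as $k\to\infty$, and the formula carries no asymptotic content in the regime of Theorem~\ref{th:2}. (The main dilogarithm term $\frac{k}{qd}\Lie(\lambda)$ also degenerates when $q$ grows.) Theorem~\ref{th:Ir} is engineered for the modularity conjecture, where $\gamma$ is \emph{fixed} and only $N$ goes to infinity; it cannot be recycled for the $\bar h/k\leftrightarrow\bar k/h$ reciprocity where both $h$ and $k$ vary.

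The correct tool is the \emph{second} reciprocity formula, Theorem~\ref{thp}, which is stated precisely for the ratio $(\e[-\bar h/k])_r / (\e[\bar k/h])_{r_0}$ and whose error term is uniform in both $h$ and $k$. Its proof is structurally different from Theorem~\ref{th:Ir}: one factors $1-\e[y-x]$ as $\tfrac12(1+\e[-x])(1-\e[y])(1-\tan(\pi x)\cot(\pi y))$ using $\bar h/k + \bar k/h \equiv 1/(hk)\pmod 1$, then analyzes $\sum_{h\nmid n\leq r}\big(\log(1-\cot(\pi n\bar k/h)\tan(\pi n/(hk))) + \cot(\pi n\bar k/h)\tan(\pi n/(hk))\big)$ via Lemma~\ref{madc}, which is where the cotangent partial sums and $\cz(\bar k/h)$ in~\eqref{eq:ber} actually originate. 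Your heuristic attribution of these terms to ``Abel summation against the piecewise-constant index $r'(r)$'' is not what happens; the cotangents are not post-processing of the first reciprocity, they are intrinsic to the derivation of the second one. Finally, a small point: what produces the main term $\exp(\frac{\Vol(4_1)}{2\pi}\frac kh)$ is a Laplace/saddle-point estimate (the integrand $\exp(\frac kh\int_0^\lambda\log(4\sin^2\pi t)\,dt)$ is sharply peaked at $\lambda=5/6$, yielding the Gaussian factor $\sqrt{k/(h\sqrt3)}$), not a plain Riemann-sum of the dilogarithm over the fibre.
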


This may be compared with~\eqref{eq:J0-approxmod}. Note that~$\cs(4_1) = 0$, so there is no corresponding contribution on the right-hand side of~\eqref{eq:frf}. 

In this case as well, the reciprocity formula~\eqref{eq:frf} stems from a corresponding relation for the $q$-Pochhammer symbol (see Theorem~\ref{thp} below). Notice that, despite not giving a full asymptotic expansion,~\eqref{eq:frf} is completely uniform. In particular, it permits to be successfully iterated for ``typical'' roots of unity, %\footnote{One can also obtain an asymptotic formula for some ``atypical'' rational $h/k$, where the cotangent sum $\cz$ starts being the main term, cf. Theorem~\ref{th:4} below.} 
allowing us to deduce the following law of large numbers for $\log \J_{4_1,0}$.

\begin{theo}\label{theo:cfep}
For $\alpha\in\Q\cap(0,1)$ with continued fraction expansion   $[0;b_1,\dots,b_r]$,  $b_r>1$, let 
$$ r(\alpha):=r,\qquad \Sm(\alpha):=\sum_{\ell=1}^rb_\ell.$$
Then, as~$\Sigma(\alpha)/r(\alpha)\to\infty$, we have
  \begin{equation}\label{eq:cfef}\begin{aligned}
      \log \J_{4_1,0}(\e[\alpha])&\sim \frac{\Vol(4_1)}{2\pi} \Sm(\alpha).
    \end{aligned}\end{equation}
In particular, for almost all roots of unity $q$ of order $n\leq N$, one has
  \begin{equation}\label{eq:asy}
    \log \J_{4_1,0}(q) \sim  \frac{12 }{\pi^2}\frac{\Vol(4_1)}{2\pi}\log n \log \log n
  \end{equation}
  as $N\to\infty$.

\end{theo}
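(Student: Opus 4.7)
The plan is to prove~\eqref{eq:cfef} by iterating the reciprocity formula of Theorem~\ref{th:2} along a Euclidean-type descent associated with the continued fraction expansion of~$\alpha$, producing a telescoping identity whose main terms sum to $\Sigma(\alpha)$ and whose total error is negligible under the hypothesis $\Sigma(\alpha)/r(\alpha)\to\infty$. The second assertion~\eqref{eq:asy} then follows from standard distributional results for continued fractions of random rationals.

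Concretely, set $\alpha_0:=\alpha=h/k$ in lowest terms with $0<h<k$, and let $\tilde h_0:=\overline{h}\pmod{k}$ be the modular inverse of~$h$ modulo~$k$, so that $\overline{\tilde h_0}/k=h/k$. Applying Theorem~\ref{th:2} with $(h,k)$ replaced by $(\tilde h_0, k)$ yields
\begin{equation*}
\log \J_{4_1,0}(\e[\alpha_0]) = \log \J_{4_1,0}(\e[\alpha_1]) + \frac{\Vol(4_1)}{2\pi}\cdot\frac{k}{\tilde h_0} + \E(\tilde h_0, k),
\end{equation*}
with $\alpha_1:=\overline{k}/\tilde h_0\pmod 1$, a rational of denominator $\tilde h_0<k$. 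Iterating, one obtains a sequence $\alpha_0,\alpha_1,\ldots,\alpha_L=0$ along which the denominator strictly decreases; one has $L=O(r(\alpha))$, and telescoping gives
\begin{equation*}
\log\J_{4_1,0}(\e[\alpha]) = \frac{\Vol(4_1)}{2\pi}\sum_{\ell=0}^{L-1}\frac{k_\ell}{\tilde h_\ell} + \sum_{\ell=0}^{L-1}\E(\tilde h_\ell, k_\ell).
\end{equation*}

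To identify the main sum with~$\Sigma(\alpha)$, one checks by induction on~$k$ that $\sum_\ell\lfloor k_\ell/\tilde h_\ell\rfloor=\Sigma(\alpha)$, using the fact that the descent is a bijective re-encoding of the continued fraction algorithm on~$\alpha$; since the fractional parts contribute at most $L=O(r(\alpha))$, one obtains $\sum_\ell k_\ell/\tilde h_\ell=\Sigma(\alpha)(1+o(1))$ under the hypothesis. The hardest part of the argument will be controlling the cumulative error $\sum_\ell\E(\tilde h_\ell, k_\ell)$: from~\eqref{eq:ber}, each term is dominated by $\log(k_\ell/\tilde h_\ell)+k_\ell/\tilde h_\ell^2$ together with normalized cotangent sums $\abs{\cz(\overline{k_\ell}/\tilde h_\ell)}/\tilde h_\ell$ and maxima of incomplete cotangent sums. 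Using the quantum modularity of~$\cz$ from~\cite{BettinConrey2013} to bound these cotangent contributions in terms of the ambient continued fraction data, one should obtain a total error of size $o(\Sigma(\alpha))$. I anticipate the most delicate point to be handling those steps at which $\tilde h_\ell$ is much smaller than~$k_\ell$ (corresponding to a large partial quotient appearing in~$\alpha$), where the cotangent sums may become large; a careful separate treatment of the last few iterates of the descent should suffice.

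For~\eqref{eq:asy}, I would combine~\eqref{eq:cfef} with two classical statements about a random reduced fraction~$h/n$ with $n\leq N$: Porter's theorem, giving $r(h/n)\sim\tfrac{12\log 2}{\pi^2}\log n$ on average, and a trimmed-sum law for partial quotients (due in various forms to Heilbronn, Diamond--Vaaler, and Hensley) yielding $\Sigma(h/n)\sim\tfrac{12}{\pi^2}\log n\log\log n$ for almost all such fractions. For these typical~$h/n$ one has $\Sigma(\alpha)/r(\alpha)\sim\log\log n\to\infty$, so~\eqref{eq:cfef} applies; a final counting of exceptional pairs~$(h,n)$ then delivers~\eqref{eq:asy}.
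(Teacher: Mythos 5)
Your high-level plan---iterate the reciprocity formula of Theorem~\ref{th:2} along the Euclidean descent encoded by the continued fraction, then telescope---is indeed the paper's strategy, and your identification of the main sum with $\Sigma(\alpha)$ via the recursion for partial quotients is correct. However, there are two places where the proposal leaves a genuine gap that would need to be filled before this could be called a proof.

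First, your plan to iterate Theorem~\ref{th:2} all the way to $\alpha_L=0$ does not work: when $\tilde h_\ell$ is bounded (which happens at the last few iterates), the error term in~\eqref{eq:ber} contains $k_\ell/\tilde h_\ell^2$, which is of the same order as the main term $k_\ell/\tilde h_\ell$, so the telescoping identity loses the asymptotic at precisely those steps. You flag this as ``the most delicate point'' and say a ``careful separate treatment should suffice,'' but no such treatment is proposed, and in fact no version of Theorem~\ref{th:2} alone can fix it. The paper's solution is structurally different: it stops the descent at the first index $s$ with $v_s\geq 1/\eps$ (so $s=O_\eps(1)$), rewrites $\overline{v_{s-1}}/v_s$ as $\gamma(b_s)$ for some fixed $\gamma\in\SL(2,\Z)$ with entries $O_\eps(1)$, and then invokes Theorem~\ref{th:1} (the modularity theorem, with its $o(1)$ error) for this final block. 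That switch from Theorem~\ref{th:2} to Theorem~\ref{th:1} is an essential ingredient that your outline omits.

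Second, your control of the cumulative cotangent errors is not correct as stated. You propose to use the quantum modularity of $\cz$ from~\cite{BettinConrey2013}, but that result gives a reciprocity relation for $\cz$, not the kind of pointwise bound in terms of continued-fraction data that is required here. What the paper actually uses is Lemma~\ref{parcot} (from~\cite{BettinDrappeaub}), which bounds the incomplete cotangent sums by $\sum_m v_m\log(v_{m+1}/v_m)+k$, followed by a nontrivial double-sum estimate (exchanging the order of summation and using the exponential decay $v_{\ell-1-n}\ll v_{\ell-1}2^{-n/2}$) to show the total contribution is $O\big(\sum_m\log(v_{m+1}/v_m)\big)+O(r)$. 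Without a quantitative bound of this shape, your claim that the error is $o(\Sigma(\alpha))$ is unjustified. Your derivation of~\eqref{eq:asy} from~\eqref{eq:cfef} via Heilbronn/Diamond--Vaaler-type results is reasonable in spirit, though the paper simply cites~\cite[Corollary~1.4]{BettinDrappeau}.
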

Equation~\eqref{eq:asy} can be seen as a version of the volume conjecture~\eqref{eq:vc} for typical roots of unity. Indeed, the volume conjecture provides the asymptotic behavior of $\log \J_{4_1,0}$ at the root of unity $\e[1/N]$, whereas our result gives the asymptotic for almost all roots of unity of denominator $\leq N$. Notice then in both cases the leading constant involves the hyperbolic volume $\Vol(4_1)$, but the size of $\J_{4_1,0}$ changes dramatically.

Equation~\eqref{eq:cfef} is stronger than~\eqref{eq:asy}, which will be readily deduced by~\cite{BettinDrappeau}, and gives an asymptotic formula in most cases, e.g. when $\alpha$ is restricted to rational numbers with bounded $r(\alpha)$ as the denominator of $\alpha$ goes to infinity. In particular, it generalises the volume conjecture, which corresponds to the case $\alpha=1/N=[0;N]$. It is very likely that the assumption $\Sm(\alpha)/r(\alpha)\to\infty$ cannot be removed in general. Indeed, if for example $\alpha_n=F_{n-1}/F_{n}$ with $F_n$ the $n$-th Fibonacci number so that $\Sm(\alpha_n)=r(\alpha_n)+1=n-1$, then Theorem~\ref{eq:cfef} would give $\log \J_{4_1,0}(\e[\alpha_n])\sim C n$, with $C=\frac{\Vol(4_1)}{2\pi}\approx 0.323\dots$, whereas numerically it appears that $F(\alpha_n)$ grows like $C' n$, for $C'\approx 1.1$ (cf. also~\cite[Figure~6]{Zagier2010}).

\medskip 
Our proof of Theorems~\ref{theo:cfep} depends crucially on the positivity of the summands in~\eqref{jones41} which is missing if $K\neq 4_1$. Nonetheless, we expect a similar result holds for all hyperbolic knots. Also, since $\Sm(\alpha)$ is distributed according to a stable law~\cite{BettinDrappeau}, we expect the same to hold for $\log |\J_{K,0}(\e[\alpha])|$ for any hyperbolic knot $K$.

\begin{conj}\label{conj:zagier2}
  Let $K$ be a hyperbolic knot.
  There exists a constant $D_{K}\in\R$ such that for any interval $[a,b]\subset\R$ there holds
  \begin{equation}\label{eq:conj}
    \begin{split}
      &|Q_N|^{-1}\Big|\Big\{q\in Q_N\  \Big | \bigg(\frac{ \log |\J_{K,0}(q)|}{\frac{\tn{Vol}(K)}{2\pi} \log N}-\frac{12 }{\pi^2 } \log \log N -D_{K}\bigg)\, \in [a,b]\Big\}\Big|\\
      & \qquad = \int_a^b f_1(x; \tfrac6\pi, 1, 0) \df x + o(1)
    \end{split}
  \end{equation}
  as $N\to\infty$, where $Q_N$ is the set of roots of unity of order $\leq N$, and~$c\mapsto f_1(c; \tfrac6\pi, 1, 0)$ is the density of the stable law~$S_1(\tfrac6\pi,1,0)$. In particular,
  $$
  \log |\J_{K,0}(q)|\sim \frac{12 }{\pi^2 }\frac{\tn{Vol}(K)}{2\pi} \log n \log \log n
  $$
  for almost all roots of unity $q$ of order $n\leq N$ as $N\to\infty$.
\end{conj}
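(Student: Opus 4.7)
The plan is to mimic the proof of Theorem~\ref{theo:cfep} for general hyperbolic $K$, using the modularity information from Conjecture~\ref{conj:zagier} (available for most small knots by Theorem~\ref{th:1}) in place of the explicit reciprocity~\eqref{eq:frf} specific to $4_1$, and then exploiting the fact that $\Sm(\alpha)$ is governed by a stable law for random $\alpha$ of bounded denominator~\cite{BettinDrappeau}.

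\textbf{Step 1: iterated modularity along the CFE.} Given $\alpha\in\Q\cap(0,1)$ with continued fraction expansion $[0;b_1,\ldots,b_r]$, I would iterate the approximate modularity~\eqref{eq:J0-approxmod} along the Euclidean algorithm. At the $\ell$-th step one applies $\gamma_\ell=\spmat{0 & -1 \\ 1 & b_\ell}$ (up to signs) so that the orbit visits the successive tails $\alpha_\ell$ of the CFE. Taking absolute values in~\eqref{eq:J0-approxmod} and using that $|\e^{i(\Vol(K)-i\cs(K))/\hbar}|=\exp(\tfrac{\Vol(K)}{2\pi}(x-\gamma^{-1}(\infty)))$ for real $x$, each step contributes $\tfrac{\Vol(K)}{2\pi}\,b_\ell$ to $\log|\J_{K,0}(\e[\alpha_{\ell-1}])|-\log|\J_{K,0}(\e[\alpha_\ell])|$, up to $O(\log b_\ell)$ from the prefactor $(2\pi/\hbar)^{3/2}$ and from $\log|C_K(\alpha_\ell)|$. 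Telescoping over $\ell=1,\ldots,r$ yields
\begin{equation*}
\log|\J_{K,0}(\e[\alpha])|=\tfrac{\Vol(K)}{2\pi}\Sm(\alpha)+E_K(\alpha),
\end{equation*}
with an error $E_K(\alpha)$ controlled by $r(\alpha)$ and $\sum_\ell\log b_\ell$.

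\textbf{Step 2: applying the stable limit theorem for $\Sm$.} By~\cite{BettinDrappeau}, for $q=\e[h/k]$ drawn uniformly in $Q_N$, the rescaled quantity $(\Sm(h/k)-\tfrac{12}{\pi^2}\log N\log\log N)/\log N$ converges in distribution to the stable law $S_1(\tfrac6\pi,1,0)$. Inserting the asymptotic from Step~1 and normalizing by $\tfrac{\Vol(K)}{2\pi}\log N$ then produces exactly the centering and scaling appearing in~\eqref{eq:conj}; the constant $D_K$ absorbs the $Q_N$-average of $E_K(\alpha)$ together with the rational contribution of the Dedekind sum entering~\eqref{eq:expr-CK}. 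The in-particular statement follows from the standard fact that the median of $\Sm$ on $Q_N$ is of size $\tfrac{12}{\pi^2}\log N\log\log N$.

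\textbf{Main obstacle.} The hard part is making Step~1 rigorous. Conjecture~\ref{conj:zagier} provides an asymptotic as $x\to\infty$ only for \emph{bounded} denominator of $x$, whereas iterating along the CFE forces us to work with denominators that grow through the successive convergents, up to $k$. One therefore needs a uniform-in-denominator reciprocity formula for $\J_{K,0}$ in the spirit of Theorem~\ref{th:2}, built from the Pochhammer reciprocity~\eqref{eq:mro} with an error that is polynomial in the depth of iteration. Equally delicate is the absence of positivity for $K\neq 4_1$: the proof of Theorem~\ref{theo:cfep} crucially used that every summand in~\eqref{jones41} is nonnegative, whereas for general $K$ the cancellations in $\J_{K,0}(q)$ are exponentially severe, so that the extraction of the leading term $\tfrac{\Vol(K)}{2\pi}\Sm(\alpha)$ must be done symbolically via~\eqref{eq:mro} without ever bounding the full sum term-by-term. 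Granted such a uniform reciprocity, the passage from the stable limit for $\Sm$ to that for $\log|\J_{K,0}|$ is a routine Slutsky-type argument.
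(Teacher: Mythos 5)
The statement you are trying to prove is labeled as a \emph{conjecture} in the paper; the authors do not claim to prove it unconditionally, and neither do you. What the paper does instead is record a conditional implication, Theorem~\ref{th:pz}: assuming that $\gh_K(h/k)=\log|\J_{K,0}(\e[h/k])|-\log|\J_{K,0}(\e[k/h])|$ has one-sided limits at irrationals and that $\gh_K(h/k)-\frac{\Vol(K)}{2\pi}\frac kh-\frac32\log(k/h)$ is uniformly bounded, Conjecture~\ref{conj:zagier2} follows (with an explicit $D_K$). So the ``proof'' you should be comparing against is that of Theorem~\ref{th:pz}.

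Your Step~1 and Step~2 do capture the right skeleton, and your Step~2 is essentially identical to the paper's: telescope along the partial quotients to obtain a Birkhoff-type sum, then invoke the stable limit theorem for $\Sm(\alpha)$ (and the relevant perturbation theorems) from~\cite{BettinDrappeau}. Where your route genuinely diverges is in what is assumed at the start. You try to derive the needed uniform control directly from Conjecture~\ref{conj:zagier}, and you correctly flag that this cannot work: the implied constant in~\eqref{eq:J0-approxmod} is allowed to depend on the denominator of $x$, so iterating it down the continued fraction expansion (where the denominators of the iterates grow up to $k$) gives no control over the accumulated error. This is exactly the gap the paper circumvents by \emph{postulating} the regularity of $\gh_K$ directly: the two hypotheses of Theorem~\ref{th:pz} are chosen precisely so that $\psi(x)=\gh_K(x)-\frac{\Vol(K)}{2\pi}\frac1x-\frac32\log\frac1x$ becomes bounded and continuous a.e., hence Riemann-integrable, hence approximable by smooth observables to which the quasi-compactness framework of~\cite{BettinDrappeau} applies. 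Your diagnosis of the second obstacle (loss of positivity for $K\neq 4_1$, which is why Theorem~\ref{th:2} and hence Theorem~\ref{theo:cfep} are $4_1$-specific) is also accurate, and is indeed why the paper does not attempt the $4_1$-style argument in general.

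A few smaller issues. Your telescoping sketch ignores the Chern--Simons term in the exponent of~\eqref{eq:J0-approxmod}: upon taking absolute values one picks up $\exp(\frac{\Vol(K)}{2\pi}\Re(\frac1\hbar))$, and since $\hbar=2\pi i/(x-\gamma^{-1}(\infty))$ with $x$ real, this does give the advertised $\frac{\Vol(K)}{2\pi}(x-\gamma^{-1}(\infty))$ — so that part is fine, but you should say so explicitly rather than quoting the formula without the $-i\cs(K)$ piece. Your claim that $D_K$ ``absorbs the $Q_N$-average of $E_K(\alpha)$'' is vague; in the paper, $D_K$ is computed as the integral $\int_0^1\frac{\gh_K(1/t)-\frac{\Vol(K)}{2\pi t}}{1+t}\,\df t$ against the Gauss measure, which is the natural object when one works with the transfer operator — it is not an average over $Q_N$. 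Finally, the ``in particular'' assertion is not a statement about medians: it follows because the stable law statement implies that the deviation of $\log|\J_{K,0}|/(\frac{\Vol(K)}{2\pi}\log N)$ from $\frac{12}{\pi^2}\log\log N$ is $O_\P(1)$, hence $o(\log\log N)$ in probability.

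In summary: you correctly identify the structure and the two obstructions, but a self-contained proof is not to be expected here since the statement is a conjecture; the paper's actual contribution for this statement is the reduction in Theorem~\ref{th:pz}, where the missing uniformity is \emph{assumed} as a regularity property of $\gh_K$ rather than derived from the modularity Conjecture~\ref{conj:zagier}.
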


In~\cite{Zagier2010} Zagier discusses the continuity with respect to the real topology of $$\gh_{4_1}(h/k):=\log |\J_{4_1,0}(\e[h/k])|-\log |\J_{4_1,0}(\e[k/h])|$$ 
and suggests that $\gh_{4_1}$ is discontinuous but $\mathcal C^{\infty}$ from the right and the left at non-zero {rationals}\footnote{This continuity property at rationals follows from the modularity conjecture but only when approaching a rational $h/k=[0;b_1,\dots,b_r]$ with fractions essentially of the form $[0;b_1,\dots,b_r,N]$ with $N\to\infty$; and not, for example, with $[0;b_1,\dots,b_r,N_1,N_2]$ with both $N_1,N_2\in\N$ going to infinity.}
and continuous but not differentiable as one approaches irrational numbers. Using Lebesgue's integrability condition and~\cite{BettinDrappeau}, 
one can easily show that this continuity condition together with a suitable continuity condition at zero implies Conjecture~\ref{conj:zagier2}.

\begin{theo}\label{th:pz}
  Let $K$ be a hyperbolic knot. Assume the following:
  \begin{itemize}
  \item $\gh_{K}(h/k):=\log |\J_{K,0}(\e[h/k])|-\log |\J_{K,0}(\e[k/h])|$ has a limit as~$h/k$ tends to any positive irrational number, 
  \item $\gh_{K}(h/k)-\frac{\tn{Vol} (K)}{2\pi}\frac{k}h-\frac{3}2\log (k/h)$ is uniformly bounded.
  \end{itemize}
  Then Conjecture~\ref{conj:zagier2} holds with~$D_K = \frac{1-\gamma_0-\log 2}{\pi^2/12} + \frac{24}{\pi\tn{Vol}(K)}\int_0^1 \frac{\gh_{K}(1/t) - \frac{\tn{Vol}(K)}{2\pi t}}{1+t}\df t$, the function $\gh_{K}$ being extended to $\R_{>0}$ by taking limits over the rationals.
\end{theo}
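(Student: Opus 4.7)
The plan is to reduce the claim to the stable-law distribution result of~\cite{BettinDrappeau} for Birkhoff sums over the Gauss map $T:x\mapsto\{1/x\}$. For any rational $\alpha\in(0,1)$ with continued fraction $[0;b_1,\dots,b_r]$, the identity $\e[1/\alpha]=\e[T\alpha]$ makes the definition of $\gh_K$ telescope along the $T$-orbit of $\alpha$, giving
\[
  \log|\J_{K,0}(\e[\alpha])|=\sum_{\ell=0}^{r-1}\gh_K(T^\ell\alpha),
\]
since $T^r\alpha=0$ and $\J_{K,0}(1)=J_{K,1}(1)=1$ absorb the boundary. Thus $\log|\J_{K,0}(\e[\alpha])|$ realises itself as a single Birkhoff sum of the observable $\gh_K$.

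Assumption~2 provides the decomposition $\gh_K(t)=\tfrac{\Vol(K)}{2\pi t}+\tfrac{3}{2}\log\tfrac{1}{t}+\psi_K(t)$ with $\psi_K$ uniformly bounded; by Assumption~1, $\psi_K$ extends continuously to every irrational of $(0,1)$, hence is Riemann integrable (by Lebesgue's criterion) against the Gauss measure $d\mu(t)=\frac{dt}{(1+t)\log 2}$. Writing $1/T^\ell\alpha=b_{\ell+1}+T^{\ell+1}\alpha$, the leading piece of the Birkhoff sum becomes $\tfrac{\Vol(K)}{2\pi}\Sigma(\alpha)$ plus a bounded Birkhoff sum of $t\mapsto t$, while the remaining two pieces are Birkhoff sums of the $\mu$-integrable observables $\log(1/t)$ and $\psi_K$.

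This is exactly the setting of~\cite{BettinDrappeau}: for $\alpha$ sampled uniformly in $Q_N$, the sum $\Sigma(\alpha)$ converges to the stable law $S_1(6/\pi,1,0)$ after centering by $\tfrac{12}{\pi^2}\log N\log\log N$ and scaling by $\log N$, while any further Birkhoff sum of a $\mu$-integrable observable $f$ concentrates, upon division by $r(\alpha)\sim\tfrac{12\log 2}{\pi^2}\log N$, at its Gauss average $\int f\,d\mu$. After the $(\Vol(K)/2\pi)\log N$ normalisation demanded by Conjecture~\ref{conj:zagier2}, only the $\Sigma$-term keeps a genuine random limit; the Birkhoff averages of $t$, $\log(1/t)$ and $\psi_K$ collapse into the deterministic shift $D_K$. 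The universal piece $(1-\gamma_0-\log 2)/(\pi^2/12)$ gathers the asymptotic constant of $\Sigma(\alpha)$ from~\cite{BettinDrappeau} together with the classical Gauss integrals of $t$ and $\log(1/t)$, while the change of variable $t\mapsto 1/t$ in $\int_0^1\psi_K\,d\mu$ produces the residual $K$-dependent integral by expressing $\psi_K(t)$ in terms of $\gh_K(1/t)-\Vol(K)/(2\pi t)$.

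The main technical obstacle is to show that the Birkhoff sums of $\psi_K$ and of $\log(1/t)$ concentrate at their Gauss-measure means uniformly over $\alpha\in Q_N$, without perturbing the stable fluctuations of $\Sigma(\alpha)$. This hinges on the uniform boundedness in Assumption~2 (which keeps $\psi_K$ within the class of observables handled by~\cite{BettinDrappeau}) and on Assumption~1 (providing the continuity needed for Riemann-integrability, and hence for quantitative ergodic averaging on the Farey-sampled probability space).
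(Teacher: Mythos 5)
Your proposal follows essentially the same route as the paper: telescope the definition of $\gh_K$ along the Gauss-map orbit to realise $\log|\J_{K,0}(\e[\alpha])|$ as a Birkhoff sum, split $\gh_K$ into the $\Sigma(\alpha)$-main term plus Birkhoff sums of $\mu$-integrable observables (Riemann integrability of $\psi_K$ coming from Lebesgue's criterion applied to the bounded, a.e.-continuous error), then invoke the stable-law and ergodic-averaging theorems of~\cite{BettinDrappeau}. The paper's version is a bit more explicit about mollifying $\psi$ by a differentiable $\psi_\eps$ with $\|\psi-\psi_\eps\|_\infty\leq\eps$ before applying~\cite{BettinDrappeau} (whose hypotheses require more regularity than bare Riemann integrability) and about the final change of variables, but your skeleton is the same.
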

\begin{remark}
  One could replace the second assumption in Theorem~\ref{th:pz} with the assumption that  $\gh_{K}(h/k)-\frac{\tn{Vol} (K)}{2\pi}\frac{k}h-\frac{3}2\log (k/h)$ is left and right continuous as $h/k$ approaches any rational number.
\end{remark}

In the case of torus knots, the invariant~$\J_{K,0}$ can still be constructed and a formula of type~\eqref{eq:J0-approxmod} is expected to hold with~$\tn{Vol}(K)$ replaced by~$0$.
In this situation, the works~\cite{BaladiVallee2005,BettinDrappeau} would suggest that $\frac{\log |\J_{K,0}(q)|}{\sqrt{\log n}}$ becomes distributed according to a Gaussian law. In this case however, the conditions of Theorem~\ref{th:pz} are not sufficient to conclude.

\medskip

In view of Theorem~\ref{th:2}, it is natural to wonder if also the function $\gh_{K}^*(h/k):=\log |\J_{K,0}(\e[\overline h/k])|-\log |\J_{4_1,0}(\e[\overline k/h])|$ could be regular at irrational points. We can answer this question in the negative in the case of $K=4_1$.

\begin{theo}\label{concor}
  For all $x\in[0,1]$ we have $\limsup_{y\to x^\pm,\, y\in\Q} |\gh^\ast_{4_1}(y)|=+\infty$.
\end{theo}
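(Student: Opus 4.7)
The plan is, for each $x\in[0,1]$ and each admissible side of approach (right for $x=0$, left for $x=1$, both sides for $x\in(0,1)$), to exhibit a sequence of rationals $y_n=h_n/k_n\to x$ from that side with $|\gh^*_{4_1}(y_n)|\to+\infty$.

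The boundary case $x=0$ from the right is immediate: taking $y_n=1/n$, we have $\overline{h_n}/k_n=1/n$ and $\overline{k_n}/h_n=0$, so $\gh^*_{4_1}(1/n)=\log\J_{4_1,0}(\e[1/n])$, which is asymptotic to $\frac{\Vol(4_1)}{2\pi}n\to+\infty$ by the volume conjecture for $4_1$ (proved in~\cite{AndersenHansen2006}). For every other $x\in(0,1]$, the main term $\frac{\Vol(4_1)}{2\pi}\frac{k}{h}$ in Theorem~\ref{th:2} remains bounded as $h/k\to x$, and natural choices such as $y_n=(n-1)/n$ for $x=1$ yield bounded $|\gh^*_{4_1}|$ (a short asymptotic computation using the modularity/volume expansion for $\J_{4_1,0}(\e[1/m])$ gives $\gh^*_{4_1}((n-1)/n)\to \Vol(4_1)/(2\pi)$); hence the unboundedness must be extracted from the error term $\Re\E(h,k)$.

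For these remaining cases I would use the complementary reciprocity formula (Theorem~\ref{th:4} referenced in the excerpt), whose main term is the cotangent sum $\cz(\overline k/h)$ and which is effective precisely in the regime $h\asymp k$. Writing $x=[0;c_1,c_2,\ldots]$, I construct sequences $y_n=[0;c_1,\ldots,c_n,d_1,\ldots,d_m]$ with tail digits $d_i$ chosen adversarially. By the continued-fraction reversal identities---$\overline h/k=q_{s-1}/q_s=[0;a_s,\ldots,a_1]$ and $\overline k/h=p_{s-1}/p_s=[0;a_s,\ldots,a_2]$ with $s=n+m$---the argument $\overline{k_n}/h_n=[0;d_m,\ldots,d_1,c_n,\ldots,c_2]$ of the cotangent sum can be prescribed essentially freely through the $d_i$'s, up to the fixed tail coming from $x$. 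I would choose the $d_i$ so that $\overline{k_n}/h_n$ lies in a ``bad set'' for $\cz$: by the quantum-modular oscillation of $\cz$ established in~\cite{BettinConrey2013}, one can find rationals $p/q$ arbitrarily close to any prescribed point with $|\cz(p/q)|/q$ arbitrarily large. Appending these digits perturbs $y_n$ by $O(1/q_n^2)$, so $y_n\to x$ is preserved; the parity of $s$ selects the side of approach.

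The main obstacle is to verify that the cotangent-sum contribution to $\Re\E(h_n,k_n)$ is genuinely of the predicted size $|\cz(\overline{k_n}/h_n)|/h_n$ and is not cancelled by the other subleading terms appearing in~\eqref{eq:ber}; this requires a precise \emph{lower-bound} analysis of the error in the reciprocity (not merely the upper bounds stated in Theorem~\ref{th:2}), and constitutes the technical core of the proof.
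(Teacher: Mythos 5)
Your strategy matches the paper's: exploit the second reciprocity formula of Theorem~\ref{th:4} (where the cotangent sum $\cz(\overline k/h)$ appears as a main term, not buried in the error of Theorem~\ref{th:2}), perturb the continued fraction of $x$ by appending adversarial tail digits, and drive $|\gh^\ast_{4_1}|$ to infinity via the size of $\cz$. The observation that the main term $\frac{\Vol(4_1)}{2\pi}\frac kh$ is bounded for $x\neq 0$, forcing the divergence to come from the cotangent-sum term, is also the right one, and the parity-selects-the-side remark is correct. However, you have explicitly left unproven what you yourself call the ``technical core'': a lower bound showing the cotangent-sum contribution dominates the remaining errors in Theorem~\ref{th:4}. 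This is a genuine gap, not a routine verification, and the appeal to~\cite{BettinConrey2013} (``one can find rationals $p/q$ arbitrarily close to any prescribed point with $|\cz(p/q)|/q$ arbitrarily large'') is too soft to close it: you need a \emph{quantitative} control of $\cz(\overline k/h)$ in terms of the partial quotients of $k/h$, together with an estimate of the partial cotangent sums in the error term.

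Concretely, the paper's proof supplies three ingredients that your sketch does not. First, an explicit asymptotic formula from~\cite{Bettin2015} expresses $\frac1h\cz(\overline k/h)$ as an alternating weighted sum $\frac1\pi\sum_\ell \frac{(-1)^\ell\log(v_{\ell-1}/v_\ell)}{v_{\ell-1}}+O(n)$ over the partial quotients $v_\ell$ of $k/h$; this is the precise tool replacing your vague ``bad set'' appeal. Second, a specific choice of tail: the paper takes $h/k=[0;b_1,\dots,b_{2n},X,Y]$ and works in the regime $YnB^{2n}=o(\log X)$ (with $B$ controlling the head digits), so that the $\ell=2n$ term $-\frac{\log X}{v_{2n-1}}$ dominates both the $\ell=2n+1$ term and the $O(n\log B)$ error. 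Third, the error term in Theorem~\ref{th:4} involving the partial sums $\max_{r'}\big|\sum_{n\leq r'}\cot(\pi n\overline k/h)\frac n{hk}\big|$ is controlled by the separate estimate of~\cite[Theorem~1]{BettinDrappeaub} (Lemma~\ref{parcot} in the present paper), which bounds such partial cotangent sums by $\sum_m v_m\log(v_{m+1}/v_m)+k$; you need this to confirm the $O(k/h+\dots)$ error is swallowed. Without supplying these three pieces, the construction of sequences $y_n$ with $|\gh^\ast_{4_1}(y_n)|\to\infty$ remains a plan rather than a proof. As a minor point, your continued-fraction reversal identities for $\overline h/k$ and $\overline k/h$ are stated imprecisely (sign issues and the correct relation is $\{k/h\}=[0;b_2,\dots]$, not a reversal), though this is easily repaired and is not the obstruction.
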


\begin{figure}[h]
  \centering
  \begin{tabular}[h]{cc}
    \image{0.4}{plot-g-full} & \image{0.4}{plot-gstar-full}
  \end{tabular}
  \caption{Global graph of~$\gh_{4_1}$ and~$\gh^\ast_{4_1}$}
  \label{fig:graph-global}
\end{figure}

\begin{figure}[h]
  \centering
  \begin{tabular}[h]{cc}
    \image{0.4}{plot-g-zoom} & \image{0.4}{plot-gstar-zoom}
  \end{tabular}  
  \caption{Graph of~$\gh_{4_1}$ and~$\gh^\ast_{4_1}$ around~$x=0.485$}
  \label{fig:graph-globalast}
\end{figure}

\subsection{Outline of the paper and sketch of the proofs}
Theorem~\ref{th:1} and Theorem~\ref{th:2} are both based on two new relations for the $q$-Pochhammer symbol, given in Theorems~\ref{th:Ir} (cf.~\eqref{eq:mro}) and~\ref{thp}.  These relations are proved in Section~\ref{repsec} and both make use of the Abel-Plana summation formula \cite[p.23]{Abel1992}, \cite[p.408]{Plana1820}, \cite[Chapter~8.3.1]{Olver1997}, which is a form of Euler-Maclaurin summation with an explicit form of the error term.
In the case of Theorem~\ref{th:Ir}, one starts by dividing the product in the definition~\eqref{eq:def-Pochhammer} of $(\e[\gamma \alpha])_{r}$ into appropriate intervals and congruence classes. One then take the logarithm and apply the summation formula to the resulting sum of the function~$\log(1-\e(z))$. As this function is close to a primitive of~$\pi\cot(\pi z)$, which has poles at integers, then through a residue computation %(and paying particular attention to the branch cuts) 
one eventually arrives to the dual object~$(\e[ \alpha])_{r'}$. 
In the case of Theorem~\ref{thp}, the reciprocity relation for the $q$-Pochhammer symbol relates $\frac{\overline h}{k}$ and $\frac{\overline k}{h}$. In this case one starts by applying the simple relation $\frac{\overline h}{k}\equiv -\frac{\overline k}{h}+\frac1{hk}\mod 1$. After some initial manipulations, one is lead to consider sums of the function $\log(1-\cot(\pi \xi)\tan(\pi \xi z))+\cot(\pi \xi)\tan(\pi \xi z)$ for some choices of $\xi\in(0,1/2)$. This is again performed using Abel-Plana summation formula followed by a careful analysis, with particular care needed in the reassembling of various main terms.
 
In both our reciprocity relations for the $q$-Pochhammer symbol, we show that the error terms extend to holomorphic functions of controlled growth. This is crucial for the applications to the modularity relations for the Kashaev invariant for knots other than $4_1$. 

\medskip

Once the reciprocity formulas for the $q$-Pochhammer symbol are established the proofs of Theorem~\ref{th:1} and Theorem~\ref{th:2}, given in Section~\ref{pmt1} and~\ref{pmt2} respectively, follow in similar ways. We first split the sums in the definition of the Kashaev invariant into congruence classes (and suitable intervals) and apply the reciprocity relations, reducing the problem to that of estimating certain sums of exponentials of linear combinations of dilogarithms. These sums are very similar to the ones one needs to consider for the the volume conjecture with only two relevant differences: the variables of summation range over some convex space rather than some larger cubic regions, and inside the exponential we have also some new error terms. For all the knots we consider the first difference is easily treated since, as shown in Lemma~\ref{extra_terms}, the neglected terms are much smaller than the main terms (for other knots, such as $7_2$, a treatment as in~\cite[Section 8]{Ohtsuki7} should be possible). In the case of Theorem~\ref{th:1}, the second difference is also surpassed thanks the holomorphicity of the error terms mentioned above, since the complex analytic methods of~\cite{Ohtsuki52,OhtsukiYokota6,Ohtsuki7}, using Poisson summation and the saddle-point method, go through essentially unchanged. In the case of Theorem~\ref{th:2}, while we still have the holomorphicity of the error terms, the fact that the errors are not $o(1)$ forces us to use positivity to avoid possible cancellations in the main terms, thus restricting the applicability to the $4_1$ knot only.

\medskip

Theorems~\ref{theo:cfep}, \ref{th:pz} and~\ref{concor} are proved in Section~\ref{fise} and all use the reciprocity formulas~\eqref{eq:J0-approxmod}  and~\eqref{eq:frf} (the latter being more crucial) in conjunction with the recent work~\cite{BettinDrappeau} on the distribution of $\Sm(\alpha)$.
The difference between the reciprocity relations~\eqref{eq:J0-approxmod}  and~\eqref{eq:frf} can be better understood in terms of the continued fraction expansions $[0;b_1,\dots,b_m]$ of $h/k$ (for simplicity we assume $m$ odd). Indeed,~\eqref{eq:J0-approxmod} relates the values of $\J_{4_1,0}$ at $\e[{[0;b_1,\dots,b_m]}]$ and at $\e[{[0;b_\ell,\dots,b_m]}]$ provided that $b_\ell\to\infty$ for some $\ell\in\{1,\dots,m\}$ with all the other $b_i$ bounded, whereas~\eqref{eq:frf} relates the values of $\J_{4_1,0}$ at $\e[{[0;b_1,\dots,b_m]}]$ and at $\e[{[0;b_1,\dots,b_{m-1}]}]$ provided that $b_m\to\infty$ and that the other $b_i$ are not too large (for example $\log(b_i)=o(b_m)$ for all $i<m$ would suffice). 
Because of its uniformity,~\eqref{eq:frf} can be successfully iterated removing each time the last convergent $b_m$ from $\J_{4_1,0}(\e[{[0;b_1,\dots,b_m]}])$. We keep doing so untill we reach the last step for which we need to apply~\eqref{eq:J0-approxmod}. In this process, we pick up a main term of $\frac{\Vol(4_1)}{2\pi}b_m$ at each step and thus arrive to~\eqref{eq:cfef}. Equation~\eqref{eq:asy} then follows by the law of large numbers for $\Sm(\alpha)$ established in~\cite{BettinDrappeau}. 

Theorem~\ref{th:pz} follows a similar line, with the difference that in this case the previous argument and Conjecture~\ref{conj:zagier2} give that $\log \J_{4_1,0}(\e[\alpha])- \frac{\Vol(4_1)}{2\pi}\Sm(\alpha)$ can be well approximated by a differentiable function. The theorem then follows invoking again~\cite{BettinDrappeau}.

Finally, Theorem~\ref{concor} follows via a simple argument from Theorem~\ref{th:4}, a version of Theorem~\ref{th:2} which becomes useful when there is a middle partial quotient which  is extremely large.

\section*{Acknowledgment}
This paper was partially written during a visit of of S. Bettin at the Aix-Marseille University, a visit of S. Drappeau at the University of Genova, and a visit of both authors at ICTP Trieste. The authors thank these Institution for the hospitality and Aix-Marseille University, INdAM and ICTP for the financial support for these visits.

The authors wish to thank Don Zagier for useful discussions, Brian Conrey for putting us in contact with him, and Hitoshi Murakami for many helpful comments on this work.

S. Bettin is member of the INdAM group GNAMPA and his work is partially supported by PRIN 2017 ``Geometric, algebraic and analytic methods in arithmetic''.

\section*{Notation}
Given $D\subset \R$ and $f:D\to\R$, we write $\|f\|_{\infty, D}:=\sup_{t\in D}|f(t)|$. Also, given $t\in\R$, we write
$\|t\|:=\tn{dist}(t,\Z)$ and $\{t\}:=t-\lfloor t\rfloor$, where $\lfloor t\rfloor$ is the integer part of $t$. Given a property $P$, we define $\1_{P}$ (or $\1(P)$) to be $1$ if the property $P$ is satisfied and $0$ otherwise. 

All the implicit constants of the error terms are understood to be uniform in the various parameters unless otherwise indicated.

\section{Two reciprocity formulae for the $q$-Pochhammer symbol}\label{repsec}

\subsection{Abel-Plana's summation formula}

Our argument is based on the Abel-Plana summation formula.

We denote by $\gamma_\eps$ the following integration contour. 
\begin{figure}[H]\label{fig:contour}
  \centering
  \begin{tikzpicture}
    % Configurable parameters
    % Axes
    \draw (-3.2, 0) -- (3.2,0)
    (0, -1.5) -- (0, 1.5);

    \draw[blue, thick]
    (0,-0.7) arc (-90:0:0.7)  -- (3.2, 0)
    [postaction={decorate, decoration={markings, mark=at position 0.2 with {\arrow{latex}}}}]
    [postaction={decorate, decoration={markings, mark=at position 0.6 with {\arrow{latex}}}}];
    \draw[blue] (0.9,-0.4) node{$\gamma_\eps$};
    \draw[blue] (0.7,0.2) node{$\eps$};
    \draw[blue] (-0.35,-0.7) node{$-i\eps$};
  \end{tikzpicture}
\end{figure}

\begin{lemma}\label{lem:abelplana}
  Let~$\alpha, \beta,\beta'\in\R$ with~$\alpha \leq \beta<\beta'$. Let $f$ be an analytic function on a neighborhood of~$U:=\{z\in\C \mid \alpha\leq \Re(z)\leq \beta'\}\setminus\{\alpha,\beta\}$. Assume that the following holds~:
  \begin{enumerate}
  \item $f(z)$ is holomorphic at $\beta$ if $\beta$ is an integer, and otherwise $f(z)=o(|z-\beta|^{-1})$ as $z\to\beta$ with $z\in U$,
  \item $f(x\pm iy) = o(\e^{2\pi y}/y^2)$ as~$y\to+\infty$, uniformly in~$x\in[\alpha, \beta]$,
  \item $f$ is integrable on~$(\alpha, \beta)$.
  \end{enumerate}
  Then we have
  \begin{align}\label{eq:abelplana}
    \sum_{\alpha < n \leq \beta} f(n) = \int_{\alpha}^{\beta} f(t)\df t - C(f, \alpha) + C(f, \beta), 
  \end{align}
  where 
  $$
  C(f, \alpha):=\lim_{\eps\to0^+}\bigg(- i \int_{\gamma_\eps}  \frac{f(\alpha + it)\df{t}}{\e[-\alpha]\e^{2\pi t}-1}+ i \int_{\overline{\gamma_\eps}}  \frac{f(\alpha - it)\df{t}}{\e[\alpha]\e^{2\pi t}-1}\bigg). $$
\end{lemma}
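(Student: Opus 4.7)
The natural approach is residue calculus based on two complementary meromorphic kernels. Introduce
\[
\kappa_+(z):=-\frac{1}{e^{-2\pi iz}-1}, \qquad \kappa_-(z):=\frac{1}{e^{2\pi iz}-1},
\]
which satisfy $\kappa_+-\kappa_-\equiv 1$, have residue $\tfrac{1}{2\pi i}$ at every integer, and decay exponentially as $\Im z\to+\infty$ and $\Im z\to-\infty$ respectively. It is this decay that will compensate the growth of~$f$ allowed by assumption~(2).

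I then form two closed contours~$\Gamma_\pm$. The contour~$\Gamma_+$, traversed counter-clockwise, has: as right side, the image of~$\gamma_\eps$ under $t\mapsto\beta+it$ (the very arc appearing in the definition of~$C(f,\beta)$), ascending from $\beta+\eps$ to $+i\infty$; a horizontal segment at $\Im z=T$ that will vanish as $T\to\infty$; as left side, the reverse of the image of $\gamma_\eps$ under $t\mapsto\alpha+it$; and as bottom, the real interval $[\alpha+\eps,\beta+\eps]$ traversed eastward with a small downward semicircular detour around each integer. The contour $\Gamma_-$ is the reflection in the real axis, using $\overline{\gamma_\eps}$ and upward semicircular detours. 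Because all indentations point outward from the respective enclosed region, each $\Gamma_\pm$ encloses exactly the integers $n\in(\alpha,\beta]$, and Cauchy's residue theorem gives
\[
\oint_{\Gamma_+}\kappa_+(z)f(z)\,dz \;=\; \oint_{\Gamma_-}\kappa_-(z)f(z)\,dz \;=\; \sum_{\alpha<n\le\beta}f(n).
\]

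The heart of the argument is to evaluate each contour integral piece by piece in the limit $T\to\infty$ followed by $\eps\to0^+$. The substitutions $z=\alpha\pm it$ and $z=\beta\pm it$ built into the definition of~$C(f,\cdot)$ make the left and right vertical sides of $\Gamma_\pm$ reproduce exactly $-C(f,\alpha)$ and $+C(f,\beta)$; the horizontal arcs at $\Im z=\pm T$ vanish by assumption~(2) combined with the decay of~$\kappa_\pm$; on the real-axis pieces, the straight portions yield principal-value integrals $\pm\mathrm{P.V.}\int_\alpha^\beta\kappa_\pm(t)f(t)\,dt$ (with opposite signs, since the bottom of $\Gamma_+$ is eastward while the top of $\Gamma_-$ is westward), while each half-circle indentation contributes $+\tfrac12 f(n)$ via a standard half-residue computation. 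Adding the two decomposed contour identities, the (individually divergent) principal values combine through $\kappa_+-\kappa_-\equiv 1$ into the honest Riemann integral $\int_\alpha^\beta f(t)\,dt$, which is convergent by assumption~(3); rearranging then yields the claimed formula.

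The most delicate point, and the main obstacle I anticipate, is the $\eps\to0^+$ limit at the endpoints, especially when $\alpha$ or $\beta$ is itself an integer, so that $\kappa_\pm$ has a pole there in addition to the possible singularity of~$f$. Assumption~(1) is precisely what is needed: when $\beta\in\Z$, the holomorphicity of~$f$ at~$\beta$ ensures that the quarter-circle part of $\gamma_\eps$ near~$\beta$ contributes exactly the $f(\beta)$ required for $n=\beta$ to appear in the sum; when $\beta\notin\Z$, the weaker condition $f(z)=o(|z-\beta|^{-1})$ just barely keeps the corresponding contour integrals finite. The absence of any analogous hypothesis at~$\alpha$ mirrors the strict inequality $\alpha<n$, and one has to verify carefully that the quarter-circle near~$\alpha$ introduces no spurious~$f(\alpha)$-contribution even when~$\alpha\in\Z$.
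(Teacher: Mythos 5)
Your contour-integral argument with the two kernels $\kappa_\pm(z)$ and the indented rectangular contours $\Gamma_\pm$ is precisely the classical proof of the Abel--Plana formula, and it is exactly what the paper intends: its own ``proof'' of Lemma~\ref{lem:abelplana} is a one-line citation to Olver~\cite[Chapter~8, \S3]{Olver1997}, whose argument is the one you reconstruct. You correctly identify the role of each hypothesis (the growth condition~(2) against the exponential decay of $\kappa_\pm$, the integrability~(3) for the $\mathrm{P.V.}$ integrals to combine into $\int_\alpha^\beta f$, and condition~(1) for the $\eps\to0^+$ limit at $\beta$), and you flag the genuinely delicate endpoint analysis; the only imprecision is that you call $\Gamma_-$ ``the reflection'' of $\Gamma_+$, which, read literally, would give a clockwise contour and $\oint_{\Gamma_-}\kappa_- f = -\sum f(n)$, but your subsequent statements (westward top, enclosure of $(\alpha,\beta]$) make clear that you intend the positively oriented boundary of the reflected region, so this is a wording issue rather than a gap.
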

\begin{proof}
The arguments in~\cite[Chapter~8, eq. (3.01)]{Olver1997} are readily adapted.
\end{proof}

For $k\in\Z_{\geq 0}$, $k\neq 1$, let $\tilde B_{k}(t)= {B}_{k}(\{t\})$ where $\{t\}$ is the fractional part of $t$ and ${B}_{k}$ is the $k$-th Bernoulli polynomial, and let $\tilde B_{1}(t)={B}_1(\{t\})$ for $t\notin\Z$ and $\tilde B_1(n)=0$ for $n\in\Z$.
We require the following computation. 
\begin{lemma}\label{lem:B-integral}
  For~$\ell\in\Z_{\geq0}$ and~$v\in[0, 1)$, we have
  $$ \int_{0}^\infty \Im\bigg( \frac{(-it)^\ell}{\e[v] \e^{2\pi t}-1}\bigg) \df t=  \frac{(-1)^\ell \tilde B_{\ell+1}(v)}{2(\ell+1)}. $$
  Moreover, for all~$\omega\in\C\minus(-\infty, 1]$, we have
  $$ \int_0^\infty \frac{\df t}{\omega  \e^{2\pi t}-1} = \frac{-1}{2\pi}\log(1-\omega^{-1})  $$
  with $\log$ being the principal determination.
\end{lemma}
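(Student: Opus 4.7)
The plan is to tackle the two identities independently. For the second (scalar) identity, I would substitute $u = \omega \e^{2\pi t}$, so $\df u = 2\pi u\, \df t$ and $u$ traces the oriented ray from $\omega$ to infinity along the direction $\arg\omega$. The hypothesis $\omega \notin (-\infty, 1]$ guarantees that this ray avoids both $u = 0$ and $u = 1$, and that the principal branch of $\log(1 - u^{-1})$ is single-valued and continuous along it. Partial fractions then give $\frac{1}{u(u-1)} = \frac{1}{u-1} - \frac{1}{u}$, whose antiderivative $\log(1 - u^{-1})$ vanishes at infinity and equals $\log(1 - \omega^{-1})$ at the initial point; this delivers the stated formula.

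For the first identity, the plan is to expand the denominator as a geometric series
\begin{equation*}
  \frac{1}{\e[v]\e^{2\pi t} - 1} = \sum_{n \geq 1} \e[-nv]\e^{-2\pi n t} \qquad (t > 0),
\end{equation*}
interchange sum and integral (modulo the technical point below), compute each term as a gamma integral $\int_0^\infty t^\ell \e^{-2\pi n t}\df t = \ell!/(2\pi n)^{\ell+1}$, and use $\Im(\e[-nv](-i)^\ell) = -\sin(\pi\ell/2 + 2\pi nv)$ to rewrite the left-hand side as $-\frac{\ell!}{(2\pi)^{\ell+1}}\sum_{n \geq 1}\frac{\sin(\pi\ell/2 + 2\pi nv)}{n^{\ell+1}}$. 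I would then identify this sum through the Hurwitz--Fourier expansion
\begin{equation*}
  \tilde B_{\ell+1}(v) = -\frac{(\ell+1)!}{(2\pi i)^{\ell+1}}\sum_{n \neq 0}\frac{\e[nv]}{n^{\ell+1}},
\end{equation*}
splitting into cases according to the parity of $\ell$ (which dictates whether the real or imaginary part of $\sum_{n \geq 1}\e[nv]/n^{\ell+1}$ is relevant) and tracking the resulting factors of $i$. After this bookkeeping, the target identity $(-1)^\ell \tilde B_{\ell+1}(v)/(2(\ell+1))$ emerges cleanly.

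The main technical obstacle will be the legitimacy of the interchange of sum and integral when $\ell = 0$: the series $\sum_n |\e[-nv]|\e^{-2\pi n t} = \e^{-2\pi t}/(1 - \e^{-2\pi t})$ is not integrable near $t = 0$, so Fubini does not apply directly. For $v \in (0, 1)$ I would instead truncate to $n \leq N$, control the tail via the closed form $\sum_{n > N}\e[-nv]\e^{-2\pi n t} = \e[-(N+1)v]\e^{-2\pi(N+1)t}/(1 - \e[-v]\e^{-2\pi t})$, which is bounded by $\ll_v \e^{-2\pi Nt}$ uniformly in $t \geq 0$ since the denominator stays away from zero, and then pass to the limit $N \to \infty$; this also accommodates the merely conditional convergence of the Hurwitz series in this regime. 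The degenerate case $v = 0$ requires no argument: the integrand is then real, so its imaginary part vanishes identically, in agreement with the convention $\tilde B_1(0) = 0$ fixed in the paper.
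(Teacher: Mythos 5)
Your proof is correct. For the first identity you and the paper take essentially the same route: expand $1/(\e[v]\e^{2\pi t}-1)$ as a geometric series, evaluate term-by-term via the gamma integral to obtain a polylogarithm-type sum, and conclude with the Fourier expansion of Bernoulli polynomials; the paper shortcuts the first two steps by citing Gradshteyn--Ryzhik 3.411.6, which is exactly that computation, valid for $|a|\leq 1$, $a\neq 1$ and all $\Re\mu>0$, so it silently absorbs the delicate $\ell=0$ case that you treat by hand with the explicit truncation-and-tail estimate (a more self-contained but equivalent handling; note your remark that the $v=0$ integrand is real is correct within the $\ell=0$ discussion, where the numerator is the constant $1$). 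For the second identity you take a genuinely different route: the paper expands in powers of $\omega^{-1}$ for $\omega>1$ and appeals to analytic continuation, whereas you substitute $u=\omega\e^{2\pi t}$, use partial fractions, and read off the answer from the antiderivative $\log(1-u^{-1})$ along the ray $\{s\omega : s\geq 1\}$, checking that the hypothesis $\omega\notin(-\infty,1]$ keeps this ray away from $u=0$, $u=1$, and from the branch cut of the principal logarithm. Your substitution argument is slightly cleaner since it avoids the analytic-continuation step and produces the formula directly on the full domain.
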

\begin{proof}
  The second claim is easy to prove by expanding the fraction as a power series in~$\omega$, first for $\omega\in(1,\infty]$, and then by analytic continuation. To show the first claim, first we note that for~$\ell=0$, $v=0$, the fraction is a real number, and both sides evaluate to~$0$. We may therefore assume that~$\ell\geq 1$ or~$v\neq 0$. Then
  \begin{align*}
    \int_{0}^\infty \Im \bigg(\frac{(-it)^\ell}{\e[v] \e^{2\pi t}-1}\bigg)\df t = {}& \frac{1}{(2\pi)^{\ell+1}}\Im\bigg( \e[-v] (-i)^\ell \int_{0}^\infty \frac{t^\ell \e^{-t}\df t}{1-\e[-v] \e^{-t}}\bigg) \\
    = {}& \frac{\ell!}{(2\pi)^{\ell+1}}\Im{}\bigg( (-i)^\ell \sum_{n\geq 1}\frac{\e[-nv]}{n^{\ell+1}}\bigg)
  \end{align*}
  by~\cite[eq. (3.411.6)]{GZ}. We write~$\Im{}( (-i)^\ell\e[-nv] )= (-1)^{\ell+1} \cos(2\pi nv - \frac\pi2(\ell+1))$, and conclude by the Fourier expansion of Bernoulli polynomials~\cite[eq. (9.622.1)]{GZ}.
\end{proof}

\subsection{First reciprocity formula for the $q$-Pochhammer symbol}\label{ss:1rf}

We fix the notations as follows. Let~$N, d\geq 1$ be coprime. Let~$\alpha = p/q$ in reduced form, and~$\gamma = \left( \begin{smallmatrix} p & -\bar{q} \\ q & \bar{p} \end{smallmatrix} \right)$ for some~$(\bar{p}, \bar{q})$ satisfying~$p\bar{p} + q\bar{q} = 1$. Throughout the rest of Section~\ref{ss:1rf}, all error terms will be allowed to depend on~$d$ and $\gamma$ (but not on~$N$).
We write $h=Np-d\bar{q}$, $k=Nq+d\bar{p}$, so that
$$ x = \frac Nd, \qquad \gamma(x) = \frac{Np-d\bar{q}}{Nq+d\bar{p}} = \frac hk $$
and notice that this implies that $(h,k)=1$ (since $(h,k)$ divides $qh-pk=d$ and $(d,(h,k))=(p,q)=1$) and
\begin{align}
  \frac{h}{k}&=\frac{p}{q}-\frac{d}{kq},\label{redeq}\\
  \frac{d}{ kq}+\frac{k}{dq}&=\frac{N}{d}-\frac{h}k+\frac{p+\overline p}q.\label{expid}
\end{align}

We also recall from~\cite[p.45]{Iwaniec1997} that the Dedekind sum~$\deks(\alpha)$ is defined by
\begin{equation}
s(\alpha)= s(p, q) := \sum_{n=1}^{q-1} \frac{n}q \syf{\frac{pn}{q}}, \qquad \text{where }\syf{x} = \{x\} - \tfrac12,\label{eq:def-dedekind}
\end{equation}

For $z\in (\C \setminus \R)\cup(0,1)$, we let
\begin{equation}\label{eq:deff}
  \lf(z) := \log(1-\e[z])
\end{equation} 
taking the determination which is real on the positive imaginary axis. Notice that with this choice we have
\begin{align}
  &\lf(z)=\lf(1-z)+\pi i (2z-1); &&\label{ff1}\\
  &\lf(1+z)=\lf(z), \  \text{if }\Im(z)>0; && \lf(1+z)-\lf(z)=2\pi i, \  \text{if }\Im(z)<0;\label{ff2}\\
  &\lf(z)=\log(2\sin(\pi z))+ i\pi  (z-\tfrac12) \  \text{if }z\in(0,1).\label{ff3} 
\end{align}
Moreover, if $t\in(0,1)$, expanding the logarithm in its Taylor series
\begin{equation}\label{ff4}
  \sum_{g=1}^{q} \lf\Big(\frac{g-t}q\Big)=-\sum_{g=1}^{q}\sum_{m=1}^\infty\frac{\e[-tm/q]\e[mg/q]}{m}=\sum_{m=1}^\infty\frac{\e[-mt]}{m}=\lf(1-t)
\end{equation}
and the same formula holds for $t\in(\C \setminus \R)\cup(0,1)$ by analytic continuation.
Finally, for~$\lambda\in(\C \setminus \R)\cup(0,1)$, we let
\begin{equation}\label{dig}
  \Lie(\lambda) = \int_0^\lambda \lf(1-t)\df t - \frac{\pi i}{12}.
\end{equation}
The function~$\Lie$ is holomorphic in~$(\C \setminus \R)\cup(0,1)$. Note that~$\Li_2(1) = \pi^2/6$, so that whenever~$\Im(\lambda)<0$,
\begin{equation*}
\Lie(\lambda) = \int_0^\lambda \lf(1-t)\df t - \frac{\pi i}{12} = \frac1{2\pi i}\int_{\e[-\lambda]}^1 \log(1-z) \frac{\df z}z + \frac{\Li_2(1)}{2\pi i} =\frac{\Li_2(\e[-\lambda])}{2\pi i}.
\end{equation*}

Before stating the main theorem of this section, we define
\begin{equation}\label{defh}
  H_\kappa(u, v) := 
  \lim_{\eps\to0^+}\bigg( i \int_{\gamma_\eps}  \frac{\lf(u - it\kappa )\df{t}}{\e[v]\e^{2\pi t}-1}- i \int_{\overline{\gamma_\eps}}  \frac{\lf(u + it\kappa)\df{t}}{\e[-v]\e^{2\pi t}-1}\bigg)
\end{equation}
for $\kappa>0, u\in[0,1], v\in\R/\Z$. Notice that if~$v\not\in\Z$ we have
\begin{equation}\label{sph1}
  H_\kappa(u, v) = i
  \int_{0}^\infty   \bigg( \frac{\lf(u - it\kappa )}{\e[v]\e^{2\pi t}-1}-   \frac{\lf(u + it\kappa )}{\e[-v]\e^{2\pi t}-1}\bigg)\df{t},
\end{equation}
whereas if $v\in\Z$ and $0<u<1$ then
\begin{equation}\label{sph2}
  H_{\kappa}(u, 0) = -\frac12 \lf(u) +i\int_{0}^\infty    \frac{\lf(u - it\kappa)-\lf(u + it \kappa)}{\e^{2\pi t}-1}\df{t} ,
\end{equation}
as can seen by isolating the contribution of the two circular paths, which are both~$\pi i/2$ times the residue at~$0$.

\begin{theorem}\label{th:Ir}
  For~$1\leq r < k$, letting~$L := \lfloor rd/k\rfloor$, and~$\lambda = \{rd/k\}$, we have
  \begin{align*} 
    \frac{(\e[\gamma x])_r\e[\frac{\gamma x}{24}]}{(\e[x])_L\e[\frac{x}{24} ]} =\exp\bigg(&
    \frac{\pi i(p+\overline p)}{12q}-\pi i \deks(p,q)-\frac{\pi i }4+\frac12 \log\frac kd+
    \frac{k}{qd} \Lie(\lambda)+\Er_r(\lambda,d/k)\bigg), 
  \end{align*}
  where $\deks$ is the Dedekind sum~\cite[p.45]{Iwaniec1997} and for $\lambda\in[0,1)$, $\kappa>0$ and, for $s\in\Z/q\Z$,
  \begin{align} 
    \Er_s(\lambda,\kappa)&:=- H_{\kappa}\Big(\frac{\ang{ps} -\lambda}{q},0\Big)-\hspace{-1em} \ssum{g=1\\ g\not\equiv ps\mod q}^q i\int_{0}^\infty   \bigg( \frac{\lf(\frac{g-\lambda}q - it\kappa)}{\e[ \frac{g\bar{p}-s}q]\e^{2\pi t}-1}-   \frac{\lf(\frac{g-\lambda}q + it\kappa)}{\e[- \frac{g\bar{p}-s}q]\e^{2\pi t}-1}\bigg)\df{t}, 
    \label{defg}
  \end{align}
  with $\ang{n}$ indicating the representative of $n\mod q$ in~$[1,q]$. Moreover, for all~$s\mod{q}$ and~$\kappa\in(0, 1]$, the function~$\lambda\mapsto \Er_s(\lambda,\kappa)$ is defined and holomorphic in the strip~$\{\Re(\lambda)\in[0, 1)\}$, and
  \begin{align} 
    \Er_s(\lambda,\kappa)&\ll \abs{\log\abs{1-\lambda}} + \log(1/\kappa)
    \label{bdfg}
  \end{align}
  uniformly for $\Re(\lambda)\in[0,1)$ and~$\Im(\lambda)\ll 1$.
\end{theorem}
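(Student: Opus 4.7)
The plan is to take logarithms and decompose the sum $\sum_{i=1}^{r}\lf(ih/k)$ by residue class $s\equiv i\pmod{q}$, then apply the Abel-Plana formula (Lemma~\ref{lem:abelplana}) to each inner sum. Writing $i=qj+s$ with $s\in\{1,\dots,q\}$ and using~\eqref{redeq} to expand $ih/k = ip/q - id/(kq)$, I reduce $ip/q$ modulo~$1$ to $\ang{ps}/q$ via the branch-shift rule~\eqref{ff2} for $\lf$. When summed over $i=1,\dots,r$, the resulting branch shifts assemble via the definition~\eqref{eq:def-dedekind} of the Dedekind sum into the contribution $-\pi i\,\deks(p,q)$, up to integer constants. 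The prefactor $\e[\gamma x/24]/\e[x/24]$, simplified via the identity~\eqref{expid}, produces the $\frac{\pi i(p+\bar{p})}{12q}$ term plus residual pieces that recombine later.

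For the main term, I apply Abel-Plana to each inner sum $\sum_{j}\lf(\ang{ps}/q - (qj+s)d/(kq))$ and change variables $u=(qj+s)d/(kq)$. The continuous integral parts summed over $s$ collapse by the crucial identity~\eqref{ff4}, namely $\sum_{s=1}^{q}\lf(\ang{ps}/q - u) = \lf(1-qu)$. A further substitution $v=qu$, together with the definition~\eqref{dig} of $\Lie$, yields the dilogarithmic main term $\frac{k}{qd}\Lie(\lambda) + \frac{\pi i\,k}{12\,qd}$; the residual constant recombines with the $\e[\cdot/24]$ pieces above.

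The Abel-Plana boundary corrections are contour integrals over $\gamma_\eps$, in the spirit of~\eqref{defh}. For each $s$, they decompose into a \emph{resonant} contribution at $g\equiv ps\pmod{q}$, where the denominator $\e[(g\bar{p}-s)/q]\e^{2\pi t}-1$ vanishes at $t=0$, and \emph{non-resonant} contributions at the other $g\in\{1,\dots,q\}$. The non-resonant integrals assemble directly into the second part of the definition~\eqref{defg} of $\Er_r$. The resonant contribution produces the $H_\kappa((\ang{ps}-\lambda)/q, 0)$ summand of $\Er_r$ and, through a residue/principal-value analysis using Lemma~\ref{lem:B-integral} (the Bernoulli-integral evaluations), simultaneously extracts the dual Pochhammer $\log(\e[x])_L$ on the left-hand side, together with the constants $\tfrac{1}{2}\log(k/d)$ and $-\pi i/4$. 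The dual Pochhammer emerges by re-identifying the integer-lattice residues of $1/(\e^{2\pi t}-1)$ as values $\lf(iN/d)$ via the identity~\eqref{expid}.

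For holomorphicity of $\Er_s(\lambda,\kappa)$ in the strip $\{\Re(\lambda)\in[0,1)\}$, each argument $(g-\lambda)/q \pm it\kappa$ with $g\in\{1,\dots,q\}$ stays in $(\C\setminus\R)\cup(0,1)$ where $\lf$ is holomorphic, and the integrand decays exponentially in $t$ via the $1/(\e^{2\pi t}-1)$ factor. The bound~\eqref{bdfg} follows by elementary estimates: the $\log(1/\kappa)$ arises from the small-$t$ region via $\lf(it\kappa)\sim\log(t\kappa)$ against the $1/(2\pi t)$ singularity, while the $|\log|1-\lambda||$ term comes from the unique resonant summand where $(\ang{ps}-\lambda)/q$ approaches $0$ as $\lambda\to 1$, using $\lf(z)\sim\log z$ near $0$. \textbf{The main obstacle} lies in the third step: correctly matching the resonant residues against the dual Pochhammer and the Dedekind sum, tracking each branch of $\lf$ consistently across the $q$ residue classes, and verifying that Lemma~\ref{lem:B-integral} reproduces the constants $-\pi i/4$ and $\tfrac{1}{2}\log(k/d)$ exactly, with no leftover discrepancy.
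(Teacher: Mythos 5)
Your plan matches the paper's at the level of tools (Abel--Plana, the collapse identity~\eqref{ff4}, the $\Lie$ integral, Lemma~\ref{lem:B-integral}), but there is a structural gap in how you apply Abel--Plana. You propose to split $\sum_{n=1}^{r}\lf(nh/k)$ \emph{only} by residue class $s\equiv n\pmod{q}$ and then apply Lemma~\ref{lem:abelplana} to each inner sum over $j$. However, after reducing $np/q$ modulo $1$ to $\ang{ps}/q\in(0,1]$, the remaining argument $\ang{ps}/q - (qj+s)d/(kq)$ decreases all the way down to about $\ang{ps}/q - (L+\lambda)/q$, which is negative (and in general less than $-1$, $-2$, \dots when $L\geq q$). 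Thus the argument of $\lf$ crosses the integer branch points of $\lf$ as $j$ ranges over your inner sum, so the analyticity hypothesis of Lemma~\ref{lem:abelplana} fails; you cannot apply Abel--Plana in one stroke to this sum. The paper avoids this by a \emph{two-level} decomposition: first into $L+1$ blocks $(r_\ell, r_{\ell+1}]$ with $r_\ell=\ell k/d$, then into residue classes modulo $q$ inside each block; the representative $g_a$ of $pa\bmod q$ is chosen in $[\ell+1,\ell+q]$, which is precisely what keeps the argument of $\lf$ inside $(0,1)$ on each block, so Abel--Plana genuinely applies there.

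This omission also affects your account of where the dual factor $(\e[x])_L$ comes from. In the paper it arises \emph{algebraically} from telescoping the Abel--Plana boundary terms across the $L+1$ blocks, via Lemma~\ref{lemmah}(2): $H_\kappa(1,v)-H_\kappa(0,v)=\lf(\{-v\})$, applied at $v=-\ell N/d$, gives $\sum_{1\leq\ell\leq L}\lf(\{\ell N/d\})=\log(\e[x])_L$. Your proposed mechanism --- ``re-identifying the integer-lattice residues of $1/(\e^{2\pi t}-1)$'' --- is a different idea and does not correspond to what happens: the poles of $1/(\e^{2\pi t}-1)$ at $t\in i\Z$ do not enter the computation, and the obstacles are logarithmic branch points of $\lf$, not poles. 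Similarly, the paper's Dedekind sum comes from the boundary terms $\sum_{g=1}^{q-1}H_{d/k}(g/q,g\bar p/q)$ at the $\ell=0$ edge via Lemma~\ref{lemmah}(1), not from accumulated branch shifts in reducing $np/q$. Branch shifts do produce Dedekind sums in other derivations (e.g.\ the $\eta$-transformation), so your idea is not absurd, but without the block structure your bookkeeping of both the branch shifts and the boundary corrections cannot close; in particular, your stated worry at the end (``\emph{the main obstacle}'') is well-placed, and the missing ingredient is the $\ell$-block decomposition.
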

\begin{remark}
  If $\lambda\in(0,1)$ or if $s\not\equiv 0\mod q$ by~\eqref{sph2} we can write $\Er_\lambda$ as 
  \begin{align} 
    \Er_s(\lambda,\kappa)&:= \frac12 f\Big(\frac{\ang{ps}-\lambda}q\Big)- \sum_{g=1}^q i\int_{0}^\infty   \bigg( \frac{\lf(\frac{g-\lambda}q - it\kappa)}{\e[ \frac{g\bar{p}-s}q]\e^{2\pi t}-1}-   \frac{\lf(\frac{g-\lambda}q + it\kappa)}{\e[- \frac{g\bar{p}-s}q]\e^{2\pi t}-1}\bigg)\df{t}.
    \label{defg2}
  \end{align}
  We remark that if $(q,d)=1$ (i.e. $(k,d)=1$), one cannot have $\{rd/k\}=0$. \end{remark}
\begin{remark}\label{remplr}
  Notice that by~\eqref{expid} we have ${\e[\frac{\gamma x}{24}]}/{\e[\frac{x}{24} ]}\cdot \exp( - \frac{\pi i(p+\overline p)}{12q} )=\exp(-\frac{\pi i}{12} (\frac{d}{ kq}+\frac{k}{dq}))$.
\end{remark}
In order to prove Theorem~\ref{th:Ir}, we require some properties of the function
$H_\kappa(u, v)$.
\begin{lemma}\label{lemmah}
  We have~:
  \begin{enumerate}
  \item\label{prop-B-sym} ${H_\kappa(1-u, -v)} + H_\kappa(u, v)= - 2{\pi i} B_1(u)\tilde B_1(v) + \pi i\kappa \tilde B_2(v)$ for~$v\notin\Z$,
  \item\label{prop-B-f} $H_\kappa(1, v) - H_\kappa(0, v) = \lf(\{-v\}) $ for~$v\notin\Z$,
  \item\label{prop-B-pole} $H_\kappa(1, 0) =   - \frac{\log(  \kappa) }{2 }-   \frac{\pi  i   }{4  } + \frac{ \pi i \kappa}{12} $.
  \end{enumerate}
\end{lemma}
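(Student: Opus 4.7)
The plan is to manipulate the defining integral representation of $H_\kappa$ directly, using the functional equations \eqref{ff1} and \eqref{ff2} of $\lf$ together with the Bernoulli-polynomial integrals of Lemma~\ref{lem:B-integral}. For parts (1) and (2) the hypothesis $v\notin\Z$ lets me work from the pole-free form \eqref{sph1}; for part (3) I must return to the regularized contour definition \eqref{defh} because the logarithmic singularity of $\lf$ at $1$ and the pole of $1/(e^{2\pi t}-1)$ at $0$ collide at $(u,v)=(1,0)$.

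For (1), apply \eqref{ff1} to obtain $\lf(1-u\mp it\kappa)=\lf(u\pm it\kappa)+\pi i(1-2u)\pm 2\pi t\kappa$. Substituting into \eqref{sph1} for $H_\kappa(1-u,-v)$, the $\lf(u\pm it\kappa)$-pieces reassemble to $-H_\kappa(u,v)$ (the weights $\e[-v]$, $\e[v]$ on these terms are precisely the ones appearing in $H_\kappa(u,v)$ but with opposite signs), and what remains is
\begin{align*}
i\int_0^\infty\Bigl[\frac{\pi i(1-2u)+2\pi t\kappa}{\e[-v]e^{2\pi t}-1}-\frac{\pi i(1-2u)-2\pi t\kappa}{\e[v]e^{2\pi t}-1}\Bigr]\df t.
\end{align*}
Since $\e[-v]=\overline{\e[v]}$, the difference of the two fractions equals $-2i\Im\frac{1}{\e[v]e^{2\pi t}-1}$ while their sum equals $2\Re\frac{1}{\e[v]e^{2\pi t}-1}$. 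Lemma~\ref{lem:B-integral} with $\ell=0$ and $\ell=1$ evaluates these to $\tilde B_1(v)/2$ and $\tilde B_2(v)/4$ respectively, and collecting the resulting constants produces $-2\pi i B_1(u)\tilde B_1(v)+\pi i\kappa\,\tilde B_2(v)$.

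For (2), again starting from \eqref{sph1}, \eqref{ff2} gives $\lf(1+it\kappa)-\lf(it\kappa)=0$ and $\lf(1-it\kappa)-\lf(-it\kappa)=2\pi i$ for $t>0$. Hence $H_\kappa(1,v)-H_\kappa(0,v)=-2\pi\int_0^\infty\frac{\df t}{\e[v]e^{2\pi t}-1}$, which by the second part of Lemma~\ref{lem:B-integral} with $\omega=\e[v]$ evaluates to $\log(1-\e[-v])$; this coincides with $\lf(\{-v\})$ since $\{-v\}\in(0,1)$ and $\lf$ is the principal logarithm there by \eqref{ff3}.

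For (3), I split each of $\gamma_\eps,\overline{\gamma_\eps}$ into a quarter arc of radius $\eps$ and a ray $[\eps,\infty)$. On the rays, \eqref{ff2} together with the identity $\lf(-it\kappa)-\lf(it\kappa)=2\pi t\kappa-\pi i$ (a direct consequence of \eqref{ff1}) reduces the combined ray contribution to $2\pi i\kappa\int_\eps^\infty\frac{t\,\df t}{e^{2\pi t}-1}-\pi\int_\eps^\infty\frac{\df t}{e^{2\pi t}-1}$, which by $\int_0^\infty\frac{t\,\df t}{e^{2\pi t}-1}=\frac{1}{24}$ and the second part of Lemma~\ref{lem:B-integral} equals $\frac{\pi i\kappa}{12}+\frac12\log(2\pi\eps)+o(1)$. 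On the arcs I use the local expansion $\lf(1+w)=\log(-2\pi i w)+o(1)$ as $w\to0$, with the branch pinned down at the arc endpoints $\mp i\eps$ where $\lf$ equals $\log(2\pi\kappa\eps)+i\pi/2$ by \eqref{ff3}. Parametrizing $t=\eps e^{i\theta}$ and integrating, each arc yields $-\frac14\log(2\pi\kappa\eps)+c_\pm$ with $c_++c_-=-\frac{\pi i}{4}$, so the two arcs together contribute $-\frac12\log(2\pi\kappa\eps)-\frac{\pi i}{4}+o(1)$. The divergent $\log\eps$ terms on the rays and arcs cancel, leaving $H_\kappa(1,0)=-\frac{\log\kappa}{2}-\frac{\pi i}{4}+\frac{\pi i\kappa}{12}$.

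The main obstacle I expect is the branch bookkeeping in (3): along each quarter arc one must continue the determination of $\lf(1\pm it\kappa)$ analytically, simultaneously matching the real value prescribed by \eqref{ff3} at the endpoint $\mp i\eps$ and the determination implicit in the ray identity at the arc's other endpoint on the real axis. Once these determinations are consistently specified, all the surviving constants fall into place after a short direct calculation.
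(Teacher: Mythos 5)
Your proof is correct and follows essentially the same route as the paper: parts (1) and (2) use \eqref{ff1}--\eqref{ff2} and Lemma~\ref{lem:B-integral} exactly as the paper does, and part (3) performs the same arc/ray decomposition of $\gamma_\eps$ and $\overline{\gamma_\eps}$, with all branch bookkeeping and constants checking out. The only cosmetic difference is in (3), where you expand $\lf(1+w)$ locally on each arc and track branches separately, whereas the paper first normalizes to $\lf(it\kappa)$ everywhere and then combines the two arcs into a single semicircular contour $C_\eps$; both yield the same $-\tfrac12\log(2\pi\eps\kappa)-\tfrac{\pi i}4$ from the arcs and the same $\tfrac{\pi i\kappa}{12}+\tfrac12\log(2\pi\eps)$ from the rays.
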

\begin{proof}
  Let $v\notin\Z$. By~\eqref{ff1}, Lemma~\ref{lem:B-integral} and~\eqref{ff3} we have
  \begin{align*}
    {H_\kappa(1-u, -v)} + H_\kappa(u, v)&
    =  i
    \int_{0}^\infty   \bigg(\frac{\lf(1-u - it\kappa )}{\e[-v]\e^{2\pi t}-1}- \frac{\lf(1-u + it\kappa )}{\e[v]\e^{2\pi t}-1}\bigg)\df{t}+{}\\
    &\quad +i
    \int_{0}^\infty   \bigg(    \frac{\lf(u - it\kappa )}{\e[v]\e^{2\pi t}-1}-\frac{\lf(u + it\kappa )}{\e[-v]\e^{2\pi t}-1}\bigg)\df{t}\\
    &= -\int_{0}^\infty   \bigg(\frac{(1- 2u)\pi   -  2\pi i t\kappa}{\e[-v]\e^{2\pi t}-1}- \frac{
      (1- 2u)\pi   + 2\pi i t\kappa
    }{\e[v]\e^{2\pi t}-1}\bigg)\df t\\
    &=2\pi i(1- 2u) \Im \int_{0}^\infty    \frac{ \df t}{\e[v]\e^{2\pi t}-1}-4\pi i\kappa \Im \int_{0}^\infty   \frac{-i t\df t}{\e[v]\e^{2\pi t}-1}\\
%    &=- i(1- 2u)\Im (\lf(\{-v\}))  +\pi i\kappa \tilde B_2(v)\\
    &= -2{\pi i} B_1(u)\tilde B_1(v) +\pi i\kappa \tilde B_2(v).
  \end{align*}
  Also, by~\eqref{ff2}, we have
  \begin{align*}
    H_\kappa(1, v)-  H_\kappa(0, v) &= i
    \int_{0}^\infty   \bigg( \frac{\lf(1 - it\kappa )}{\e[v]\e^{2\pi t}-1}-   \frac{\lf(1 + it\kappa )}{\e[-v]\e^{2\pi t}-1}\bigg)\df{t}+{}\\
    &\quad + i \int_{0}^\infty   \bigg( \frac{\lf(- it\kappa )}{\e[v]\e^{2\pi t}-1}-   \frac{\lf( it\kappa )}{\e[-v]\e^{2\pi t}-1}\bigg)\df{t}\\
    &=-\int_{0}^\infty    \frac{2\pi }{\e[v]\e^{2\pi t}-1}\df{t}=\lf(\{-v\}).
  \end{align*}
  Finally, by~\eqref{ff1} and~\eqref{ff2} as $\eps\to0^+$ we have
  \begin{align*}
    & i \int_{\gamma_\eps}  \frac{\lf(1 - it\kappa )\df{t}}{\e^{2\pi t}-1}- i \int_{\overline{\gamma_\eps}}  \frac{\lf(1 + it\kappa)\df{t}}{\e^{2\pi t}-1}\\
    &\qquad= i \int_{\gamma_\eps}  \frac{\lf( it\kappa ) \df{t}}{\e^{2\pi t}-1}- i \int_{\overline{\gamma_\eps}}  \frac{\lf( it\kappa)\df{t}}{\e^{2\pi t}-1}- \int_{{\gamma_\eps}}  \frac{\pi  \df{t}}{\e^{2\pi t}-1} + \int_{0}^\infty \frac{  2\pi it\kappa\df{t}}{\e^{2\pi t}-1}+o(1).
  \end{align*}
  The last two integrals can be easily computed and contribute $-\frac{i\pi}4+\frac{\log(2\pi \eps)}2$ and $\frac{\pi i \kappa}{12}$ respectively. The contributions of the interval $(\eps,\infty)$ in the first two integral cancel out. Thus, since $\lf( it\kappa)=\log(2\pi  t \kappa)+o(1)$ as $|t|\to 0$ with $-\pi/2<\arg t<\pi/2$, we have that the first two integral contribute
  \[
    i \int_{C_\eps}  \frac{\log(2\pi  t \kappa) \df{t}}{\e^{2\pi t}-1}=-\frac{\log(2\pi \eps\kappa)}{2}+o(1)
  \]
  where $C_\eps$ is the semicircle centered at the origin going from $-i\eps$ to $i\eps$ counter-clockwise. We then have 
  \[
    H_\kappa(1,0)=-\frac{\log \kappa}{2}-\frac{i\pi}4+\frac{\pi i \kappa}{12}.
  \]
\end{proof}
\begin{lemma}\label{lwb}
  Let $0<\kappa\leq1 , \Re(u)\in [0,1], v\in\R/\Z$ with $u\neq0,1$, and~$A\geq 1$. 
  Then
  $$ H_\kappa(u,v) = -\log\abs{u} + O_{v, A}(\log(2/\kappa)) $$
  uniformly in $u,\kappa$ with~$\Im(u)\leq A$.
\end{lemma}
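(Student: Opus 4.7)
My plan is to derive the estimate directly from the integral representations~\eqref{sph1}-\eqref{sph2} of $H_\kappa(u,v)$, isolating the logarithmic near-singularity of $\lf$ at $z=0$. Split the $t$-integration at a fixed $T=1/2$. The tail $t\geq T$ is controlled by $|\lf(u\mp it\kappa)|=O(1+t\kappa+|\Im u|)$ together with the exponential decay of the kernel $|\e[\pm v]\e^{2\pi t}-1|^{-1}\ll_v \e^{-2\pi t}$ (respectively of $(\e^{2\pi t}-1)^{-1}$ when $v=0$ in~\eqref{sph2}), yielding $O_{v,A}(1)$. On the bulk $t\in(0,T]$, invoke the local expansion $\lf(z)=\log(-2\pi iz)+G(z)$, where $G$ is holomorphic in a neighbourhood of $0$ with $G(0)=0$ and the branch of $\log(-2\pi iz)$ is continued analytically from the positive imaginary axis. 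The $G$-contribution and the $u$-independent constant $\log(-2\pi i)$ integrate against the kernels to give at most $O_{v,A}(1)$, leaving to estimate the integral of $\log(u\mp it\kappa)$ against $D_\pm(t)^{-1}:=(\e[\pm v]\e^{2\pi t}-1)^{-1}$.

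The $-\log|u|$ main term is extracted by decomposing $\log(u\mp it\kappa)=\tfrac12\log(|u|^2+t^2\kappa^2)\mp i\arctan(t\kappa/u)$ (with a consistent choice of branch). When $v=0$, the prefactor $-\tfrac12\lf(u)$ appearing in~\eqref{sph2} yields $-\tfrac12\log|u|+O(1)$; the remaining $\arctan$-integral $2\int_0^T\arctan(t\kappa/|u|)/(\e^{2\pi t}-1)\df t$, via the substitution $s=t\kappa/|u|$, becomes $(2|u|/\kappa)\int_0^{T\kappa/|u|}\arctan(s)/(\e^{2\pi|u|s/\kappa}-1)\df s$. On the range $s\in[1,\kappa/|u|]$, where $\arctan s\sim\pi/2$ and $\e^{2\pi|u|s/\kappa}-1\sim 2\pi|u|s/\kappa$, the integrand reduces to $1/(2s)$, producing the contribution $\tfrac12\log(\kappa/|u|)+O(1)$; summing with the prefactor gives $H_\kappa(u,0)=-\log|u|+\tfrac12\log\kappa+O(1)=-\log|u|+O(\log(2/\kappa))$. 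When $v\notin\Z$, the combinations $D_+(t)^{-1}\pm D_-(t)^{-1}$ are evaluated via Lemma~\ref{lem:B-integral}, and the same substitution produces the bound compatible with the stated estimate.

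The main delicate point is the regime $|u|\ll\kappa$, where $-\log|u|$ dominates $\log(2/\kappa)$ so that the coefficient of $\log|u|$ must come out exactly; this is handled by the further split of the bulk at $t=|u|/\kappa$ corresponding to $s=1$ above, with the contribution of $s<1$ giving $O(1)$ and the contribution of $s>\kappa/|u|$ being exponentially small in $|u|/\kappa$ because of the denominator $\e^{2\pi|u|s/\kappa}-1$. A secondary issue is the second near-singularity of $\lf$ at $z=1$ when $\Re u\to 1$; there $-\log|u|=O(1)$ and any additional logarithmic contribution is controlled via the reflection identity~\eqref{ff1} and Lemma~\ref{lemmah}, reducing to the case $\Re u$ close to $0$ already handled.
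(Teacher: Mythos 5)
Your proof follows essentially the same route as the paper's: reduce to $\Re(u)\le\tfrac12$ by the reflection identity, start from~\eqref{sph2}, extract one half of $-\log\abs{u}$ from the prefactor $-\tfrac12\lf(u)$ and the other half from the bulk of the $t$-integral after the rescaling $t\mapsto t\abs{u}/\kappa$, and dispose of the tail and the $v\notin\Z$ case quickly. The only genuine difference is presentational---the paper runs the bulk estimate through the contour double integral $\frac1{2\pi i}\iint\frac{\df z}{z}\frac{\df t}{t}$ obtained from $\lf'=\pi(\cot\pi z+i)$, whereas you split $\log(u\mp it\kappa)$ into modulus and argument to reach the equivalent $\arctan$-integral---and your minor imprecisions (the tail contributes $O(\log(2/\kappa))$, not $O(1)$, and the modulus/argument split is literal only for real $u$) are harmless since they are absorbed into the stated error term.
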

\begin{proof}
  The case $v\notin\Z$, $\Re(u)\in [0,1]$ 
   is an easy consequence of the bound 
  \begin{align}\label{tb}
    \lf(x+it)\ll |t| +|\log |t||,\quad  (x\in[0, 1], t\in\R_{\neq 0}).
  \end{align}
  Now assume $v\in\Z$ and~$\Re(u)\in(0, 1/2]$. We recall~\eqref{sph2}. By~\eqref{tb} the contribution to the integral from the interval $[2A,\infty)$ is $O(\log(2/\kappa))$. Next, we write
  $$ I(u, \kappa) := i\int_{0}^{2A} \frac{\lf(u - it\kappa)-\lf(u + it \kappa)}{\e^{2\pi t}-1}\df{t} = -\pi i\int_0^{2A} \int_{u-it\kappa}^{u+it\kappa} (\cot(\pi z)+i)\df z \frac{\df t}{\e^{2\pi t}-1}. $$
  Note that~$\cot(\pi z)  = 1/(\pi z) + O(1)$ uniformly for~$\Re(z)\in[0, 1/2]$, and that~$\frac1{\e^{2\pi t}-1} = \frac1{2\pi t} + O(1)$ for~$t>0$. Therefore,
  $$ I(u, \kappa) = \frac{1}{2\pi i}\int_0^{2A} \int_{u-it\kappa}^{u+it\kappa} \frac{\df z}z \frac{\df t}{t} + O_A(\log(2/\kappa)). $$
  Changing variables~$z \gets \abs{u} z$ and~$t\gets t\abs{u}/\kappa$, we get
  $$ I(u, \kappa) = \frac{1}{2\pi i}\int_0^{2A \kappa / \abs{u}} \int_{u'-it}^{u'+it} \frac{\df z}z \frac{\df t}{t} + O_A(\log(2/\kappa)), $$
  where~$\abs{u'}=1$. For~$t\leq 1/2$, we may bound the~$z$-integral by~$O(t)$, while
  for~$t\geq 2$, we have~$\int_{u'-it}^{u'+it} \frac{\df z}z = \pi i + O(1/t)$. We deduce
  $$ I(u, \kappa) = \frac{1}{2\pi i}\int_{1/2}^{2} \int_{u'-it}^{u'+it} \frac{\df z}z \frac{\df t}{t} + O_A(\log(2/\kappa)) - \frac12\log\abs{u}. $$
  The double integral here is bounded independently of~$u'$, and so finally
  $$ I(u, \kappa) = O_A(\log(2/\kappa)) - \frac12\log\abs{u}. $$
  We deduce~$H_\kappa(u, 0) = -\log\abs{u} + O_A(\log(2/\kappa))$ for~$0\leq\Re(u)\leq 1/2,$ $u\neq0$. On the other hand, by computations similar to Lemma~\ref{lemmah}, we get
  $$ H_\kappa(1-u, 0) + H_\kappa(u, 0) = -\lf(u) - \frac{\pi i}2 (2u-1) - \frac{\pi i \kappa}{6}, $$
  from which we get the claimed behaviour for all~$u$.
\end{proof}
\begin{proof}[Proof of Theorem~\ref{th:Ir}]
  For $0\leq\ell\leq L$, let~$r_\ell  = \ell k/d$. We split $(\e[\gamma x])_r$ as 
  $
  (\e[\gamma x])_r=\prod_{\ell =0}^LP_\ell^L
  $, where for~$0\leq \ell  < L$,  
  $$ P_\ell ^L = \prod_{r_\ell  < n \leq r_{\ell +1}} \bigg(1-\e[\frac{nh}k]\bigg) \qquad \text{and} \qquad P_L^L = \prod_{r_\ell  < n \leq r} \bigg(1-\e[\frac{nh}k]\bigg). 
  $$
  First we focus on the case $0\leq \ell<L$. By~\eqref{redeq} and by periodicity we have
  \begin{align*}
    P_\ell ^L ={}&  \prod_{r_\ell  < n \leq r_{\ell +1} } \bigg(1-\e[\frac{np}q - \frac{nd}{kq}]\bigg) =\prod_{a=1}^{q}\prod_{\frac{r_\ell -a}q < m \leq \frac{r_{\ell +1}-a}q} \bigg(1-\e[\frac{ap}q - \frac{d(a+mq)}{kq}]\bigg) \\
    ={}& \prod_{a=1}^{q}\prod_{\frac{r_\ell -a}q < m \leq \frac{r_{\ell +1}-a}q} \bigg(1-\e[\frac{g_a}q - \frac{d(a+mq)}{kq}]\bigg),
  \end{align*}
  where $g_a$ is the representative of the class $pa\mod q$ contained in $[\ell+1,\ell+q]$, so that in the last line each $\e[\,]$ is computed at a number in $(0,1)$. It follows that we can write
  \begin{align*}
    P_\ell ^L ={}&\exp\bigg(\sum_{a=1}^{q}\sum_{\frac{r_\ell -a}q < m \leq \frac{r_{\ell +1}-a}q} f\bigg(\frac{g_a}q - \frac{d(a+mq)}{kq}\bigg) \bigg)
  \end{align*}
  and that $0<\frac{g_a}q - \frac{dr_{\ell+1}}{kq}<1$ whenever $\frac{r_{\ell +1}-a}q$ is an integer.
  We can then apply Abel-Plana formula in the form of Lemma~\ref{lem:abelplana}.   
  Note that
  \begin{align*}
    \ssum{a=1}^{q} \int_{\frac{r_\ell -a}q}^{\frac{r_{\ell +1}-a}q} f\Big(\frac{g_a}q- \frac{d(a+tq)}{kq}\Big)\df t &= \frac k{qd} \ssum{a=1}^{q} \int_\ell ^{\ell +1} f\Big(\frac{g_a}q-\frac{t}q\Big)\df t \\
    &=\frac k{qd} \ssum{g=1}^{q} \int_0 ^{1} f\Big(\frac{g-t}q\Big)\df t=  \frac k{qd} \int_0 ^{1} \lf(1-t)\df t = 0
  \end{align*}
  by~\eqref{ff4}.  Therefore, by Lemma~\ref{lem:abelplana}, equation~\eqref{defh} and the definition of $r_\ell$,
  $$ P_\ell ^L = \exp\bigg(\ssum{a=1}^{q} H_{d/k}\Big(\frac{g_a}q-\frac\ell q, \frac{a}q - \frac{\ell k}{qd}\Big) - \ssum{a=1}^{q}  H_{d/k}\Big(\frac{g_a}q-\frac{\ell+1}q, \frac{a}q - \frac{(\ell +1)k}{qd}\Big)\bigg). $$
  Now, $\frac{k}{qd}=\frac{N}{d}+\frac{\overline p}q$ and $1\leq g_a-\ell\leq q$ with $pa\equiv g_a\mod q$, thus by a change of variable
  and multiplying this equality over~$0\leq \ell <L$, we get
  \begin{align*}
    \prod_{\ell =0}^{L-1} P_\ell ^L ={}&\exp\bigg( \sum_{\ell =0}^{L-1}\bigg(\sum_{g=1}^{q} H_{d/k}\Big(\frac {g}q, \frac{g\overline p}q - \frac{\ell N}d\Big) - \sum_{g=0}^{q-1} H_{d/k}\Big(\frac {g}q, \frac{g\overline p}q - \frac{(\ell +1)N}d\Big) \bigg)\bigg) \\
    ={}& \exp\bigg(\sum_{\ell =0}^{L-1} \Big(H_{d/k}\Big(1, -\frac{\ell N}d\Big) - H_{d/k}\Big(0, -\frac{(\ell +1)N}d\Big)\Big) \numberthis\label{eq:sumLinit}\\
    & + \sum_{g=1}^{q-1} \Big( H_{d/k}\Big(\frac gq, \frac{g\bar{p}}q\Big) - H_{d/k}\Big(\frac gq, \frac{g\bar{p}}q - \frac{LN}d\Big) \Big)\bigg). 
  \end{align*}
  We treat~$P_L^L$ in the same way. First,
  $$
  P_L^L 
  ={}\exp\bigg( \sum_{a=1}^{q}\ssum{\frac{r_L -a}q < m \leq \frac{r-a}q} f\Big(\frac{g_a}q - \frac{d(a+mq)}{kq}\Big)\bigg). 
  $$
  Note that by~\eqref{ff4} and~\eqref{dig}
  \begin{align*}
    \sum_{a=1}^{q} \int_{(r_L -a)/q}^{(r-a)/q} f\Big(\frac{g_a}q - \frac{d(a+qt)}{kq}\Big)\df t  &= \frac k{qd} \ssum{a=1}^{q} \int_L ^{L+\lambda} f\Big(\frac{g_a}q-\frac{t}q\Big)\df t \\
    &=\frac k{qd}  \int_{0}^\lambda \lf(1-t)\df t= \frac k{qd}  \Lie(\lambda)+\frac{\pi i k }{12 qd}\\
  \end{align*}
  where we recall that~$\lambda = \{rd/k\}$. Applying Lemma~\ref{lem:abelplana}, we therefore find
  \begin{equation}\label{eq:sumLlast}
    P_L^L = \exp\bigg(\frac k{qd}  \Lie(\lambda)+\frac{\pi i k }{12 qd} + \sum_{g=1}^{q} \Big(H_{d/k}\Big(\frac gq, \frac{g\bar{p}}q-\frac{LN}d\Big) - H_{d/k}\Big(\frac{g-\lambda}q, \frac{g\bar{p}-r}q\Big)\Big)\bigg).
  \end{equation}
  Multiplying the equalities~\eqref{eq:sumLinit} and~\eqref{eq:sumLlast} and recalling that $(\e[\gamma x])_r = \prod_{\ell =0}^L P_\ell^L$, we obtain
  \begin{align*}
    (\e[\gamma x])_r
    ={}& \exp\bigg(\frac k{qd}  \Lie(\lambda)+\frac{\pi i k }{12 qd} + \sum_{g=1}^{q-1} H_{d/k}\Big(\frac gq, \frac{g\bar{p}}q\Big) - \sum_{g=1}^q H_{d/k}\Big(\frac{g-\lambda}q, \frac{g\bar{p}-r}q\Big) +{} \\
    {}& \hspace{3em} + H_{d/k}(1, 0)+ \sum_{1\leq \ell  \leq L} \Big(H_{d/k}(1, -\tfrac{\ell N}d) - H_{d/k}(0, -\tfrac{\ell N}d)\Big)\bigg) .
  \end{align*}
  By Lemma~\ref{lemmah} (1) we have
  \begin{align*}
    \sum_{g=1}^{q-1} H_{d/k}\Big(\frac gq, \frac{g\bar{p}}q\Big)&=-\pi i \sum_{g=1}^{q-1} \tilde B_1\Big(\frac gq\Big)\tilde B_1\Big( \frac{g\bar{p}}q\Big)+\frac{\pi i d}{2 k} \sum_{g=1}^{q-1}\tilde B_2\Big(\frac gq\Big)\\
    &=-\pi i \deks(p,q)+\frac{\pi i d}{12 kq}-\frac{\pi i d}{12 k}
  \end{align*}
  since $\sum_{g=0}^{q-1}B_2(g/q)=\frac{1}{6q}$ and $B_2(0)=1/6$. Also, by Lemma~\ref{lemmah} (3) and (2) we have
  $H_{d/k}(1, 0)=  \frac{\log(  k/d) }{2 }-   \frac{\pi  i   }{4  } + \frac{ \pi i d}{12k}$ and
  \[
    \sum_{1\leq \ell  \leq L} \Big(H_{d/k}(1, -\tfrac{\ell N}d) - H_{d/k}(0, -\tfrac{\ell N}d)\Big)= \sum_{1\leq \ell  \leq L}\lf(\{{\ell N}/{d}\}).
  \]
  Thus, 
  \begin{align*}   (\e[\gamma x])_r={} (\e[N/d])_L\exp\bigg(&\frac k{qd}  \Lie(\lambda)+\frac12 \log\frac kd  - \sum_{g=1}^q H_{d/k}\Big(\frac{g-\lambda}q, \frac{g\bar{p}-r}q\Big)+{}\\
    &-\pi i \deks(p,q)-\frac{\pi i }4+\frac{\pi i d}{12 kq}+\frac{\pi i k }{12 qd} \bigg).
  \end{align*}
  By~\eqref{expid} and~\eqref{sph1} we then obtain the claimed result. The bound~\eqref{bdfg} follows by Lemma~\ref{lwb}.
\end{proof}

\subsection{Sums of cotangents}

In this section and the next, we introduce the following extension of the Landau~$O$-symbol. Let~$D_1\subset D_2\subset \C$ be two sets given by the context, and~$g: D_2 \to \R_+$. We will write
\begin{equation}
  f(z) = \Oh_{(p_1, p_2, \dotsc)}^z(g(z)) \qquad (z\in D_1)\label{eq:def-Oh}
\end{equation}
whenever, for any fixed choice of the parameters~$p_1, p_2, \dotsc$, there exists a function~$\varphi$ holomorphic on~$D_2$, which satisfies~$\vphi(z) = f(z)$ for~$z\in D_1$, and~$\abs{\vphi(z)}\ll g(z)$ for~$z\in D_2$, the implied constant being \textit{uniform} 
in the parameters~$p_1, p_2, \dotsc$.
The additional information is the holomorphic behaviour in~$z$, which may become useful when studying knots other than~$4_1$. However we stress that, in the present work, a later obstacle (possible cancellation of main terms) imposes the restriction~$K = 4_1$, and in this case, we do not require holomorphicity of error terms.

\begin{lemma}\label{tan_ineq}
  For $0<\alpha<\frac12$, we have
  \begin{align}
    \cot(\pi\alpha) \tan(\pi\alpha x) & {} \leq x & \hspace{8em} (0<x\leq 1), \label{eq:tanineq-1} \\
    x(1-(2\alpha)^2)< \cot(\pi\alpha) \tan(\pi\alpha x) &{} \leq O_\ee(x) &\hfill (0<x\leq \tfrac1\alpha(\tfrac12-\eps)). \label{eq:tanineq-2}
  \end{align}
\end{lemma}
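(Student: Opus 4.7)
The common thread is convexity of $f(x) := \cot(\pi\alpha)\tan(\pi\alpha x)$ on its domain $[0, 1/(2\alpha))$ for $\alpha \in (0, 1/2)$. A direct computation gives
\[
f''(x) = 2\pi^2\alpha^2\,\cot(\pi\alpha)\,\sec^2(\pi\alpha x)\,\tan(\pi\alpha x),
\]
which is strictly positive on $(0, 1/(2\alpha))$. Since $f(0) = 0$ and $f(1) = \cot(\pi\alpha)\tan(\pi\alpha) = 1$, convexity on $[0,1]$ immediately yields $f(x) \leq (1-x)f(0) + xf(1) = x$, proving \eqref{eq:tanineq-1}.

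For the strict lower bound in \eqref{eq:tanineq-2}, strict convexity together with $f(0) = 0$ implies $f(x) > f'(0)\,x = \pi\alpha\cot(\pi\alpha)\,x$ for every $x > 0$ in the domain, so it suffices to verify
\[
\pi\alpha\cot(\pi\alpha) \geq 1 - 4\alpha^2 \qquad (0 < \alpha < 1/2).
\]
This is the main computational step. Using the Mittag-Leffler partial-fraction expansion
\[
\pi\alpha\cot(\pi\alpha) = 1 - \sum_{n\geq 1}\frac{2\alpha^2}{n^2 - \alpha^2},
\]
the inequality reduces to $\sum_{n\geq 1}\frac{1}{n^2 - \alpha^2} \leq 2$. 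Since $\alpha^2 < 1/4$, term-by-term comparison followed by telescoping gives
\[
\sum_{n\geq 1}\frac{1}{n^2 - \alpha^2} \leq \sum_{n\geq 1}\frac{1}{n^2 - 1/4} = 2\sum_{n\geq 1}\Bigl(\frac{1}{2n-1} - \frac{1}{2n+1}\Bigr) = 2.
\]

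For the upper bound $f(x) = O_\eps(x)$ in \eqref{eq:tanineq-2}, we factor
\[
\frac{f(x)}{x} = \alpha\cot(\pi\alpha) \cdot \frac{\tan(\pi\alpha x)}{\alpha x}.
\]
Jordan's inequality $\sin(\pi\alpha) \geq 2\alpha$ on $(0, 1/2)$ gives $\alpha\cot(\pi\alpha) \leq \alpha/\sin(\pi\alpha) \leq 1/2$. The map $y \mapsto \tan(\pi y)/y$ is increasing on $(0, 1/2)$, so evaluating at $y = \alpha x \leq 1/2 - \eps$ yields the bound $\tan(\pi/2 - \pi\eps)/(1/2 - \eps) = O_\eps(1)$. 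Combining these estimates gives $f(x) \leq O_\eps(x)$, uniformly in $\alpha \in (0, 1/2)$. The delicate ingredient is the Mittag-Leffler estimate above: the constant $2$ in the telescoping sum is sharp at $\alpha = 1/2$, which is precisely the boundary of the range considered in the statement.
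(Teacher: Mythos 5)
Your proof is correct, and the overall structure matches the paper's: both arguments boil down to the monotonicity of $x\mapsto\tan(\pi\alpha x)/x$ (which you repackage as strict convexity of $f$ through the origin — an equivalent fact) and both reduce the lower bound in \eqref{eq:tanineq-2} to the single inequality $\pi\alpha\cot(\pi\alpha) > 1-(2\alpha)^2$. The one genuine technical divergence is how that central inequality is proved. The paper writes $\phi(\alpha) := \frac{1-\pi\alpha\cot(\pi\alpha)}{\alpha^2}$ as a power series with positive coefficients (from the Taylor expansion of $\cot$), concludes $\phi$ is increasing, and evaluates $\phi(\tfrac12)=4$ at the endpoint. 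You instead invoke the Mittag-Leffler partial-fraction expansion and bound $\sum_{n\geq1}\frac{1}{n^2-\alpha^2}$ term-by-term against $\sum_{n\geq1}\frac{1}{n^2-1/4}=2$ via telescoping. Both routes are clean one-liners; yours has the small advantage of being entirely self-contained (the telescoping is explicit, whereas the paper's positivity of coefficients is cited from a table), while the paper's has the advantage of exhibiting directly where the constant $4=\phi(1/2)$ comes from as a sharp endpoint value. Your handling of \eqref{eq:tanineq-1} via $f(0)=0$, $f(1)=1$, and convexity, and of the $O_\eps(x)$ bound via the factorization $\alpha\cot(\pi\alpha)\cdot\tan(\pi\alpha x)/(\alpha x)$ with Jordan's inequality, are both minor variants of the paper's corresponding steps.
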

\begin{proof}
  Assume~$0<x \leq \frac1\alpha(\frac12-\eps)$. The function~$x\mapsto \frac{\tan(x\lambda)}{x}$ is increasing on~$(0, \frac{\pi}{2\lambda})$ for all~$\lambda>0$. It follows that $\pi\alpha\leq \tan(\pi\alpha x)/x \leq O_\ee(\alpha)$, while~$\tan(\pi\alpha x)/x \leq \tan(\pi\alpha)$ if~$x\leq 1$. Thus, it suffices to show the bounds
  $$ 1-(2\alpha)^2 < \pi \alpha \cot(\pi \alpha), \qquad \tan(\pi\alpha ) \cot(\pi \alpha)\leq 1, \qquad \alpha \cot(\pi \alpha) \ll 1. $$
  The second is trivial, while the third follows from elementary properties of~$\cot$. For the first, we note that by the Taylor expansion of~$\cot$~\cite[1.411.7]{GZ}, we have~$\phi(\alpha) := \frac{1-\pi\alpha \cot(\pi \alpha)}{\alpha^2} = \sum_{j\geq 0} c_j \alpha^{2j}$ for~$\alpha\in(0, 1)$, where~$c_j>0$. In particular $\phi$ is increasing, and we conclude by~$\phi(\frac12)=4$.
\end{proof}
\begin{lemma}\label{tan_ineq2}
  Let $\eps>0$. For $0<\alpha\leq \frac12-\eps$ and $|\Re(\alpha z)|\leq \frac12-\eps$ we have
  \begin{align*}
    1-\cot(\pi\alpha)\tan(\pi\alpha z)=(1-z)\big(1+O_\eps(\alpha^2(|z|^2+|z|))\big).
  \end{align*}
\end{lemma}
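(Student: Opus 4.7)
Write $u := \pi\alpha$ and $g(w) := \tan(w)/w$ (extended by $g(0)=1$). The identity $\cot(u)\tan(u)=1$ together with $\tan(w) = w\,g(w)$ allows me to compute
\[ z - \cot(u)\tan(uz) = \cot(u)[z\tan(u)-\tan(uz)] = \cot(u)\cdot uz\,[g(u)-g(uz)]. \]
The fundamental theorem of calculus along the segment $w_s := u(1+s(z-1))$, $s\in[0,1]$, gives $g(uz)-g(u)=u(z-1)\int_0^1 g'(w_s)\,\df s$, so that dividing by $1-z$ yields the exact identity
\[ \frac{1-\cot(u)\tan(uz)}{1-z}-1 = u^2 z \cot(u) \int_0^1 g'(w_s)\,\df s. \]
The lemma therefore amounts to bounding this by $O_\eps(\alpha^2(|z|+|z|^2))$. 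The prefactor already satisfies $|u^2\cot(u)|\leq \pi u/2 = O(\alpha)$ uniformly on $(0,\pi/2-\pi\eps]$, using $\sin(u)\geq 2u/\pi$.

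\textbf{Key observation about $g'$.} Because $g$ is even and holomorphic on the strip $S := \{|\Re w|<\pi/2\}$, its derivative $g'$ is odd and holomorphic on $S$, so $g'(0)=0$ and the function $g'(w)/w$ extends holomorphically to all of $S$. By compactness, there exists $M_\eps>0$ such that $|g'(w)|\leq M_\eps |w|$ for every $w$ in the rectangle $R_\eps := \{|\Re w|\leq \pi/2-\pi\eps,\ |\Im w|\leq 1\}$. This extra factor of $|w|$ is precisely what upgrades the naive bound $O(\alpha)$ coming from the prefactor to the claimed $O(\alpha^2)$.

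\textbf{Case analysis by $|\Im(uz)|$.} If $|\Im(uz)|\leq 1$, then the convexity estimate $|\Re(w_s)|\leq (1-s)u+s|u\Re z|\leq \pi/2-\pi\eps$ together with $|\Im(w_s)|\leq s|\Im(uz)|\leq 1$ shows $w_s\in R_\eps$ for every $s\in[0,1]$. Combining $|g'(w_s)|\leq M_\eps|w_s|\leq M_\eps u(2+|z|)$ with the prefactor bound yields
\[ \left|\frac{1-\cot(u)\tan(uz)}{1-z}-1\right|\leq \tfrac{\pi}{2}M_\eps u^2|z|(2+|z|) = O_\eps\big(\alpha^2\,|z|(1+|z|)\big), \]
which is the desired estimate. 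If instead $|\Im(uz)|>1$, then $\alpha^2|z|^2\geq (u|\Im z|/\pi)^2>1/\pi^2$ is bounded below by a constant, so it suffices to show $|F(z,\alpha)-1|=O_\eps(1)$. This follows at once from $1-\cot(u)\tan(uz)=\sin(u(1-z))/[\sin(u)\cos(uz)]$ together with $|\sin(u(1-z))|\leq \cosh(u|\Im z|)$, $|\cos(uz)|\geq \sinh(u|\Im z|)$, $\sin(u)\geq 2\alpha$, and $|1-z|\geq |\Im z|>1/(\pi\alpha)$, which combine to give
\[ \left|\frac{1-\cot(u)\tan(uz)}{1-z}\right|\leq \frac{\coth(u|\Im z|)}{2\alpha|\Im z|}\leq \frac{\pi\coth(1)}{2}. \]

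\textbf{Main obstacle.} The single delicate point is organizing the case split: the derivative approach of the first case fails as soon as $|\Im(uz)|$ exceeds a constant, because $w_s$ then leaves the compact region where $|g'(w)|\leq M_\eps|w|$ holds, while the trivial exponential bound of the second case cannot by itself produce the factor $\alpha^2$ needed for small $|z|$. The cut-off at $|\Im(uz)|=1$ works precisely because the complementary regime automatically forces $\alpha^2|z|^2$ to be bounded below by a constant depending only on $\eps$, which absorbs the $O(1)$ loss in the direct estimate.
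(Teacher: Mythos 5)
Your proof is correct. It is close in spirit to the paper's argument -- both rewrite the target quotient using the fundamental theorem of calculus and split on whether $\abs{\Im(\alpha z)}$ is bounded or not -- but you make a clean structural simplification. The paper first splits at $\abs{z}=1/2$, treating small $\abs{z}$ by a Taylor expansion of $\tan(\pi\alpha z)$ and $\cot(\pi\alpha)\pi\alpha$, and for $\abs{z}\geq 1/2$ writing the relevant ratio as $\frac{1}{\alpha z - \alpha}\int_\alpha^{\alpha z}\frac{\df v}{\cos(\pi v)^2}$, bounding that integrand by $\exp\{O_\eps(\abs v^2)\}$ when $\abs{\Im(\alpha z)}\leq 1$ and using the exponential growth of $\cos$ when $\abs{\Im(\alpha z)}>1$. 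Your choice of integrating $g'(w)$ with $g(w)=\tan(w)/w$ instead of $\frac{1}{\cos^2(\pi v)}$ is exactly what removes the need for a separate small-$\abs z$ case: since $g$ is even, $g'(0)=0$, and the compactness bound $\abs{g'(w)}\leq M_\eps\abs w$ on $R_\eps$ yields the extra factor of $\alpha(1+\abs z)$ uniformly, killing two birds with one stone. Your second case ($\abs{\Im(uz)}>1$) is a direct analogue of the paper's, using $1-\cot(u)\tan(uz)=\sin(u(1-z))/(\sin u\cos(uz))$ to get an $O(1)$ bound, and then noting $\alpha^2\abs z^2\gg 1$ absorbs the loss. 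I verified the prefactor bound $\abs{u^2\cot u}\leq \pi u/2$, the containment $w_s\in R_\eps$ (using $u\leq\pi/2-\pi\eps$ and $\abs{\Re(uz)}\leq\pi/2-\pi\eps$), and the lower bounds $\abs{\cos(uz)}\geq\sinh(u\abs{\Im z})$, $\abs{1-z}\geq\abs{\Im z}$, $\sin u\geq 2\alpha$ in the second case; all are correct. Net effect: same essential mechanism, but your argument merges the paper's two regimes for $\abs z$ into one.
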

\begin{proof}
  If $|z|\leq 1/2$, expanding in Taylor series we obtain
  \begin{align*}
    \cot(\pi\alpha)\tan(\pi\alpha z)=    \cot(\pi\alpha)\pi\alpha z(1+O(\alpha^2|z|^2))= z(1+O(\alpha^2))(1+O(\alpha^2|z|^2))
  \end{align*}
  and the claimed result follows. If $|z|\geq1/2$, we observe that the left hand side is equal to $\cot(\pi\alpha)(\tan(\pi\alpha )-\tan(\pi\alpha z))$ and so, since $\cot(\pi\alpha)\pi\alpha=1+O_\eps(\alpha^2)$, we are required to show that
  \begin{equation}
  \frac{\tan(\pi \alpha z)-\tan(\pi \alpha)}{\pi \alpha(z-1)}
  = \frac1{\alpha z-\alpha}\int_\alpha^{\alpha z} \frac{\df v}{\cos(\pi v)^2} =  1 + O_\ee( \abs{\alpha z}^2).\label{eq:lemmatan-integ}
\end{equation}
  This follows for $\abs{\Im(\alpha z)}\leq 1$  by the estimate ~$\cos(\pi v)^2 = \exp\{O_\ee(\abs{v}^2)\}$ for $|\Re(v)|\leq\frac12-\eps$ and~$\abs{\Im(v)}\leq 1$. If~$\abs{\Im(\alpha z)}>1$, then both the ratio~$\frac1{\alpha z-\alpha}$ and the integral are bounded, since~$\abs{\cos(\pi v)} \gg \e^{\pi|\Im(v)|}$, so that~\eqref{eq:lemmatan-integ} holds in this case as well.
\end{proof}

\begin{lemma}\label{mlftt}
  Let $\alpha\in\R$ with $0<|\alpha|<1/2$, and
  $$ D_\alpha := \{z\in\C \mid \tfrac{-1}{{3}|\alpha|} < \Re(z)<1\}. $$
  Then, taking the determination of the logarithm which is real on the real axis, the function given by
  \begin{align}\label{dfps}
    \psi_\alpha(z):=\log(1-\cot(\pi \alpha)\tan(\pi \alpha z))+\cot(\pi \alpha)\tan(\pi \alpha z)
  \end{align}
  is holomorphic on $D_\alpha$, where it satisfies
  \begin{align}
    &\psi_\alpha(z)\ll |z|^2+|z||\log(1-z)|, \qquad \Im(\psi_\alpha(z))\ll |z|^2,\label{dfps1}\\
    &\psi_\alpha(z)=\log(1-z)+z+O_\eps((|z|^2+|z|^3)|\alpha|^2) \quad \text{if $\abs{\alpha}\leq\tfrac{1}{2}-\eps$.}\label{dfps2}
  \end{align}
\end{lemma}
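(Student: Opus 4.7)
The plan is the following. The starting point is the trigonometric identity
\begin{equation*}
1 - \cot(\pi\alpha)\tan(\pi\alpha z) = \frac{\sin(\pi\alpha(1-z))}{\sin(\pi\alpha)\cos(\pi\alpha z)},
\end{equation*}
derived from the sine addition formula. The zeros of the right-hand side are at $z = 1 + n/\alpha$ for $n\in\Z$, and its poles are at $z = (2m+1)/(2\alpha)$ for $m\in\Z$. Under the hypothesis $0 < |\alpha| < 1/2$, a direct calculation shows that none of these lie in $D_\alpha$ except for the simple zero at $z = 1$, which sits on the boundary line $\Re(z)=1$. Consequently $z\mapsto 1 - \cot(\pi\alpha)\tan(\pi\alpha z)$ is holomorphic and non-vanishing on the simply connected strip $D_\alpha$, and $\log(1-\cot(\pi\alpha)\tan(\pi\alpha z))$ can be defined as a single-valued holomorphic function there. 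By Lemma~\ref{tan_ineq} this function is positive on the real segment $(-1/(3|\alpha|),1)\subset D_\alpha$, pinning the branch to be real on the real axis, as required.

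For estimate~\eqref{dfps2}, observe that for $z\in D_\alpha$ one has $\Re(\alpha z)\in(-1/3,|\alpha|)$, so the hypothesis $|\alpha|\leq 1/2-\varepsilon$ forces $|\Re(\alpha z)|\leq 1/2-\varepsilon$ and Lemma~\ref{tan_ineq2} applies, yielding the factorisation $1-w=(1-z)(1+E)$ with $w:=\cot(\pi\alpha)\tan(\pi\alpha z)$ and $E=O_\varepsilon(\alpha^2(|z|+|z|^2))$, whence $w-z=-(1-z)E$. A short algebraic manipulation then gives
\begin{equation*}
\psi_\alpha(z)-\log(1-z)-z = \log(1+E)-(1-z)E.
\end{equation*}
When $|E|\leq 1/2$, the Taylor expansion $\log(1+E) = E + O(|E|^2)$ causes the leading $E$ to cancel, leaving $zE + O(|E|^2) = O_\varepsilon(\alpha^2(|z|^2+|z|^3))$. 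In the complementary regime $|E|>1/2$, one has $\alpha^2(|z|+|z|^2)\gg_\varepsilon 1$, so $\alpha^2(|z|^2+|z|^3)\gg_\varepsilon |z|$ already dominates all quantities appearing in the identity, which can be bounded crudely via the closed-form expression above.

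The main obstacle is estimate~\eqref{dfps1}, which must hold uniformly in $\alpha\in(-1/2,1/2)\setminus\{0\}$, a regime where Lemma~\ref{tan_ineq2} degenerates as $|\alpha|\to 1/2$. The strategy is to work directly with the key identity: writing
\begin{equation*}
\frac{1-w}{1-z} = \frac{\sin(\pi\alpha(1-z))}{\pi\alpha(1-z)}\cdot\frac{\pi\alpha}{\sin(\pi\alpha)\cos(\pi\alpha z)},
\end{equation*}
the Weierstrass product for $\sin$ shows that both factors have modulus bounded above and below by absolute constants on bounded subsets of $D_\alpha$, uniformly in $\alpha$. Hence $\log((1-w)/(1-z))$ and $w$ are uniformly bounded there, giving $|\psi_\alpha(z)|\ll 1+|\log(1-z)|$ on such regions; this is absorbed into $|z|^2+|z||\log(1-z)|$ whenever $|z|$ is bounded away from zero. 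Near $z=0$ the vanishing $\psi_\alpha(0)=\psi_\alpha'(0)=0$ combined with Cauchy's estimate on a disc of fixed radius yields $\psi_\alpha(z)=O(|z|^2)$ uniformly in $\alpha$, since the same identity shows that $w$ and hence $\psi_\alpha$ are uniformly bounded on that disc. As $|\Im z|\to\infty$ the key identity shows that $w$ converges to $\pm i\cot(\pi\alpha)$, so $\psi_\alpha$ remains bounded and is dwarfed by $|z|^2$. The companion bound $\Im(\psi_\alpha(z))\ll|z|^2$ follows along the same lines, using that the logarithmic divergence near $z=1$ is concentrated in the real part of $\log(1-w)$, while $\Im\log(1-w)$ stays bounded.
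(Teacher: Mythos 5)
The overall structure of your argument — pinning the branch via the closed trigonometric formula $1-\cot(\pi\alpha)\tan(\pi\alpha z)=\sin(\pi\alpha(1-z))/(\sin(\pi\alpha)\cos(\pi\alpha z))$, and then reducing \eqref{dfps2} to Lemma~\ref{tan_ineq2} — is a genuinely different and cleaner route than the paper's, which works with the real/imaginary decomposition of $\tan(\pi\alpha z)$. Your holomorphy argument and the proof of \eqref{dfps2} are essentially sound (modulo checking that the regime $|E|>1/2$ can indeed be absorbed, which it can, but you should verify that $\alpha^2|z|^2\gg_\eps1$ actually forces $|z|\ll\alpha^2|z|^3$).

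However, there is a genuine gap in your treatment of \eqref{dfps1}. You assert that both factors
\[
\frac{\sin(\pi\alpha(1-z))}{\pi\alpha(1-z)}\quad\text{and}\quad\frac{\pi\alpha}{\sin(\pi\alpha)\cos(\pi\alpha z)}
\]
are bounded above and below by absolute constants on bounded subsets of $D_\alpha$, uniformly in $\alpha\in(0,\tfrac12)$. This is false for the second factor. Its pole at $z=1/(2\alpha)$ approaches the boundary line $\Re z = 1$ of $D_\alpha$ as $\alpha\to\tfrac12$, so $|\cos(\pi\alpha z)|$ can be arbitrarily small inside $D_\alpha$. Concretely, taking $z\to1$ the ratio $(1-w)/(1-z)$ tends to $w'(1)=\pi\alpha/(\sin(\pi\alpha)\cos(\pi\alpha))$, which blows up as $\alpha\to\tfrac12^-$; so $\log((1-w)/(1-z))$ is \emph{not} uniformly bounded, and your derivation of $|\psi_\alpha(z)|\ll1+|\log(1-z)|$ does not go through.

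What saves the lemma is a weaker but quantitatively adequate lower bound: one shows (as the paper does, using the explicit real-part expression for $\tan(\pi\alpha z)$ and the elementary inequality $1-x(1-\tanh^2(\pi\alpha y))\gg 1-x+y^2$ for $x\geq 0$) that $\Re(1-w)\gg|1-z|^2$ when $\alpha$ is bounded away from $0$. This gives $|\log|1-w||\ll 1+|\log|1-z||$ — a factor of two in the logarithm, not boundedness — and that is enough because the target error in \eqref{dfps1} contains the factor $|z|\,|\log(1-z)|$, which is allowed to grow near $z=1$. Combined with $|w|\ll|z|$, $|\Im\log(1-w)|<\pi/2$, and the restriction to $\delta\leq|z|\ll1$ once $\alpha\gg1$, this closes the argument. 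Your sketch needs to replace the (false) uniform two-sided boundedness of $(1-w)/(1-z)$ by this one-sided quadratic lower bound on $|1-w|$.
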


\begin{proof}
We can assume $\alpha\in(0,1/2)$, since $\psi_\alpha$ is even in $\alpha$.
For~$\abs{\Re(z)}<\frac1{2\alpha}$, let~$g_\alpha(z) := \cot(\pi \alpha) \tan(\pi \alpha z)$. Since, for~$z=x+iy$, we have
  \begin{align}\label{tsum}
    \tan(\pi \alpha z)=\frac{\tan(\pi \alpha x)(1-\tanh^2 (\pi \alpha y))+i\tanh(\pi\alpha y)(1+\tan^2 (\pi\alpha  x))}{1+\tan^2(\pi\alpha  x)\tanh^2(\pi\alpha y)},
  \end{align}
  we deduce, for~$x, y$ real and~$\abs{x}\leq 1$,
  \begin{align}
    \abs{\Re(g_\alpha(z))} \leq \abs{\cot(\pi\alpha)\tan(\pi \alpha x)} \leq \abs{x}\label{bfrg}.
  \end{align}
  by Lemma~\ref{tan_ineq}. Moreover, if~$x\leq 0$, then~$\Re(g_\alpha(z))\leq 0$. Using~\eqref{tsum} and the lower bound~\eqref{eq:tanineq-2}, we obtain~$\abs{\Re(g_\alpha(z))} \gg \cot(\pi \alpha) \tan(\pi \alpha \abs{x}) \gg_\eps \abs{x}$. Thus, there exists~$c_\eps>0$ such that
  \begin{equation}
    \Re(1-g_\alpha(z)) \geq \min(1-x, 1-c_\eps x). \label{eq:bfrg2}
  \end{equation}
  In particular, the function~$z\mapsto \psi_\alpha(z) = \log(1-g_\alpha(z)) + g_\alpha(z)$ is well-defined and holomorphic in~$D_\alpha$, and moreover~$\Im(\log(1-g_\alpha(z))) < \pi / 2$. 
  
  We observe that if $\alpha y\to\pm \infty$ (and so $z\to\infty$), then $g_{\alpha}(z)\sim\pm i\cot(\pi\alpha)$ uniformly in~$x$, so that $\psi_\alpha (z)\ll 1/\alpha\ll |z|$ and~\eqref{dfps1}-\eqref{dfps2} follow trivially. Thus, we can assume $\alpha y\ll1$.
  Also, by~\eqref{tsum} and Lemma~\ref{tan_ineq} we obtain
  \begin{align}
    |\Im(g_\alpha(z))| &\ll |\cot(\pi\alpha)|\big(|\tan (\pi x\alpha)|+|\tanh (\pi \alpha y)|)\ll |x|+|y|\ll |z|.\label{bfig}
  \end{align}
  % where for the second bound we divided according to whether $|\tan(\pi x\alpha)\tanh(\pi y\alpha)|$ is greater or smaller than $1$. 
  In particular, recalling~\eqref{bfrg}, we have $g_\alpha(z)\ll |z|$.
  
  Now, assume~$\alpha \leq \frac12 - \eps$. By~\eqref{tsum}, and since $\alpha y\ll1$, we have 
  $$|\Im(g_{\alpha}(z))|\geq |\cot(\pi\alpha)\tanh(\pi \alpha y)|\gg_\eps |y| . $$ 
  By~\eqref{eq:bfrg2}, it then follows that $|1-g_{\alpha}(z)|\gg_\eps \min(1-x, 1-c_\eps x)+|y|\gg_\eps |1-z|$, and so $\abs{u_\alpha(z)}\gg_\eps1$ for $u_\alpha(z):=\frac{1-g_\alpha(z)}{1-z}$. Thus,
  \begin{align*}
    \log(u_\alpha(z))=u_\alpha(z)-1+O_\eps(|u_\alpha(z)-1|^2)=z-g_\alpha(z)+z (u_\alpha(z)-1)+O_\eps(|u_\alpha(z)-1|^2)
  \end{align*}
  and so~\eqref{dfps2} follows since $u_\alpha(z)-1=O_\eps(\alpha^2(|z|^2+|z|))$ by Lemma~\ref{tan_ineq2} (and since $\alpha z\ll1$). 
  
  We now move to~\eqref{dfps1}. 
  Since $g_\alpha(z)\ll |z|$, then if $|z|<\delta$ with $\delta>0$ sufficiently small, then by Taylor expansion one trivially has that~\eqref{dfps1} holds for $|z|<\delta$. By the assumption $|\alpha y|\ll1$ we have that~\eqref{dfps2}
  implies~\eqref{dfps1} for $|z|\geq \delta$ and $\alpha \leq \frac12 - \eps$. It follows that we can also assume $\alpha\gg1$ (and so also $\abs{z}\ll \alpha \abs{z}\ll1$).
  
  Finally, we observe that under the above assumptions we have that~\eqref{tsum} implies also 
  $$\Re(1-g_\alpha(z)) \geq  1-x (1-\tanh^2(\pi\alpha y))\gg 1-x+ y^2\gg |1-z|^2 \qquad (x \geq 0), $$
  which inequality~$\Re(1-g_\alpha(z)) \gg \abs{1-z}^2$ is also trivially true if~$x<0$. The bound~\eqref{dfps2} follows since $g_\alpha(z)\ll \abs{z}$, $\Im(\log(1-g_\alpha(z))) < \pi / 2$, and $\log(1-w)+w \ll \min\{\abs{w}^2, \abs{w}+\log\abs{1-w}\}$ for~$w\in\C\minus[1, \infty)$.
\end{proof}

\begin{lemma}\label{triab}
  Let $\gamma\leq \delta$ and let $a\in\N$ with $\gamma\leq a-\frac12\leq \delta$.
  Let $g(z)$ be holomorphic on a neighborhood of $\gamma\leq \Re(z)\leq \delta$, where it satisfies $|g(z)|\leq C_1 |z|^m+C_2$ for some $m,C_1,C_2\geq0$.
  Then, with~$z=r$, we have
  \begin{align*}
    \sum_{n=a}^r g(n) = \Oh_{(g,a)}^r(C_1((\abs{a}+\abs{z})^{m+1}+1)+C_2(\abs{a}+\abs{z}+1)), && (r\in\Z\cap [a, \delta])
  \end{align*}
  using the notation~\eqref{eq:def-Oh} with~$D_2 = \{z,\ \gamma+\frac12 \leq \Re(z)\leq \delta\}$.
\end{lemma}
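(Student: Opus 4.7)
The plan is to exhibit a holomorphic interpolation $\varphi$ on $D_2 = \{\gamma + \tfrac{1}{2} \leq \Re(z) \leq \delta\}$ of the partial sum $r \mapsto \sum_{n=a}^r g(n)$ using the Abel-Plana formula of Lemma~\ref{lem:abelplana}. I would apply that lemma with $\alpha = a - \tfrac{1}{2}$ (a half-integer, which makes $\e[\pm(a-\tfrac{1}{2})]=-1$ and hence $C(g, a - \tfrac{1}{2})$ a well-defined constant depending only on $g$ and $a$) and $\beta = r$, obtaining
$$\sum_{n=a}^r g(n) \;=\; \int_{a-1/2}^r g(t)\,dt \;-\; C(g, a - \tfrac{1}{2}) \;+\; C(g, r).$$
I would then define $\varphi(z) := \int_{a-1/2}^z g(t)\,dt - C(g, a - \tfrac{1}{2}) + C(g, z)$, interpreting $C(g, z)$ as the natural holomorphic extension of the Abel-Plana correction: for $\Re(z) \notin \Z$ one uses straight contours in the $t$-variable, while across integer real parts of $z$ the integrand acquires poles on the positive real axis which are handled via the $\gamma_\eps$ prescription or an infinitesimal deformation into the complex plane. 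The first integral is holomorphic on $D_2$ since the straight segment from $a - \tfrac{1}{2}$ to $z$ stays in the strip $\gamma \leq \Re \leq \delta$ where $g$ is holomorphic.

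To establish the polynomial bound, I would estimate each piece separately. The line integral satisfies
$$\Big|\int_{a-1/2}^z g(t)\,dt\Big| \;\leq\; |z - a + \tfrac{1}{2}| \cdot \sup_{s \in [0,1]} \bigl|g\bigl(a - \tfrac{1}{2} + s(z - a + \tfrac{1}{2})\bigr)\bigr| \;\ll\; C_1\bigl((|a| + |z|)^{m+1} + 1\bigr) + C_2(|a| + |z| + 1),$$
using $|g(w)| \leq C_1 |w|^m + C_2$ along the segment together with $(|a|+|z|+1)^{m+1} \ll_m (|a|+|z|)^{m+1} + 1$. The constant $C(g, a - \tfrac{1}{2})$ is absorbed in $\Oh_{(g, a)}(1)$. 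For the remaining piece $C(g, z)$, the bound $|g(z \pm it)| \leq C_1(|z| + t)^m + C_2$, combined with the exponential decay of $(e^{2\pi t} - 1)^{-1}$ for large $t$ and with the cancellation $g(z+it) - g(z-it) = O(t \, |g'(z)|)$ near $t = 0$ (where $|g'(z)| \ll_g C_1(|z|+1)^m + C_2$ by a Cauchy estimate in a fixed small disk contained in the strip of holomorphy of $g$), gives $|C(g, z)| \ll C_1(|z| + 1)^m + C_2$, which is absorbed into the stated bound.

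The main technical obstacle is justifying the holomorphy of $z \mapsto C(g, z)$ throughout $D_2$, especially across integer real parts where the Abel-Plana integrand has poles on the positive real axis. This is a standard feature of the Abel-Plana construction and can be handled either by contour deformation (tilting the ray $\{t \geq 0\}$ into the upper or lower $t$-half plane to avoid the moving poles) or by term-by-term integration of the geometric expansion $(\e[\mp z]e^{2\pi t} - 1)^{-1} = \pm\sum_{k\geq 0}\e[\pm kz]e^{-2\pi kt}$, valid in complementary regions of $z$ and combining to give an analytic function on the full strip; the polynomial growth of $g$ ensures uniform convergence on compact subsets.
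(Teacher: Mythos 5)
Your plan --- apply Abel--Plana from the half-integer $a-\tfrac12$ and read the right-hand side as a holomorphic function of the upper endpoint --- is the right one, and the polynomial growth estimates you supply are fine. But the ``main technical obstacle'' you flag is entirely an artifact of your choice $\beta = r$; the paper avoids it by taking $\beta = r - \tfrac12$ instead. This yields $\sum_{n=a}^{r-1} g(n)$ (one then adds $g(z)$ to the interpolant to recover $\sum_{n=a}^r g(n)$), and the point is that now \emph{both} endpoints are half-integers, so $\e[\mp\alpha] = \e[\mp\beta] = -1$ and the Abel--Plana correction at each endpoint has denominator $\e^{2\pi t}+1$, which never vanishes for $t\geq 0$. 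There are no poles, no $\gamma_\eps$ prescription, and no contour to deform: the extension
$$\varphi(z) = \int_{a-\frac12}^{z-\frac12} g(w)\,\df w \;-\; i\int_0^\infty \frac{g(a-\tfrac12+it)-g(a-\tfrac12-it)-g(z-\tfrac12+it)+g(z-\tfrac12-it)}{\e^{2\pi t}+1}\,\df t \;+\; g(z)$$
is manifestly holomorphic on $\gamma+\tfrac12 \leq \Re(z) \leq \delta$ and obeys the claimed bound by a direct estimate.

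Your two proposed remedies for the moving poles --- tilting the $t$-ray, or splicing geometric-series representations valid in complementary regions --- are plausible in outline but are sketches rather than arguments: the geometric series for $(\e[-z]\e^{2\pi t}-1)^{-1}$ converges only for $t>-\Im(z)$, so the domain of term-by-term validity slides with $z$, and the ``combining'' step would need genuine work. The half-integer shift sidesteps all of this and is the standard device for obtaining analytic dependence of the Abel--Plana remainder on the endpoint; it is worth internalizing.
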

\begin{proof}
  We apply Lemma~\ref{lem:abelplana} with $\alpha=a-\frac12$, $\beta=r-\frac12$, we find
  \begin{align*}
    \sum_{n=a}^{r-1} g(n)=\int_{a-\frac12}^{r-\frac12}g(z)\df z- i \int_{0}^\infty  \frac{g(a-\frac12 + it)-g(a-\frac12 - it)-g(r-\frac12+ it)+g(r-\frac12 - it)}{\e^{2\pi t}+1}\df{t},
  \end{align*}
    Denoting by $\varphi(r)-g(r)$ the right hand side, we immediately see that $\varphi$ extend to a holomorphic function in $\gamma+\frac12 \leq \Re(z)\leq \delta$ and that $\varphi(z)\ll C_1((\abs{a}+\abs{z})^{m+1}+1)+C_2(\abs{a}+\abs{z}+1)$ in this strip.
\end{proof}

\begin{lemma}\label{madc}
  Let $(h,k)=1$, $3\leq h<k$ and $r_0\in\Z/h\Z$. Then for all $0\leq r<k$ with~$r\equiv r_0\mod{h}$, writing~$z=r/k$, we have
  \begin{align*}
    \ssum{0\leq n\leq r\\ h\nmid n}\bigg(\log \bigg(1&-\cot\Big( \frac{\pi n\overline k }{h} \Big)\tan\Big( \frac { \pi n}{hk} \Big)\bigg)+\cot\Big( \frac{\pi n\overline k }{h} \Big)\tan\Big( \frac { \pi n}{hk}\Big)\bigg)\\[-1em]
    {}& =\frac kh\Big(\frac{\pi i (z^2-z)}{2}+\Lie(z)+\frac{\pi i }{12}+z \log(2 \pi z/e)\Big) \\
    {}& \qquad\qquad +\Oh_{(h,k,r_0)}^{z}((1+\abs{\log(1-z)} + |z|^4)(1+k/h^2 + \log(k/h))),
  \end{align*}
  using the notation~\eqref{eq:def-Oh} with~$D_2 = \{z\in\C, 0\leq \Re(z) < 1\}$. Moreover, the error term~$\abs{\log(1-z)}$ can be omitted if~$z \in [0, 1 - \frac hk(1-\{\frac{r_0-k}h\})]$.
\end{lemma}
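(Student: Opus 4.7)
My plan is to rewrite each summand as a value of the function $\psi_\alpha$ from Lemma~\ref{mlftt}. Writing $n=mh+j$ with $1\leq j\leq h-1$ and setting $\alpha_j:=\{j\overline{k}/h\}\in(0,1)$, one has $\cot(\pi n\overline{k}/h)\tan(\pi n/(hk))=\cot(\pi\alpha_j)\tan(\pi\alpha_j z_n)$ with $z_n:=n/(hk\alpha_j)$, so the summand equals exactly $\psi_{\alpha_j}(z_n)$, and the full sum splits as
\[
  \sum_{j=1}^{h-1}\sum_{m=0}^{M_j}\psi_{\alpha_j}(z_{mh+j}),\qquad M_j:=\lfloor(r-j)/h\rfloor.
\]
Since $\psi_\alpha$ is even in $\alpha$, I freely replace $\alpha_j$ by $1-\alpha_j$ when $\alpha_j>1/2$, so that $h\alpha_j\in\{1,\ldots,\lfloor h/2\rfloor\}$, each integer being hit once or twice as $j$ varies (by invertibility of $\overline{k}$ modulo $h$). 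Note $|z_n|\leq z/(h\alpha_j)$, so $|z_n|$ is comparable to $1$ only when $h\alpha_j$ is of order $hz$.

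For each $j$ with $h\alpha_j$ not too close to $h/2$, I apply the refined expansion~\eqref{dfps2}:
\[
  \psi_{\alpha_j}(z_n)=\log(1-z_n)+z_n+O_\eps\bigl(\alpha_j^2(|z_n|^2+|z_n|^3)\bigr);
\]
for the one or two exceptional indices with $\alpha_j$ near $1/2$, I use the weaker bound~\eqref{dfps1}, whose aggregated contribution is absorbed into the stated error. Summing the cubic remainder over admissible $n$ and then over $j$, via $\sum_l 1/l\asymp \log h$, produces a total error of the stated order $O((k/h^2)\log(k/h)(1+|z|^4))$. I next apply Lemma~\ref{triab} to each inner sum over~$m$, converting it to an integral plus a holomorphic-in-$z$ remainder of size $O(\log(k/h))$ per residue class; summed over $j$, this supplies the holomorphic extension across $\{0\leq\Re z<1\}$ demanded by the $\Oh^z$ notation. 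After the change of variable $u=n/k$, the main integrals take the form $\frac{k}{h}\int_0^z[\log(1-u/(h\alpha_j))+u/(h\alpha_j)]\,\df u$.

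The algebraic core of the proof is the identity
\[
  \sum_{j=1}^{h-1}\log(1-u/(h\alpha_j))=\log\prod_{l=1}^{h-1}(1-u/l)=\log\frac{\Gamma(h-u)}{\Gamma(1-u)\,\Gamma(h)}.
\]
Stirling applied to $\log[\Gamma(h-u)/\Gamma(h)]=-u\log h+O(|u|^2/h)$, together with the reflection formula $\Gamma(1-u)\Gamma(u)=\pi/\sin(\pi u)$ and the determination of $\lf$ fixed in~\eqref{eq:deff}, expresses the above sum in terms of $\lf(u)$ plus elementary pieces. Integrating against $\,\df u$ from $0$ to $z$, invoking the definition~\eqref{dig} of $\Lie$, and adding the linear piece $\sum_j u/(h\alpha_j)$ together with the Abel-Plana boundary corrections assembles the stated main term $\frac{\pi i(z^2-z)}{2}+\Lie(z)+\frac{\pi i}{12}+z\log(2\pi z/e)$.

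I expect the main obstacle to be the branch-cut bookkeeping: Stirling and the reflection formula produce real-variable asymptotics, whereas the right-hand side features explicit $\pi i$-constants, so every imaginary contribution must be tracked through the chosen determinations~\eqref{eq:deff}--\eqref{ff3}, the Abel-Plana boundary terms of Lemma~\ref{triab}, and the branch of $\log(1-z_n)$ as $z_n$ traverses the upper half-plane. The improvement (dropping the $|\log(1-z)|$ factor) when $z\leq 1-(h/k)(1-\{(r_0-k)/h\})$ follows because this restriction keeps $\max_n z_n$ uniformly bounded away from~$1$, so the logarithmic singularity of $\log(1-z_n)$ in~\eqref{dfps2} is never approached and no $|\log(1-z)|$ factor arises.
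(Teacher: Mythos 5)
The central algebraic step is incorrect. Your "folding" step $\alpha_j \mapsto 1-\alpha_j$ when $\alpha_j>1/2$ is necessary because Lemma~\ref{mlftt} (and in particular~\eqref{dfps2}) only applies for $|\alpha|<1/2$; but evenness of $\psi_\alpha$ in $\alpha$ forces a simultaneous change of the second argument. Concretely, writing $\beta_j=1-\alpha_j$, the summand $\cot(\pi\alpha_j)\tan(\pi n/(hk))$ equals $\psi_{\beta_j}(w_n)$ with $w_n=-n/(hk\beta_j)$, not $\psi_{\beta_j}(z_n)$ with $z_n>0$. After applying~\eqref{dfps2} and passing to the integral, a folded residue class therefore contributes $\frac{k}{h}\int_0^z\bigl(\log(1+u/(h\beta_j))-u/(h\beta_j)\bigr)\df u$, with the opposite sign in the argument of the logarithm and the opposite sign on the linear term. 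Summing over $j$, the integrand is
$$\sum_{1\leq\ell\leq \lfloor(h-1)/2\rfloor}\bigl[\log(1-u/\ell)+u/\ell+\log(1+u/\ell)-u/\ell\bigr]=\sum_{\ell}\log(1-u^2/\ell^2),$$
which is what leads to the infinite sine product $\prod_\ell(1-u^2/\ell^2)=\sin(\pi u)/(\pi u)$. Your unfolded identity $\sum_{j}\log(1-u/(h\alpha_j))=\log\prod_{l=1}^{h-1}(1-u/l)$ is not what the Taylor expansion produces, and the two candidate main terms genuinely differ: already at leading order in $u$, $\sum_{l=1}^{h-1}\bigl(\log(1-u/l)+u/l\bigr)\sim-\tfrac{u^2}{2}\zeta(2)$ while $\sum_{\ell\geq1}\log(1-u^2/\ell^2)\sim-u^2\zeta(2)$, so the discrepancy is $\Theta(k/h)$ at $z\asymp1$, far outside the allowed error.

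There is a second, independent gap. The delicate residue class is not where $\alpha_j\approx1/2$ (there $|z_n|\ll z/(h\alpha_j)\ll 1/h$, so $\psi_{\alpha_j}$ is small and either bound~\eqref{dfps1} or~\eqref{dfps2} is innocuous) but where $h\alpha_j=1$, i.e.~$j\equiv k\bmod h$. In that class $z_n=n/k$ runs up to nearly $1$, so $\log(1-z_n)$ develops a logarithmic singularity; this is exactly why the lemma statement carries the factor $|\log(1-z)|$. Lemma~\ref{triab} requires a polynomial bound $|g|\leq C_1|z|^m+C_2$ on a full vertical strip, which $\log(1-z_n)$ does not satisfy near $z_n=1$, so the sum-to-integral step cannot be performed by Lemma~\ref{triab} there. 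This is the hardest part of the proof: the paper isolates the class $\ell=1$ and evaluates $\ssum{n\equiv k\bmod h}\log(1-n/k)$ directly as $(g+1)\log(h/k)+\log\Gamma(q+1)-\log\Gamma(q-g)$, then controls it with Stirling's formula; nothing in your sketch replaces this. Absent both the corrected folded integrand and the treatment of the $j\equiv k$ class, the argument does not yield the stated main term or the $|\log(1-z)|$-sensitive error bound.
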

\begin{proof}
  In this proof, the notation~$\Oh$ will stand for~$\Oh_{(k,h,r_0)}^z$ relative to the set~$D_2 = \{z, \Re(z)\in[0, 1)\}$. We divide the sum into congruence classes $n\equiv \ell k\mod h$, $|\ell|<h/2$, where the possible term $\ell=h/2$ is excluded since the summand is zero in this case. With the notation~\eqref{dfps}, we write the sum to be computed as
  $$ \sum_{0 < \abs{\ell} < h/2} S(\ell), \qquad S(\ell) := \ssum{0<n\leq r \\ n\equiv \ell k \mod{h}} \psi_{\ell/h}\Big(\frac{n}{k\ell}\Big). $$
  We consider each~$\ell$ separately. 
  We first consider~$\ell \neq 1$. Let~$\delta_\ell := \{\frac{r_0-k\ell}{h}\}$, and
  $$ \alpha_\ell = \left\lceil\frac{-k\ell}h\right\rceil - \frac12, \qquad \beta_\ell(r) := \floor{\frac{r-k\ell}{h}} + \frac12 = \frac{r-k\ell}{h} + \frac12 - \delta_{\ell}. $$
  Then by Lemma~\ref{lem:abelplana}, we have
  $$ S(\ell) = \sum_{\alpha_\ell < m \leq \beta_\ell(r)} \psi_{\ell/h}(1 + \tfrac{hm}{k\ell}) = I + C(\alpha_\ell) - C(\beta_\ell(r)), $$
  where
  \begin{align*}
    I := \int_{\alpha_\ell}^{\beta_\ell(r)} \psi_{\ell/h}(1 + \tfrac{ht}{k\ell})\df t, \qquad  C(\gamma) := 2 \int_0^\infty \frac{\Im (\psi_{\ell/h}(1+\frac{h}{k\ell}(\gamma + it)))\df t}{\e^{2\pi t} + 1}.
  \end{align*}
Note that~$\beta_\ell(r) = \frac{r-r_0}h + \beta_\ell(r_0)$, the right-hand side of which depends holomorphically on~$r$.
Splitting the integral as $ \int_{- {k\ell}/h}^{(z-\ell) k/h}+\int_{\alpha_\ell}^{- {k\ell}/h}+\int_{(z-\ell) k/h}^{\beta_\ell(r)}$ with~$z = r/k$, we get
  \begin{align*}
    I &{} =
        \frac{k}h \int_0^{z} \psi_{\ell/h}(t/\ell)\df t + \int_{\alpha_\ell+\frac{k\ell}h}^{0}\psi_{\ell/h}(\tfrac{ht}{k\ell})\df t + \int_0^{\frac12-\delta_\ell} \psi_{\ell/h}(\tfrac z\ell+\tfrac{ht}{k\ell})\df t \\
    &{} =  \frac{k}h \int_0^{z} \psi_{\ell/h}(t/\ell)\df t + \Oh\big(\tfrac{1+\abs{z}^2}{\ell^2}\big),
  \end{align*}
  by Lemma~\ref{mlftt}.

  Next, we have
  $$ C(\beta_\ell(r)) = \frac1{2i}\sum_\pm \pm \int_0^\infty \psi_{\ell/h}\Big(\frac z\ell + \frac{h}{k\ell}\Big(\frac12-\delta_\ell\Big) \pm i\frac{t h}{k\ell}\Big)\frac{\df t}{\e^{2\pi t}+1}. $$
  This also defines a holomorphic function of~$z$ for~$\Re(z) \in [0, 1)$, by Lemma~\ref{mlftt}. Since~$\ell \neq 1$, it is bounded by~$\Oh(\frac{\abs{z}^2+1}{\ell^2})$.% , while for~$\ell = 1$, it is bounded, using~\eqref{dfps2}, by
  % $$ \ll 1 + \abs{z}^3 + \abs{\int_0^\infty \log\Big(\frac{(1-\Re(z))^2 + (\Im(z)+\frac{th}k)^2}{(1-\Re(z))^2 + (\Im(z)-\frac{th}k)^2}\Big) \frac{\df t}{\e^{2\pi t}+1}} \ll 1 + \abs{z}^3 $$
  % by Lemma~\ref{lem:bound-intF}.
  
  Grouping the above discussion, we deduce for~$\ell\neq 1$ the estimate
  \begin{equation}
    S(\ell) = \frac{k}{h} \int_0^z \psi_{\ell/h}(t/\ell)\df t + \Oh\Big(\frac{1+\abs{z}^2}{\ell^2}\Big).\label{eq:estim-Sell-neq1}
  \end{equation}

  Consider now the case~$\ell = 1$. We recall the notation~$\delta_\ell$ from above. Since~$h\geq 3$, by~\eqref{dfps2} and Lemma~\ref{triab} we have
  \begin{equation}
    S(1) = \Oh(\tfrac{k}{h^3}(\abs{z}^4+1)) + \ssum{0<n\leq r \\ n\equiv k \mod{h}} \Big( \frac nk + \log\Big(1 - \frac nk\Big)\Big).\label{eq:sumcotan-ell-1}
  \end{equation}
  Let~$a\in\{1, \dotsc, h\}$ satisfy~$a\equiv k\mod{h}$, $q = \frac{k-a}h$ and~$g = \floor{\frac{r-a}h} = \frac{r-a}h - \delta_1$. In the sum, the integer~$m = \frac{k-n}{h}$ runs through~$\Z\cap[q-g, q]$, so that
  \begin{align*}
    \ssum{0<n\leq r \\ n\equiv k \mod{h}} \log\Big(1 - \frac nk\Big) = {}& (g+1)\log\Big(\frac hk\Big) + \log\frac{\Gamma(q+1)}{\Gamma(q-g)} \\[-1em]
    = {}& \int_0^{g+1}\Big(\log\Big(\frac hk\Big) + \frac{\Gamma'}{\Gamma}(q+1-v)\Big)\df v \\
    = {}& \int_0^{g+1}\Big(\log\Big(\frac hk(q+1-v)\Big) - \frac1{q+1-v} - \int_0^\infty \frac{\{s\}\df s}{(s+q+1-v)^2}\Big)\df v
  \end{align*}
  by Stirling's formula~\cite[Theorem~II.0.12]{Tenenbaum2015a}. First, setting~$t = \frac{(v-1)h+a}k$, we have
  \begin{align*}
    \int_0^{g+1}\log\Big(\frac hk(q+1-v)\Big)\df v = {}& \frac kh\int_0^z \log(1-t)\df t + \frac kh\Big[\int_{\frac{a-h}{k}}^0 + \int^{z-\frac hk \delta_1}_z\Big] \log(1-t)\df t \\
    = {}& \frac kh\int_0^z \log(1-t)\df t + \Oh(\abs{\log(1-z)} + \log(1+k/h)).
  \end{align*}
  Secondly, we have
  \begin{align*}
    \int_0^{g+1} \frac{\df v}{q+1-v} = \log\Big(1-z+\frac{h}k\delta_1\Big) - \log\Big( 1 + \frac{h-a}k\Big) = \Oh(\abs{\log(1-z)} + \log(1+k/h)).
  \end{align*}
  Note that in both cases, as well as in the following computations, the error term~$\abs{\log(1-z)}$ can be omitted if~$z \in [0, 1 - \frac hk(1-\delta_1)]$ (which is the case when~$z=r/k$).
  Finally,
  \begin{align*}
    \int_0^{g+1}\int_0^\infty \frac{\{s\}\df s}{(s+q+1-v)^2}\df v = {}& \int_0^\infty \frac{\{s\}(g+1)\df s}{(s+q-g)(s+q+1)} \\
    = {}& \int_0^\infty \{s\}\frac{1 + \frac kh z - \frac ah - \delta_1}{(s+\frac kh(1-z) + \delta_1)(s+\frac{k-a}h+1)}\df s \\
    = {}& \Oh(\log (1+k/h)+\log(1+|z|) ).
  \end{align*}
  We turn to the contribution of the term~$n/k$ in~\eqref{eq:sumcotan-ell-1}. By a direct computation, we find
  \begin{align*}
    \ssum{0<n\leq r \\ n\equiv k \mod{h}} \frac nk - \frac kh \int_0^z t\df t =
    {}& z(\tfrac12-\delta_1) + 1 - \tfrac{h}{2k}(\tfrac ah + \delta_1)(\tfrac ah + 1 - \delta_1) = \Oh(\abs{z}+1).
  \end{align*}
  On the other hand, we note that by~\eqref{dfps2},
  $$ \int_0^z (t + \log(1-t))\df t = \int_0^z \psi_{1/h}(t)\df t + \Oh\Big(\frac{1+\abs{z}^4}{h^2}\Big). $$
  Grouping the above estimates, we deduce
  \begin{equation}
    S(1) = \frac kh\int_0^z \psi_{1/h}(t)\df t + \Oh\big((1 + \tfrac{k}{h^3} + \log(k/h))(1 + \abs{z}^4 + \abs{\log(1-z)})\big).\label{eq:estim-S1}
  \end{equation}
  
  We now sum the estimates~\eqref{eq:estim-Sell-neq1}, \eqref{eq:estim-S1} over~$\ell$, getting
  $$ \sum_{0<\abs{\ell}<h/2} S(\ell) = \frac{k}{h} \int_0^z \sum_{0<\abs{\ell}<h^{2/3}} \psi_{\ell/h}(t/\ell)\df t +
  \Oh((1 + \tfrac{k}{h^{3}} + \log(k/h))(1+\abs{z}^4+\abs{\log(1-z)})). $$
  The main term is evaluated by~\eqref{dfps2} (for~$\abs{\ell}\leq h/3$) and~\eqref{dfps1} (for~$h/3<\abs{\ell}<h/2$) as
  \begin{align*}
    \sum_{ 0<|\ell|\leq h/2}\int_0^z \psi_{\ell/h}(t/\ell)\df t&=  \sum_{ 0<|\ell|\leq h/3}\bigg(\int_0^z(\log(1-t/\ell)+t/\ell)\df t+ \Oh\Big(\frac{1+\abs{z}^4}{h^2}\Big)\bigg) + \Oh\Big(\frac{1+\abs{z}^3}{h}\Big) \\
    & =  \sum_{0<\ell<h/3}\int_0^z \log(1-t^2/\ell^2)\df t+ \Oh\Big(\frac{1+\abs{z}^4}{h}\Big) \\
    & =  \sum_{\ell>0}\int_0^z\log(1-t^2/\ell^2)\df t + \Oh\Big(\frac{1+\abs{z}^4}{h}\Big).
  \end{align*}
  For $-1\leq t\leq1$ we have $\prod_{\ell=1}^\infty (1-t^2/\ell^2)=\frac{\sin (\pi t)}{\pi t}$. Moreover, for $0\leq z<1$ we have
  \begin{align*}
    \int_{0}^{z}\log(2\sin(\pi t))\df t&=\int_{0}^{z}\log(-i\e^{\pi i t}(1-\e^{-2\pi i t}))\df t=\frac{\pi i (z^2-z)}{2}+\int_{0}^{z}\log(1-\e^{-2\pi i t})\df t\\
    &=\frac{\pi i (z^2-z)}{2}+\Lie(z)+\frac{\pi i }{12},
  \end{align*}
  by~\eqref{dig}. Collecting the previous estimates, we conclude that
  \begin{align*}
    \sum_{0 < \abs{\ell} < h/2} S(\ell) = {}& \frac kh\Big(\frac{\pi i (z^2-z)}{2}+\Lie(z)+\frac{\pi i }{12}+z \log(2 \pi z/\e)\Big) \\
    \qquad & + \Oh\Big(\Big(1 + \frac k{h^2} + \log(k/h)\Big)(1 + \abs{z}^4 + \abs{\log(1-z)}) \Big),
  \end{align*}
  as claimed.
\end{proof}

\subsection{Second reciprocity formula for the $q$-Pochhammer symbol}\label{sec:recip2-poch}

\begin{theorem}\label{thp}
  Let $4\leq h< k$ with $(h,k)=1$ and $r_0\in \{0, \dotsc, h-1\}$. Let~$0\leq r<k$ with $r\equiv r_0\mod{h}$.
  We have 
  \begin{align*}
    \frac{(\e[-{\overline h}/k])_{r}\e[-\frac{h}{24k}]}{(\e[{\overline k}/h])_{r_0}\e[\frac{k}{24h}]}=\exp\bigg(&\frac{k}h\Lie(r/k) -\frac{\pi }k\ssum{1\leq n\leq r_0}\cot\Big(\pi\frac{n\overline k }h\Big)\frac{ n}{h}+\frac {\pi}k\floor{\frac rh}c_0\Big(\frac{\overline k}h\Big) \\
    {}& + \Oh_{h,k,r_0}^{z}\Big((1+|z|^4+\abs{\log(z(1-z))})\Big(1+\log\Big(\frac kh\Big)+\frac k{h^{2}}\Big)\Big)\bigg)
  \end{align*}
  where~$z=r/k$, using the notation~\eqref{eq:def-Oh} with domain~$D_2 = \{z\neq 0, \Re(z)\in[0, 1)\}$. Moreover, the term~$\abs{\log(z(1-z))}$ can be omitted if~$z\in[\frac{r_0}k, 1-\frac{h}k(1-\{\frac{r_0-k}{h}\})]$.
\end{theorem}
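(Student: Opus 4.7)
Take logarithms; the task is to evaluate
\[
\sum_{n=1}^r \lf(-n\bar h/k) \;-\; \sum_{n=1}^{r_0}\lf(n\bar k/h) \;-\; \frac{\pi i}{12}\Big(\frac{h}{k}+\frac{k}{h}\Big).
\]
The decisive input is the reciprocity $h\bar h+k\bar k\equiv 1\pmod{hk}$, which gives $-n\bar h/k\equiv n\bar k/h - n/(hk)\pmod 1$. I split the first sum according to whether $h\mid n$ or $h\nmid n$. For $n=\ell h$ with $1\leq\ell\leq q:=\lfloor r/h\rfloor$, the congruence $h\bar h\equiv 1\pmod k$ gives $\e[-n\bar h/k]=\e[-\ell/k]$, contributing $\sum_{\ell=1}^q \lf(-\ell/k)$, whose real part is $q\log(2\pi q/(\mathrm{e}k))+\Oh(\log q+q^3/k^2)$ by Stirling, and whose imaginary part is $i\pi q/2-i\pi q(q+1)/(2k)$.

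\medskip

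For $h\nmid n$, writing $A=\pi n\bar k/h$, $B=\pi n/(hk)$, the exact algebraic identity
\[
 1-\e[-n\bar h/k] \;=\; (1-\e[n\bar k/h])\cos B\,\bigl(1-\cot A\,\tan B\bigr)\,\mathrm{e}^{-iB}
\]
(from $\sin(A-B)=\sin A\cos B(1-\cot A\tan B)$) yields, modulo $2\pi i\Z$,
\[
 \lf(-n\bar h/k) \equiv \lf(n\bar k/h) + \log\cos(\pi n/(hk)) + \log(1-\cot A\tan B) - i\pi n/(hk).
\]
Since $\e[n\bar k/h]$ depends only on $n\bmod h$, grouping by residues and using $\prod_{j=1}^{h-1}(1-\e[j/h])=h$ collapses $\sum_{h\nmid n,\,n\leq r}\lf(n\bar k/h)$ to $q\log h+\sum_{n=1}^{r_0}\lf(n\bar k/h)$; the second piece cancels the corresponding term on the right-hand side of the statement.

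\medskip

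The analytic core is Lemma~\ref{madc} applied to $\sum_{h\nmid n,\,n\leq r}[\log(1-\cot A\tan B)+\cot A\tan B]$, which directly produces the main term $\tfrac{k}{h}\bigl[\Lie(z)+\tfrac{\pi i(z^2-z)}{2}+\tfrac{\pi i}{12}+z\log(2\pi z/\mathrm{e})\bigr]$ with a holomorphic error of the form $\Oh^z((1+|z|^4+|\log(1-z)|)(1+k/h^2+\log(k/h)))$. To recover $\sum\log(1-\cot A\tan B)$ itself one subtracts $\sum\cot A\tan B$; expanding $\tan B=\pi n/(hk)+O((n/(hk))^3)$, grouping by $i=n\bmod h$, and invoking the identities $\sum_{i=1}^{h-1}\cot(\pi i\bar k/h)=0$ and $\sum_{i=1}^{h-1}i\cot(\pi i\bar k/h)=-h\,c_0(\bar k/h)$ yields
\[
 \sum_{h\nmid n,\,n\leq r}\cot A\,\tan B \;=\; \frac{\pi}{hk}\sum_{n=1}^{r_0} n\cot(\pi n\bar k/h)-\frac{\pi q}{k}c_0(\bar k/h)+\frac{\pi q}{k}\sum_{i=1}^{r_0}\cot(\pi i\bar k/h)+\Oh(k\log h/h^3).
\]
The first two terms match exactly the cotangent and $c_0$ contributions in the target.

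\medskip

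The principal obstacle is the bookkeeping for the residual terms, which must collapse into the claimed error. They are: the Stirling discrepancy between $q\log h+q\log(2\pi q/(\mathrm{e}k))$ and $\tfrac{r}{h}\log(2\pi r/(\mathrm{e}k))$, equal to $\Oh(1+|\log z|)$ via $r=qh+r_0$; the cosine sum $\sum_{h\nmid n}\log\cos(\pi n/(hk))=\Oh(k/h^2)$; the cubic Taylor remainder of $\tan$, absorbed in $\Oh(k/h^2)$; the ``leftover'' $\frac{\pi q}{k}\sum_{i=1}^{r_0}\cot(\pi i\bar k/h)$, controlled through the partial cotangent-sum estimate $|\sum_{i=1}^{r_0}\cot(\pi i\bar k/h)|\ll h\log h$; and the imaginary contributions from $\sum\lf(-\ell/k)$, $-i\pi\sum_{h\nmid n}n/(hk)$, $\frac{k}{h}\frac{\pi i(z^2-z)}{2}$ and $-\frac{\pi i}{12}(h/k+k/h)$, which must be balanced (with the cancellation $\frac{k}{h}\cdot\frac{\pi i}{12}=\frac{\pi ik}{12h}$) modulo the $2\pi i\Z$ ambiguity absorbed by the $\exp$ on the right of the statement. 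The $|\log z|$ component of the error originates from the $\ell=1$ end of $\sum_{\ell=1}^q\lf(-\ell/k)$ and from $(r_0/h)\log(r/k)$, and it disappears on the interval where $z$ is bounded away from $0$ (and from $1$). Preserving the $\Oh^z$ holomorphic structure in the final error requires that these auxiliary sums be re-expressed as holomorphic functions of $r$ (and hence $z$) via Abel--Plana, in exactly the spirit used inside Lemma~\ref{madc}.
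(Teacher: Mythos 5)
Your plan follows the paper's own proof of Theorem~\ref{thp} in every structural step: the reciprocity $h\bar h + k\bar k\equiv 1\pmod{hk}$ to replace $-\bar h/k$ by $\bar k/h - 1/(hk)$, the split of the $q$-Pochhammer product according to $h\mid n$ or $h\nmid n$, the factorization of $1-\e[y-x]$ into the $(\e[\bar k/h])$-part, a cosine part, and the tangent--cotangent part, Stirling for the $h\mid n$ product (the paper's $\mathcal{P}$), and Lemma~\ref{madc} for the logarithmic cotangent--tangent sum (the paper's $\mathcal{L}$). The remaining algebra (the cotangent identities giving the $c_0$-term, the Taylor expansion of $\tan$, the handling of $\mathcal{M}$) is the same too.

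One substantive point deserves attention. Your decomposition of $\sum_{h\nmid n,\,n\leq r}\cot(\pi n\bar k/h)\cdot\pi n/(hk)$ keeps an extra term
\[
\frac{\pi q}{k}\sum_{i=1}^{r_0}\cot\Big(\pi\frac{i\bar k}{h}\Big),\qquad q=\Big\lfloor\frac rh\Big\rfloor,
\]
which the paper's displayed identity omits: the re-indexing $n=i+\ell h$, $1\leq i<h$, $1\leq\ell\leq q$, covers $\{h+1,\dots,(q+1)h-1\}\setminus h\Z$ rather than $\{r_0+1,\dots,r\}\setminus h\Z$, and the weight $\pi n/(hk)$ differs by $\pi q/k$ on the residues $r_0+1,\dots,h-1$; since $\sum_{i=1}^{h-1}\cot(\pi i\bar k/h)=0$, the discrepancy is exactly the term above. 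So your accounting is the more careful one. However, your proposed bound $\bigl|\sum_{i\leq r_0}\cot(\pi i\bar k/h)\bigr|\ll h\log h$ only gives $\tfrac{\pi q}{k}\bigl|\sum_i\cot\bigr|\ll\log h$, and for $h\asymp k$ with $r_0$ of size comparable to $h$ this is not dominated by the stated error $(1+|z|^4+|\log(z(1-z))|)\bigl(1+\log(k/h)+k/h^2\bigr)$, which is then $O(1)$. To absorb this contribution one would need either a sharper, $\bar k$-dependent estimate for the partial unweighted cotangent sum (e.g.\ via Abel summation in terms of the weighted partial sums $\sum_{i\leq r'}\tfrac ih\cot(\pi i\bar k/h)$ that already appear in the error of Theorem~\ref{th:2}), or to record the term explicitly in the error of Theorem~\ref{thp}. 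As written, the claimed absorption of this residual into the stated error bound does not go through.
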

\begin{proof} In this proof, the notation~$\Oh$ will stand for~$\Oh_{h,k,r_0}^z$ with respect to the domain~$D_2 = \{z\neq 0, \Re(z)\in[0, 1)\}$. 
  Applying the identity $\frac{\overline h}{k}+\frac{\overline k}{h}\equiv \frac{1}{hk}\mod 1$ we have
  \begin{align*}
    (\e[-{\overline h}/k])_{r}&=\prod_{n=1}^r\bigg(1-\e[\frac{n\overline k }h - \frac n{hk}]\bigg)
    = \mathcal{P}\cdot \pprod{n=1\\ h\nmid n}^r\bigg(1-\e[\frac{n\overline k }h - \frac { n}{hk}]\bigg)
  \end{align*}
  where $\mathcal{P}:=\prod_{n=1}^{\lfloor r/h\rfloor}\big(1-\e[-\frac { n}{k}]\big)$. Note that for $x,y\in\C$, we have
  \begin{align*}
    1-\e[y-x]&= \frac{1}2 (1+\e[-x])\big (1-\e[y]\big)(1-\tan (\pi x) \cot (\pi y)).
  \end{align*}
  Thus, since $\prod_{n=1}^{h-1}(1-\e[\frac{n\overline k }h])=h$, we have
  \begin{align}\label{feqr2}
    (\e[-{\overline h}/k])_{r}&=(\e[{\overline k}/h])_{r_0}\, h^{\floor{ r/h}} \cdot \mathcal{P}\cdot \mathcal{M}\cdot \mathcal{L},
  \end{align}
  where 
  \begin{align*}
    \mathcal{M}=\pprod{n=1\\ h\nmid n}^r\frac{1+\e[-\frac{ n}{hk}] }2, &&   \mathcal{L}    =   \pprod{n=1\\ h\nmid n}^r\bigg(1-\cot\Big( \frac{\pi n\overline k }{h} \Big)\tan\Big( \frac { \pi n}{hk} \Big)\bigg).
  \end{align*}
  First, we examine $\mathcal{M}$. We have
  \begin{align*}
    \mathcal{M}&=\exp\bigg(-\ssum{n=1\\ h\nmid n}^r\frac{\pi i n}{hk}+\ssum{n=1\\ h\nmid n}^r\log\Big(\cos\Big(\frac{\pi n}{hk}\Big)\Big)\bigg).
  \end{align*}
  We split the second sum as $\ssum{1\leq n\leq r}-\ssum{1\leq n\leq r-r_0,\, h|n}$. For any $\eps>0$ we have $\log(\cos(\frac{\pi w}{hk}))\ll_\eps (\frac {|w|}{hk})^2$ for $\abs{\Re(w/(hk))}\leq (1-\eps)/2$ and so applying Lemma~\ref{triab} we obtain, with~$z = r/k$,
  \begin{align}\label{bfmm}
    \mathcal{M}&=\exp\bigg(-\ssum{n=1\\ h\nmid n}^r\frac{\pi i n}{hk} + \frac{k}{h^2} \Oh(\abs{z}^3+1) \Big)\bigg)
    = \exp\bigg(-\frac{\pi i (r^2+r)}{2hk}+\frac{k}{h^2}\Oh(\abs{z}^3+1)\bigg).
  \end{align}
% that there exists $\varphi_{r_0,1}(z)$, holomorphic in $0\leq \Re(z)\leq 1$ and therein satisfying $\varphi_{r_0,1}(z)\ll |z|^3+1$, such that
%   \begin{align}\label{bfmm}
%     \mathcal{M}&=\exp\bigg(-\ssum{n=1\\ h\nmid n}^r\frac{\pi i n}{hk}+\frac{k}{h^2}\varphi_{r_0,1}(r/k)\Big)\bigg)=\exp\bigg(-\frac{\pi i (r^2+r)}{2hk}+\frac{k}{h^2}\varphi_{r_0,2}(r/k)\bigg)
%   \end{align}
%   with $\varphi_{r_0,2}(z)$ holomorphic and $\ll |z|^3+1 $ in $0\leq \Re(z)\leq 1$.
  
  We then move to $\mathcal{P}=\exp(\log \mathcal{P})$. Taking the determination which is real on the negative imaginary axis, we have that $\log(\frac{1-\e[-w]}{2\pi iw})$ is holomorphic and $O_\eps(|w|)$ for $\abs{\Re(w)}<1-\eps$.
Thus, by Lemma~\ref{triab},
  \begin{align*}
    \mathcal{P}&=\exp\bigg(\sum_{1\leq n \leq (r-r_0)/h} \log\Big(2\pi i\frac{n}{k}\Big)+\frac{k}{h^2}\Oh(\abs{z}^2+1)\bigg)\\
    &=\exp\bigg( \log\Gamma\Big(1+\frac{r-r_0}h\Big)+\frac{r-r_0}h\log\frac{2\pi i}k+\frac{k}{h^2}\Oh(\abs{z}^2+1)\bigg).
  \end{align*}
  Write~$\log (\Gamma(1+w))=(w+1/2)\log(w+1)-w+\E_1(w)$ with $\E_1$ holomorphic and $O(|\log (w+1)|)$ on $\Re(w)>-1$. Abbreviating temporarily~$q = \frac{r-r_0}h$, it follows that 
  \begin{align}
    \mathcal{P}&=\exp\bigg(
    \Big(\frac12+q\Big)\log(1+q)+ q\log\frac{2\pi i}{ke}
    +\E_1(1+q)+\frac{k}{h^2}\Oh(\abs{z}^2+1)
    \bigg)\notag\\
    &=\exp\bigg( {\frac rh}\log\Big(\frac{2\pi i  r}{ke}\Big)- \Big\lfloor \frac{r}h\Big\rfloor\log h+\Oh((1+\log(k/h)+\tfrac{k}{h^2})(1+\abs{z}^2+\abs{\log(z)})
    \bigg),\label{bfpp}
  \end{align}
  since
  \begin{align*}
    &\Big(\frac12-\frac{r_0}{h}\Big)\log\Big(\frac{kz+h-r_0}{h}\Big)+\frac{kz}h\log\Big(1+\frac{h-r_0}{kz}\Big)+{}\\
    &\qquad- \frac{r_0}h\log\frac{2\pi i h}{ke}+\E_1\Big(\frac{kz+h-r_0}h\Big) = \Oh(\abs{z} + \abs{\log(z)}).
  \end{align*}
  Note that the terms~$\abs{\log(z)}$ can be omitted if~$z\in[\frac{r_0}k, 1)$.

  It remains to study $\mathcal{L}$. By Lemma~\ref{tan_ineq} we have $|\tan( \frac{\pi n}{hk})\cot(\pi \frac{n\overline k}h)|<1$ and so we can write $\mathcal{L}=\exp(\log \mathcal{L})$ with the principal determination. First, we consider
  \begin{equation*}
    \ssum{1\leq n\leq r\\ h\nmid n}\tan\Big(\pi\frac n{hk}\Big)\cot\Big(\pi\frac{n\overline k }h\Big)=\ssum{1\leq n\leq r\\ h\nmid n}\frac {\pi n}{hk}\cot\Big(\pi\frac{n\overline k }h\Big)+\ssum{1\leq n\leq r\\ h\nmid n}\E_2\Big(\frac n{hk}\Big)\cot\Big(\pi\frac{n\overline k }h\Big),
  \end{equation*}
  where $\E_2(z):=\tan(\pi z)-\pi z$. Clearly, $\E_2(z)$ is holomorphic and $O_\eps(|z|^2)$ in $|\Re(z)|<(1-\eps)/2$. Thus, dividing in congruence classes modulo $h$, the second summand above is
  \begin{align*}
    \Oh\Big((1+|z|^3)\sum_{\ell=1}^h\frac 1\ell\frac k{h^2}\Big) = \Oh\Big((1+|z|^3)\frac {k\log h}{h^2}\Big).
  \end{align*}
  Also,
  \begin{align*}
    \ssum{1\leq n\leq r\\ h\nmid n}\cot\Big(\pi\frac{n\overline k }h\Big)\frac{\pi n}{hk}&=\ssum{1\leq n\leq r_0}\cot\Big(\pi\frac{n\overline k }h\Big)\frac{\pi n}{hk}+\sum_{\ell=1}^{\floor{r/h}}\ssum{1\leq n<h}\cot\Big(\pi\frac{n\overline k }h\Big)\frac{\pi (n+\ell h)}{hk}\\
    &=\frac{\pi }k\ssum{1\leq n\leq r_0}\cot\Big(\pi\frac{n\overline k }h\Big)\frac{ n}{h}-\floor{\frac rh}\frac {\pi}kc_0\Big(\frac{\overline k}h\Big)
  \end{align*}
  since $\sum_{n=1}^{h-1}\cot(\pi\frac{n}h)=0$. Thus, by Lemma~\ref{madc} we have 
  \begin{align*}
    \mathcal{L}=\exp\bigg(&\frac{\pi i (r^2-rk)}{2kh}+\frac kh\Big(\Lie(r/k)+\frac{\pi i }{12}+\frac rk \log\Big( \frac {2 \pi r}{ke}\Big)\Big)+{}\\
    &+\frac{\pi }k\ssum{1\leq n\leq r_0}\cot\Big(\pi\frac{n\overline k }h\Big)\frac{ n}{h}    -\frac {\pi}k\floor{\frac rh}c_0\Big(\frac{\overline k}h\Big) \\
    & +\Oh\Big(\Big(1+\frac{k}{h^2}+\log(k/h)\Big)(1+|z|^4+|\log(1-z)|)\Big)\bigg),
  \end{align*}
  where the error term~$\abs{\log(1-z)}$ can be omitted if~$z\in[0,1 - \frac{h}k(1-\{\frac{r_0-k}h\})]$.
  The theorem then follows by~\eqref{feqr2},~\eqref{bfmm} and~\eqref{bfpp}, since $\frac{ h}{24k}=O(1).$
\end{proof}

\section{Proof of Theorem~\ref{th:1}}\label{pmt1}

Throughout the rest of the section,  $K$ will denote any hyperbolic knot $K\neq 7_2$ with at most $7$ crossings. 

We will use the same notation as in Section~\ref{ss:1rf}. In particular, all error terms will be allowed to depend on~$d$ and $\gamma$.

For~$n\in\N_{\geq 0}$ and~$\alpha=\frac hk\in\Q$, with $(h,k)=1$, we let
\begin{align}\label{sb}
  [\alpha]_n :=k^{-1/2}  (\e[\alpha])_{n'}, 
\end{align}
where $n'\equiv n\mod {k}$, $0\leq n'<k$.

There exist $m,m_1,\dots,m_4\in\N$, $\iota,\upsilon\in\Z$ and linear functions  $\ell_{i,j}(\boldsymbol r)=\sum_{u=1}^m\kappa_{i,j}(u)r_u$ with $\kappa_{i,j}\in\{0,\pm1\}$ such that 
\begin{align}
  \J_{K}( x) ={}& \denom(x)^{\iota}\e[\upsilon x] \sumstar_{0\leq r_1,\dots, r_m<k}\frac{\prod_{j=1}^{m_1}[ x]_{\ell_{1,j}(\boldsymbol r)}\prod_{j=1}^{m_2}\overline{[ x]}_{\ell_{2,i}(\boldsymbol r)}}{\prod_{j=1}^{m_3}[ x]_{\ell_{3,j}(\boldsymbol r)}\prod_{j=1}^{m_4}\overline{[ x]}_{\ell_{4,j}(\boldsymbol r)}} \label{exja} \\
  = {}& \denom(x)^{\iota}\e[\upsilon x]  \sumstar_{0\leq r_1,\dots, r_m<k} \Pi_K\big([x]_{\ell_{i,j}(\boldsymbol r)}\big), \notag
\end{align}
where $\sumstar$ indicates that the sum is restricted to the terms with $0\leq \ell_{i,j}(\bs r)<k$ and, here and what follows, we put
$$ \Pi_K\big(z_{i, j}\big) = \frac{\prod_{j=1}^{m_1} z_{1,j} \prod_{j=1}^{m_2} \overline{z_{2,j}}}{\prod_{j=1}^{m_3} z_{3,j} \prod_{j=1}^{m_4} \overline{z_{4,j}}}. $$
The Kashaev invariants for the knots under consideration has been given for example  in~\cite{Ohtsuki52,OhtsukiYokota6,Ohtsuki7}. In all these cases, $m+3$ coincides with the number of crossings of $K$, moreover
$$ \iota=\frac{3-m}2, \qquad m_1+m_2+m_3+m_4 = 3m-1, $$
and the values of $m_i,\ell_{i,j}$ are as in Figure~\ref{fig:table-K-param}, where we used the formula $ [\alpha]_n [\overline{\alpha}]_{\denom({\alpha})-n}=1$ to write the Kashaev invariants given in~\cite{Ohtsuki52,OhtsukiYokota6,Ohtsuki7} as in~\eqref{exja}. Finally, different variants of the definition of the Kashaev invariant lead to slightly different values for $\upsilon$~(cf.~\cite[p.677 footnote 4]{Ohtsuki52} and~\cite{Yokota2003}). 
In the context of the modularity conjecture it is natural to always take $\upsilon=0$, which we shall do in the following. This choice will lead to the expression~\eqref{eq:J0-approxmod} for the reciprocity formula, as conjectured by Zagier. Using~\eqref{expid} one can then easily deduce the suitable modified reciprocity formula corresponding to other choices of $\upsilon$.

\begin{figure}[h]
  \centering
  \begin{equation*}
    \begin{array}{|c|c|l|l|}
      \hline
      K & (m_j)_{1 \leq j \leq 4} & \ell_{i,j} = \ell_{i,j}(\br) \\ \hline

      4_1 &
      \begin{array}{l}
        (1, 1, 0, 0)
      \end{array} & 
      \begin{array}{l}
        \ell_{1,1} = \ell_{2,1} = r
      \end{array} \\ \hline

      5_2 &
      \begin{array}{l}
        (0, 1, 2, 2)
      \end{array} & 
      \begin{array}{l}
        \ell_{2,1}=r_1+r_2,\quad \ell_{3,1}=r_1+r_2 \\
        \ell_{3,2}=\ell_{4,1}=r_2,\quad \ell_{4,2}=r_1 \\
      \end{array} \\ \hline

      6_1 &
      \begin{array}{l}
        (0, 2, 3, 3)
      \end{array} & 
      \begin{array}{l}
        \ell_{2,1}=r_1+r_2,\quad \ell_{2,2}=r_1+r_2+r_3 \\
        \ell_{3,1}=r_1,\quad \ell_{3,2}=r_1+r_2,\quad \ell_{3,3}=r_1+r_2+r_3 \\
        \ell_{4,1}=r_1,\quad \ell_{4,2}=r_2,\quad \ell_{4,3}=r_3
      \end{array} \\ \hline

      6_2 &
      \begin{array}{l}
        (2, 1, 2, 3)
      \end{array} & 
      \begin{array}{l}
        \ell_{1,1}=r_1,\quad \ell_{1,2}=r_2+r_3,\quad \ell_{2,1}=r_1 \\
        \ell_{3,1}=\ell_{4,1}=r_2,\quad \ell_{3,2}=r_3,\quad \ell_{4,2}=r_1-r_2,\quad \ell_{4,3}=r_2+r_3
      \end{array} \\ \hline

      6_3 &
      \begin{array}{l}
        (1, 1, 3, 3)
      \end{array} & 
      \begin{array}{l}
        \ell_{1,1}=\ell_{2,1}=r_2,\quad \ell_{3,1}=\ell_{4,1}=r_1 \\
        \ell_{3,2}=\ell_{4,2}=r_3,\quad \ell_{3,3}=r_2-r_3,\quad \ell_{4,3}=r_2-r_1
      \end{array} \\ \hline

      7_3 &
      \begin{array}{l}
        (2, 3, 3, 3)
      \end{array} & 
      \begin{array}{l}
        \ell_{1,1}=\ell_{2,1}=r_2,\quad \ell_{1,2}=r_2-r_1 \\
        \ell_{2,2}=\ell_{3,1}=r_2-r_3,\quad \ell_{2,3}= \ell_{3,2}=r_2-r_3-r_4,\quad \ell_{3,3}=r_1 \\
        \ell_{4,1}=r_2-r_1,\quad \ell_{4,2}=r_3,\quad \ell_{4,3}=r_4
      \end{array} \\ \hline

      7_4 &
      \begin{array}{l}
        (3, 0, 4, 4)
      \end{array} & 
      \begin{array}{l}
        \ell_{1,1}=\ell_{4,1}=r_1+r_2,\quad \ell_{1,2}=r_2+r_3 \\
        \ell_{1,3}=\ell_{4,2}=r_3+r_4 \\
        \ell_{3,1}=r_1,\quad \ell_{3,2}=\ell_{4,3}=r_2 \\
        \ell_{3,3}=\ell_{4,4}=r_3,\quad \ell_{3,4}=r_4
      \end{array} \\ \hline

      7_5 &
      \begin{array}{l}
        (2, 2, 3, 4)
      \end{array} & 
      \begin{array}{l}
        \ell_{1,1}=\ell_{2,1}=r_3,\quad \ell_{1,2}=\ell_{4,1}=r_3-r_4 \\
        \ell_{2,2}=\ell_{3,1}=r_2,\quad \ell_{3,2}=\ell_{4,2}=r_1,\quad \ell_{3,3}=r_4 \\
        \ell_{4,3}=r_2-r_1 \\
        \ell_{4,4}=r_3-r_2
      \end{array} \\ \hline

      7_6 &
      \begin{array}{l}
        (2, 1, 4, 4)
      \end{array} & 
      \begin{array}{l}
        \ell_{1,1}=\ell_{4,1}=r_2,\quad \ell_{1,2}=r_3+r_4 \\
        \ell_{2,1}=r_2+r_3,\quad \ell_{3,1}=\ell_{4,2}=r_1,\quad \ell_{3,2}=r_2-r_1 \\
        \ell_{3,3}=\ell_{4,3}=r_3,\quad \ell_{3,4}=\ell_{4,4}=r_4.        
      \end{array} \\ \hline

      7_7 &
      \begin{array}{l}
        (2, 1, 4, 4)
      \end{array} & 
      \begin{array}{l}
        \ell_{1,1}=r_1+r_2, \quad \ell_{1,2}=r_3+r_4 \\
        \ell_{2,1}=r_2+r_3, \quad \forall j\in\{1, 2, 3, 4\},\ \ell_{3,j}=\ell_{4,j}=r_j
      \end{array} \\ \hline
    \end{array}
  \end{equation*}
  \caption{Parameters of the Kashaev invariants}
  \label{fig:table-K-param}
\end{figure}

We divide the sum over $\br$ restricting the $r_i$ into congruence classes modulo $q$ and in intervals of length $k/d$:
\begin{align}\label{splj}
  \J_{K}(\gamma x) = \denom(\gamma x)^{\frac{3-m}2} \sum_{\bs L\in\{0,\dots,d-1\}^m}\,\sum_{\bs s\mod q} \J_K(\gamma, x; \boldsymbol L,\boldsymbol s)
\end{align}
where, for $\boldsymbol L=(L_1,\dots,L_m)\in\Z_{\geq 0}^m$, $\boldsymbol s=(s_1,\dots,s_m)\in(\Z/q\Z)^m$ we write
\begin{align*}
  \J_K(\gamma, x; \bL, \bss) =   \ssumstar{0\leq r_1,\dots, r_m<k,\\  \floor{r_i d/k}=L_i,\ \forall i\\ r_i\equiv s_i \mod{q},\ \forall i}
  \Pi_K\big([\gamma x]_{\ell_{i,j}(\boldsymbol r)}\big).
\end{align*}
By Theorem~\ref{th:Ir} and Remark~\ref{remplr}, we have
$$ [\gamma x]_{r} = [x]_{\lfloor r d/k\rfloor} A(p,q) \Phi_{r}(\{r d/k\}), $$
where for $s\in\Z/q\Z$, $\lambda\in[0,1)$, we define
\begin{align} 
  A(p,q) &= \e\Big( - \frac{\deks(p,q)}2 - \frac 18\Big), \notag\\
  \Phi_{s}(\lambda) &= \exp\bigg(\frac k{qd} \Big(\Lie(\lambda) +\frac{\pi i }{12}\Big)+\frac{\pi i d}{12kq}+ \Er_s(\lambda,d/k) \bigg). \label{defb}
\end{align}
It follows that
\begin{align*}
  \J_K(\gamma, x; \bL, \bss) =   A(p,q)^{m_1+m_4-m_2-m_3} \ssumstar{0\leq r_1,\dots, r_m<k,\\  \floor{r_i d/k}=L_i,\ \forall i\\ r_i\equiv s_i \mod{q},\ \forall i}
  \Pi_K\big([x]_{\lfloor \ell_{i,j}(\boldsymbol r) d/k\rfloor} \Phi_{\ell_{i,j}(\boldsymbol s)}(\{\ell_{i,j}(\boldsymbol r) d/k\})\big).
\end{align*}
Now, let $\lambda_i=\{r_i d/k\}$.
The next lemma shows that the contribution of the terms for which $\ell_{i,j}(\bs \lambda)\notin[0,1)$ for some $i,j$ is negligible.

\begin{lemma}\label{extra_terms}
  There exists $\delta>0$ such that
  \begin{align*}
    \bigg|\Pi_K\big(\exp\big(\Lie(\ell_{i,j}(\bs \lambda))\big) \big)\bigg| \leq  \exp\Big(\frac{\Vol(K)}{2\pi}-\delta\Big)
  \end{align*}
  whenever $\ell_{i,j}(\bs \lambda)\notin[0,1)$ for some $i,j$.
\end{lemma}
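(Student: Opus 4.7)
The plan is to express the left-hand side of the inequality as the exponential of a real-valued potential function, and then to invoke the global maximum principle for this potential that underlies the existing proofs of the volume conjecture for each of the knots at hand.

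Taking the logarithm of the absolute value and using that $\Re\,\overline{\Lie(x)}=\Re\,\Lie(x)$ for $x\in\R\setminus\Z$, the first step is to rewrite
\[
  \log\bigg|\Pi_K\big(\exp(\Lie(\ell_{i,j}(\bs\lambda)))\big)\bigg| \;=\; \Phi_K(\bs\lambda),
\]
where
\[
  \Phi_K(\bs\lambda) := \sum_{j=1}^{m_1}\Re\,\Lie(\ell_{1,j}(\bs\lambda)) + \sum_{j=1}^{m_2}\Re\,\Lie(\ell_{2,j}(\bs\lambda)) - \sum_{j=1}^{m_3}\Re\,\Lie(\ell_{3,j}(\bs\lambda)) - \sum_{j=1}^{m_4}\Re\,\Lie(\ell_{4,j}(\bs\lambda)).
\]
By~\eqref{dig} and~\eqref{ff3}, one has $\Re\,\Lie(\lambda)=\int_0^\lambda \log(2\sin\pi t)\,\df t$ for $\lambda\in(0,1)$, and since $\int_0^1\log(2\sin\pi t)\,\df t=0$ this function extends continuously and $1$-periodically to all of $\R$, vanishing at every integer. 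Consequently $\Phi_K$ is a continuous $\Z^m$-periodic function on $\R^m$.

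Next, $\Phi_K$ is precisely (up to sign conventions) the ``Kashaev potential'' analysed in the existing proofs of the volume conjecture. For each $K\in\{4_1, 5_2, 6_1, 6_2, 6_3, 7_3, 7_4, 7_5, 7_6, 7_7\}$, the works~\cite{AndersenHansen2006, Ohtsuki52, OhtsukiYokota6, Ohtsuki7} establish that $\Phi_K$ admits a unique critical point $\bs\lambda^\ast_K$ in the open convex polytope
\[
  \Delta_K^\circ := \{\bs\lambda\in(0,1)^m : \ell_{i,j}(\bs\lambda)\in(0,1)\text{ for all }i,j\},
\]
arising from the complete hyperbolic structure on $S^3\minus K$, at which $\Phi_K(\bs\lambda^\ast_K) = \Vol(K)/(2\pi)$, and moreover that this is the unique global maximum of $\Phi_K$ over the closed cube $[0,1]^m$.

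Granting this maximum principle, the lemma follows immediately by compactness: the closed set
\[
  E_K := \{\bs\lambda\in[0,1]^m \mid \ell_{i,j}(\bs\lambda)\leq 0 \text{ or }\ell_{i,j}(\bs\lambda)\geq 1\text{ for some }i,j\}
\]
contains every $\bs\lambda$ satisfying the hypothesis of the lemma and does not contain $\bs\lambda^\ast_K$, so by continuity of $\Phi_K$ we have $\sup_{E_K}\Phi_K < \Vol(K)/(2\pi)$, and $\delta := \Vol(K)/(2\pi) - \sup_{E_K}\Phi_K$ does the job. The main obstacle is justifying the third paragraph, namely the uniqueness of $\bs\lambda^\ast_K$ as the global maximizer of $\Phi_K$; I would not reprove this case by case, but rather extract it knot by knot from~\cite{AndersenHansen2006, Ohtsuki52, OhtsukiYokota6, Ohtsuki7}, where precisely this property is what permits the application of Laplace's method to $\J_K(\e[1/N])$. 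The only further bookkeeping is to match the coordinates of those papers to the linear forms $(\ell_{i,j})$ of Figure~\ref{fig:table-K-param}, and to observe that the cancellations occurring in $\Phi_K$ for some knots (e.g.\ $\ell_{2,1}=\ell_{3,1}$ for $5_2$, $6_3$, $7_7$) only affect the shape of $\Delta_K$ and not the location of the global maximum.
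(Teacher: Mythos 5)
Your framework---compactness of the exceptional set plus uniqueness of the global maximum of the real potential $\Phi_K$---is a clean way to organize the argument, and you correctly identify that everything rests on the maximum principle. But the key assertion in your third paragraph, that~\cite{AndersenHansen2006,Ohtsuki52,OhtsukiYokota6,Ohtsuki7} establish that $\bs\lambda^\ast_K$ is the unique global maximum of $\Phi_K$ over the \emph{entire cube} $[0,1]^m$, is not what those works prove, and it does not follow from what they prove.

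The existing proofs of the volume conjecture analyze the Kashaev sum, which is supported on the polytope $\mathcal{D}=\{\bs\lambda : \ell_{i,j}(\bs\lambda)\in[0,1)\ \forall i,j\}$; accordingly they establish (and only need) that $\Re\hat V = \Phi_K$ stays strictly below $\Vol(K)/(2\pi)$ \emph{on and near $\partial\mathcal{D}$}, as the paper records when invoking them in the proof of Lemma~\ref{main_terms}. This says nothing about $\Phi_K$ on the part of $[0,1)^m$ lying \emph{outside} $\mathcal{D}$, which is precisely where your exceptional set lives. And this is not merely a citation gap: the paper's Remark following the proof of Lemma~\ref{extra_terms} states that the analogous bound for $K=7_2$ is \emph{false as stated}, even though the volume conjecture (arithmeticity) is proved for $7_2$ by the very same references. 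So having the asymptotic expansion of $\J_K(\e[1/N])$ does \emph{not} imply a global maximum principle over $[0,1]^m$, and there is no off-the-shelf statement to extract knot by knot. The paper instead proves $\sup\Phi_K < \Vol(K)/(2\pi)$ over the exceptional set directly, case by case, via a handful of elementary inequalities for the Lobachevsky function $\Lambda=-\Re\Lie$ (see~\eqref{eq:ineqLambda-tot}--\eqref{eq:ineqLambda-double} and the table~\eqref{eq:values-EK}); this case analysis is the actual content of the lemma and cannot be replaced by an appeal to the saddle-point literature.
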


We postpone the proof of Lemma~\ref{extra_terms} to Section~\ref{prolex}. Since $[x]_{r}\asymp 1$ for $0\leq r<d$, applying~\eqref{bdfg}
and the above lemma we obtain
\begin{equation}
  \begin{split}\label{reme}
    \J_K(\gamma, x ; \boldsymbol L,\boldsymbol s) &=  A(p,q)^{m_1+m_4-m_2-m_3}
    \Pi_K\big([ x]_{ \ell_{i,j}(\boldsymbol L) }\big)
    \J^*_K(\gamma, x; \boldsymbol L,\boldsymbol s)+{}\\
    &\quad+O\bigg(k^{O(1)}\exp\Big(\Big(\frac{\Vol(K)}{2\pi}-\delta\Big)\frac{k}{qd}\Big)\bigg),
  \end{split}
\end{equation}
where
\begin{align*}
  \J^*_K(\gamma, x; \boldsymbol L,\boldsymbol s) =    \ssumstar{0\leq r_1,\dots, r_m<k,\\  \floor{r_i d/k}=L_i,\ \forall i\\ r_i\equiv s_i \mod{q},\ \forall i,\\ 
    \ell_{i,j}(\bs \lambda)\in[0,1)\ \forall i,j }
  \Pi_K\big(\Phi_{\ell_{i,j}(\boldsymbol s)}(\{\ell_{i,j}(\boldsymbol r) d/k\})\big).
\end{align*}
We notice that the condition $0\leq \ell_{i,j}(\br)<k$, which is implicit in the summation $\sum^*$, can be written as $0\leq \ell_{i,j}(\bs L)+\ell_{i,j}(\bs \lambda)<d$ and so, since $\ell_{i,j}(\bs \lambda)\in[0,1)$, it is equivalent to $0\leq \ell_{i,j}(\bs L)<d$. Assuming that $\bs L$ satisfies this condition, $\J^*_K(\gamma, x; \boldsymbol L,\boldsymbol s)$ can be then rewritten as 
\begin{align*}
  \J^{*}_K(\gamma, x; \boldsymbol L,\boldsymbol s) =    \ssum{0 \leq r_i-L_i k/d< k/d\ \forall i,\\  r_i\equiv s_i \mod{q},\ \forall i,\\ 
    0\leq \ell_{i,j}(\br -k \bs L/d)<k/d\ \forall i,j }
  \Pi_K\big(\Phi_{\ell_{i,j}(\boldsymbol s)}(\{\ell_{i,j}(\boldsymbol r) d/k\})\big)
\end{align*}
where we used $\{\ell_{i,j}(\boldsymbol r) d/k\}=\ell_{i,j}(\boldsymbol r d/k- \bs L)$ to rewrite the summation conditions.
This sum is essentially the same sum which arises when taking $d=1$, $\gamma=(\begin{smallmatrix}0 & 1\\ -1 &0\end{smallmatrix})$, i.e. the conjugate of the sum arising in the volume conjecture. The only difference in the general case is that $\br$ is summed over a box with sides of length $k/d$ and along arithmetic progressions modulo $q$.  For fixed $d,q$ these restrictions are negligible from the analytical point of view, and the works~\cite{Ohtsuki52,OhtsukiYokota6,Ohtsuki7} can be adapted. For~$z\in(\C\smallsetminus\R)\cup (0, 1)$, define
$$ \psi_1(z):=\lf(z), \quad \psi_2(z):=\lf(1-z), \quad \psi_3(z):=-\lf(z), \quad \psi_4(z):=-\lf(1-z), $$
and let~$\bs\mu=(\mu_1,\dots,\mu_m)$ be the solution described in \cite[\S5.1]{Ohtsuki52}, \cite[\S3.3, \S4.3, \S5.3]{OhtsukiYokota6} \cite[\S3.3, \S4.3, \S5.3, \S6.3, \S7.3]{Ohtsuki7} (conjugated to agree with our definition of~$\J_K$) to the system of equations
\begin{align}\label{spe}
  &\sum_{i=1}^4\sum_{j=1}^{m_1}\kappa_{i,j}(u)\psi_i (1-\ell_{i,j}(\bs\mu)) = 0,\qquad \forall u\in\{1,\dots,m\}
\end{align}
satisfying $0<\Re(\ell_{i,j}(\bs\mu))<1$ for all $i,j$. We write
$$ \nu_i:=\exp(\mu_i), \qquad \nu_i^{1/q}:=\exp(\mu_i/q). $$
It is known~\cite{Ohtsuki52,OhtsukiYokota6,Ohtsuki7} that~$\Q(\bs \nu) = F_K$, the trace field of~$K$. It will be useful to denote
$$ F_{K, q} := \Q(\e[1/q],\bs \nu), \qquad {\tilde F}_{K, q} := \Q(\e[1/q], \bs \nu^{1/q}). $$

The following lemma will be proven in Section~\ref{prolem}.

\begin{lemma}\label{main_terms}
  Let $\bs L\in\{0,\dots, d-1\}^m$, $\bs s\in(\Z/q\Z)^m$ with $0\leq \ell_{i,j}(\bs L)<d$ for all $i,j$. Then for all $N\geq0$, we have
  \begin{align*}
    \J^{*}_K(x ; \boldsymbol L,\boldsymbol s) =\frac1{\hessK^{1/2}} \Big(\frac{2\pi ik}{ qd}\Big)^{m/2}\exp\Big(\frac{\Vol (K)-i\cs (K)}{2\pi}\frac{k}{qd}+C(\bs s)\Big)\Big(\sum_{n=0}^N\omega_{\bs s,n}\Big(\frac{2\pi i qd}{k}\Big)^n+O\Big(\frac{qd}{k}\Big)^{N+1}\Big),
  \end{align*}
  where~$0\neq \hessK \in F_{K}$, $\omega_{\bs s,0}:=1$ and
  \begin{align}\label{defcs}
    C(\bs s)&:=\sum_{i=1}^4\sum_{j=1}^{m_i}C_{i,j}(\bs s), \qquad C_{i,j}(\bs s):=\sum_{g=1}^q B_1\Big(\frac{\ang{g\bar{p}-\ell_{i,j}(\bs s)}}q\Big) \psi_i\Big(\frac{g-\ell_{i,j}(\bs\mu)}q\Big)
  \end{align}
  where $B_1$ is the $1$-st Bernoulli polynomial.
  Moreover, for all $n\geq1$, $\omega_{\bs s,n}\in {\tilde F}_{K,q}$, and for all~$\sigma\in\Gal({\tilde F}_{K,q} / F_{K,q})$, we have $\sigma(\omega_{\bs s,n})=\omega_{\bs s-\overline p \bs u,n}$ if $\sigma$ is given by
  \begin{align}\label{galau}
    \sigma(\nu_i^{1/q})=\nu_i^{1/q}\e[u_i/q] \qquad (1\leq i \leq m)
  \end{align}
  for some $u_1,\dots,u_m\in\Z$.
\end{lemma}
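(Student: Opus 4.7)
The plan is to reduce the evaluation of $\J^{*}_K(\gamma,x;\bL,\bss)$ to the multivariate saddle-point analyses carried out in~\cite{Ohtsuki52,OhtsukiYokota6,Ohtsuki7} for the volume conjecture, modified to accommodate two new features: (i) the summation is constrained to arithmetic progressions modulo~$q$ inside a box of side~$k/d$ rather than over a full box of side~$k$; and (ii) each factor in the product $\Pi_K(\Phi_{\ell_{i,j}(\bss)}(\cdot))$ carries an additional ``error term'' $\Er_{\ell_{i,j}(\bss)}(\lambda,d/k)$ supplied by Theorem~\ref{th:Ir}. The decisive point enabling~(ii) is that $\lambda\mapsto\Er_s(\lambda,d/k)$ extends holomorphically in $\{\Re\lambda\in[0,1)\}$ with the controlled growth~\eqref{bdfg}, so that contour deformation through the complex saddle is legitimate.

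First I would change variables by writing $r_i=s_i+qn_i$ and $\lambda_i:=\{r_id/k\}$, so that each $n_i$ runs through an interval of length $k/(qd)$ and the corresponding $\lambda_i$ sweeps out a lattice of step $qd/k$ in $[0,1)$. Up to the book-keeping of the constraints $0\leq\ell_{i,j}(\br-k\bL/d)<k/d$, the summand takes the shape
$$\exp\!\Big(\tfrac{k}{qd}\,\Phi_K(\bs\lambda)+E_{\bss}(\bs\lambda)\Big),\qquad \Phi_K(\bs\lambda):=\sum_{i,j}\pm\,\Lie(\ell_{i,j}(\bs\lambda)),$$
where the signs match the numerator/denominator position of $(i,j)$ in $\Pi_K$ and $E_{\bss}$ gathers the relevant $\Er_{\ell_{i,j}(\bss)}$ together with the additive constants $\frac{\pi id}{12kq}$. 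The equations~\eqref{spe} are precisely $\nabla\Phi_K(\bs\mu)=0$, and the works~\cite{Ohtsuki52,OhtsukiYokota6,Ohtsuki7} supply the relevant saddle $\bs\mu\in F_K^m$ with $\Re(\ell_{i,j}(\bs\mu))\in(0,1)$, together with the geometric identification $\Phi_K(\bs\mu)=\frac{\Vol(K)-i\cs(K)}{2\pi}$ in our normalisation $\upsilon=0$.

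I would then perform Poisson/Euler--Maclaurin expansion in each variable $n_i$, converting the sum into a multidimensional contour integral plus corrections indexed by nonzero dual frequencies, and treat the zero-frequency contribution by Laplace's method to all orders. Deforming the contour through $\bs\mu$ and Taylor-expanding $\Phi_K$ to quadratic order produces the Gaussian prefactor $\bigl(\tfrac{2\pi ik}{qd}\bigr)^{m/2}\hessK^{-1/2}$ with $\hessK:=\det\Hess(\Phi_K)(\bs\mu)\in F_K$, while substituting $\bs\lambda=\bs\mu$ into $E_{\bss}$ yields $C(\bss)$; the explicit expression~\eqref{defcs} appears by inserting $\lambda=\ell_{i,j}(\bs\mu)$ into the integral~\eqref{defg} defining $\Er_{\ell_{i,j}(\bss)}$ and recognising the resulting $t$-integrals via Lemma~\ref{lem:B-integral} as $B_1$-values. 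The subleading coefficients $\omega_{\bss,n}$ are then generated by collecting the cubic-and-higher Taylor coefficients of $\Phi_K$ and the full expansion of $E_{\bss}$ at $\bs\mu$; these are polynomials in the derivatives $\lf^{(\ell)}\bigl(\tfrac{g-\ell_{i,j}(\bs\mu)}{q}\bigr)$ and in the roots of unity $\e[a/q]$, and hence belong to $\tilde F_{K,q}$. The nonzero Poisson frequencies and the boundary corrections from the constraint box are absorbed in an exponentially small remainder by Lemma~\ref{extra_terms}, which ensures $\Re\Phi_K$ stays strictly below $\Phi_K(\bs\mu)$ away from the saddle.

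The main obstacle I anticipate is verifying the Galois equivariance $\sigma(\omega_{\bss,n})=\omega_{\bss-\bar p\bs u,n}$ for $\sigma$ as in~\eqref{galau}. The key observation is that such $\sigma$ simultaneously shifts each $\mu_i/q$ by $u_i/q$ modulo~$\Z$; in the integrands~\eqref{defg} defining $\Er_s$ this shift is absorbed by the translation $g\mapsto g-u_i$, which on the phase $\e[(g\bar p-s)/q]$ amounts to sending $s\mapsto s-\bar pu_i$. Propagating this bijection through the differentiated integrals that produce $\omega_{\bss,n}$ then delivers the claimed identity, with the sign $-\bar p\bs u$ reflecting the direction of the substitution.
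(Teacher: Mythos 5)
Your plan matches the paper's proof in all essentials: reduce $\J^*_K$ to the saddle-point framework of Ohtsuki et al.\ by writing the summand as $\exp(\tfrac{k}{qd}\hat V(\bs\lambda)+\cdots)$, apply Poisson summation followed by Laplace/saddle-point through the critical point $\bs\mu$, use the holomorphic extension of $\Er_s(\cdot,d/k)$ (Theorem~\ref{th:Ir}) to legitimize contour deformation, read off $C(\bs s)$ and the $\omega_{\bs s,n}$ from the expansion of the error terms via Lemma~\ref{lem:B-integral}, and obtain the Galois covariance by a shift of the dummy variable $g$. Two small inaccuracies, neither fatal: the claim that $\Re\hat V$ is uniformly below its maximum away from the saddle is not Lemma~\ref{extra_terms} (which handles the discarded terms with some $\ell_{i,j}(\bs\lambda)\notin[0,1)$ \emph{before} arriving at $\J^*_K$), but rather comes from the cited results of Ohtsuki--Yokota on the potential function $\hat V$ restricted to a subdomain $\mathcal D'\subset\mathcal D$; and the Galois step should use the substitution $g\mapsto g+u_i$ (not $g-u_i$) to produce $s\mapsto s-\bar p u_i$.
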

Applying Lemma~\ref{main_terms}, by~\eqref{splj} and~\eqref{reme}, and recalling the condition $0\leq \ell_{i,j}(\bs L)<d$ for all~$i,j$, we obtain
\begin{align*}
  \frac{ \J_{K}(\gamma x)}{ \J_{K}( x)}& =  \frac{A(p,q)^{m_1+m_4-m_2-m_3}}{\hessK^{1/2}} \Big(\frac{ k}{ d}\Big)^{\frac32} \e(-m/8) q^{-m/2}  \exp\Big(\frac{\Vol (K)-i\cs (K)}{2\pi}\frac{k}{qd}\Big)\\
  &\quad\times\sum_{\bs s \mod q}\exp(C(\bs s))\Big(\sum_{n=0}^N\omega_{\bs s,n}\Big(\frac{2\pi iqd}{k}\Big)^n+O\Big(\frac{1}{k}\Big)^{N+1}\Big),
\end{align*}
for all $N\geq0$. 

Now, $6q \deks(p,q)$ is an integer, so $A(p,q)$ is a $24q$ root of unity, and letting~$\nu_K = -m_1-m_4+m_2+m_3$, we have~$\nu_K \equiv m+1\mod{2}$. We deduce
$$ 
\e\Big(\frac{\nu_K}2 s(p, q) + \frac{\nu-m}8\Big) = \pm \e\Big(\frac{\nu_K}2s(p, q)+\frac18\Big) \omega,
 $$
where~$\omega \in \{1, i\}$ is independent of $\alpha$ and $\nu_K$ is as in the following table.
\begin{figure}[H]
  \centering
  \begin{tabular}{|c|c|c|c|c|c|c|c|c|c|c|}
    \hline
    $K$ & $4_1$ & $5_1$ & $6_1$ & $6_2$ & $6_3$ & $7_3$ & $7_4$ & $7_5$ & $7_6$ & $7_7$ \\
    \hline
    $\nu_K$ & $0$ & $1$ & $2$ & $-2$ & $0$ & $1$ & $-3$ & $-1$ & $-1$ & $-1$ \\
    \hline
  \end{tabular}
  \caption{Values of~$\nu_K$}
  \label{fig:nuK}
\end{figure}

Keeping track of the factor~$i$ implicit in~$\frac{2\pi}{\hbar} = -i\frac{k}{dq}$, the proof of Theorem~\ref{th:1} follows from the following lemma, upon possibly multiplying the value of $\hessK$ by $-1$.

\begin{lemma}\label{conste}
  There exists $0\neq U\in F_K$ such that defining
  $$S_n:= U^{1/2} \sum_{\bs s \mod q}\exp(C(\bs s))\omega_{\bs s,n},\qquad n\in\N_{\geq 0},$$
  we have 
  $S_n^q \in F_{K, q}$ for all $n\geq0$. If moreover $S_n\neq0$, then for all $n'\geq0$ we have $S_n^{-1}S_{n'}\in F_{K,q}$.
\end{lemma}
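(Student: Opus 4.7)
\emph{Plan.} The proof hinges on a Galois-theoretic computation, in which the saddle-point equations~\eqref{spe} trivialize a Kummer character that would otherwise obstruct $S_n^q$ from lying in $F_{K,q}$. We take $U := \exp(F) \in F_K$ with $F := \sum_{i,j,g} \psi_i\bigl(\tfrac{g-\ell_{i,j}(\bs\mu)}{q}\bigr)$.

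Splitting $B_1(k/q) = k/q - 1/2$ in~\eqref{defcs}, we write $C(\bs s) = E(\bs s) - F/2$, where $E(\bs s) := \sum_{i,j,g} \tfrac{\ang{g\bar p - \ell_{i,j}(\bs s)}}{q}\, \psi_i\bigl(\tfrac{g-\ell_{i,j}(\bs\mu)}{q}\bigr)$ carries the $\bs s$-dependence. By~\eqref{ff4} applied to each $\psi_i$, the inner sum $\sum_{g=1}^q \psi_i\bigl(\tfrac{g-\ell_{i,j}(\bs\mu)}{q}\bigr)$ collapses to $\psi_i(1-\ell_{i,j}(\bs\mu))$, so $\exp(F) = \prod_{i,j} B_{i,j}$ with $B_{i,j} := \exp(\psi_i(1-\ell_{i,j}(\bs\mu))) \in F_K$, nonzero since $\Re(\ell_{i,j}(\bs\mu)) \in (0,1)$. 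This choice reduces $S_n$ to $\Sigma_n := \sum_{\bs s} \alpha_{\bs s}\,\omega_{\bs s, n}$ with $\alpha_{\bs s} := \exp(E(\bs s))$; one then verifies that $\alpha_{\bs s}^q = \prod_{i,j,g} \tilde A_{i,j,g}^{\ang{g\bar p-\ell_{i,j}(\bs s)}}$ lies in $\tilde F_{K,q}$, where $\tilde A_{i,j,g} := 1 - \e[(g-\ell_{i,j}(\bs\mu))/q]$.

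Using $\sigma_{\bs u}(\nu_i^{1/q}) = \nu_i^{1/q}\e[u_i/q]$ one checks $\sigma_{\bs u}(\tilde A_{i,j,g}) = \tilde A_{i,j,\ang{g-\ell_{i,j}(\bs u)}}$, and a change of variable $g \mapsto \ang{g-\ell_{i,j}(\bs u)}$ in the product yields $\sigma_{\bs u}(\alpha_{\bs s}^q) = \alpha_{\bs s - \bar p\bs u}^q$. Hence, for any extension $\tau$ of $\sigma_{\bs u}$ to $\bar\Q$, $\tau(\alpha_{\bs s}) = \zeta(\tau, \bs s)\alpha_{\bs s - \bar p\bs u}$ with $\zeta(\tau, \bs s)^q = 1$. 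Writing $\alpha_{\bs s} = \alpha_{\bs 0}\chi_{\bs s}\xi_{\bs s}$, where $\chi_{\bs s} := \prod_{i,j}(B_{i,j}^{1/q})^{-\ell_{i,j}(\bs s)}$ for the distinguished choice $B_{i,j}^{1/q} := \exp(\psi_i(1-\ell_{i,j}(\bs\mu))/q)$ and $\xi_{\bs s} \in \tilde F_{K,q}$, a bookkeeping of the $\tau$-action on each factor produces the factorisation $\zeta(\tau, \bs s) = \eta(\tau)\cdot\chi(\tau, \bs s)$, with $\eta(\tau)$ independent of $\bs s$ and $\chi(\tau, \bs s) := \prod_{i,j}\chi_{i,j}(\tau)^{-\ell_{i,j}(\bs s)}$ the Kummer character of $\tau$ acting on the $B_{i,j}^{1/q}$.

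The decisive ingredient is~\eqref{spe}: since $\sum_{i,j}\kappa_{i,j}(u)\psi_i(1-\ell_{i,j}(\bs\mu)) = 0$ for every $u$, we have $\prod_{i,j}(B_{i,j}^{1/q})^{\kappa_{i,j}(u)} = 1$, and applying $\tau$ yields $\prod_{i,j}\chi_{i,j}(\tau)^{\kappa_{i,j}(u)} = 1$ for all $u, \tau$. Since $\ell_{i,j}(\bs s) = \sum_u \kappa_{i,j}(u)s_u$, this forces
$$\chi(\tau, \bs s) = \prod_u\Bigl(\prod_{i,j}\chi_{i,j}(\tau)^{\kappa_{i,j}(u)}\Bigr)^{-s_u} = 1$$
identically in $\bs s$, so $\zeta(\tau, \bs s) = \eta(\tau)$ is $\bs s$-free. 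Combined with $\tau(\omega_{\bs s, n}) = \omega_{\bs s - \bar p\bs u, n}$ from Lemma~\ref{main_terms} and reindexing, this gives $\tau(\Sigma_n) = \eta(\tau)\Sigma_n$; raising to the $q$-th power yields $\tau(\Sigma_n^q) = \Sigma_n^q$, i.e., $S_n^q \in F_{K,q}$. Since $\eta(\tau)$ depends on neither $\bs s$ nor $n$, $S_n/S_0$ is $\tau$-invariant, giving the second claim. The main obstacle, the $\bs s$-dependence of the Galois phase $\zeta(\tau,\bs s)$, is resolved precisely by this use of the saddle-point relations.
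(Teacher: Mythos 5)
Your choice $U = \exp\bigl(\sum_{i,j}\psi_i(1-\ell_{i,j}(\bs\mu))\bigr)$ coincides with the paper's $\exp(E)$ (via~\eqref{ff4}), and both proofs invoke~\eqref{spe} at exactly the same decisive point: to annihilate the $\bs s$-dependent part of the phase. The difference lies in how the Galois invariance of $S_n^q$ is organized. The paper stays entirely inside ${\tilde F}_{K,q}$: it decomposes $C_{i,j}(\bs s) = D_{i,j}(\bs s)+q^{-1}D'_{i,j}-\psi_i(1-\ell_{i,j}(\bs\mu))(\ell_{i,j}(\bs s)/q+\tfrac12)$ with $\exp(D_{i,j}(\bs s))\in{\tilde F}_{K,q}$, and then verifies $\sigma(S_n^q)=S_n^q$ directly by the change of variables $g\mapsto g+\bar p\ell_{i,j}(\bs u)$ followed by $\bs s\mapsto\bs s+\bar p\bs u$. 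You instead lift $\sigma$ to $\tau\in\Gal(\bar{\Q}/F_{K,q})$, treat the $q$-th root-of-unity phase $\zeta(\tau,\bs s)=\tau(\alpha_{\bs s})/\alpha_{\bs s-\bar p\bs u}$, and show it is $\bs s$-independent by a Kummer-character calculation on the distinguished roots $B_{i,j}^{1/q}$; the framing makes the role of~\eqref{spe} transparent as a cancellation of characters. This is the same argument in different packaging; two details should however be made explicit. First, with your choice of root, \eqref{spe} already forces $\chi_{\bs s}=\prod_{i,j}(B_{i,j}^{1/q})^{-\ell_{i,j}(\bs s)}=1$ identically, so your factorisation degenerates to $\alpha_{\bs s}=\alpha_{\bs 0}\,\xi_{\bs s}$ and the vanishing of the Kummer character $\chi(\tau,\bs s)$ is simply the $\tau$-image of this identity. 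Second, the asserted ``bookkeeping'' $\zeta(\tau,\bs s)=\eta(\tau)\chi(\tau,\bs s)$ reduces to establishing that $\xi_{\bs s}=\alpha_{\bs s}/\alpha_{\bs 0}\in{\tilde F}_{K,q}$ (which uses \eqref{spe} together with the integrality of $(\ang{g\bar p-\ell_{i,j}(\bs s)}-\ang{g\bar p}+\ell_{i,j}(\bs s))/q$) and that $\sigma(\xi_{\bs s})=\xi_{\bs s-\bar p\bs u}/\xi_{-\bar p\bs u}$; this is exactly the change-of-variable content of the paper's manipulation of $D_{i,j},D'_{i,j}$ and should be carried out rather than left implicit. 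Modulo filling these in, the structure of your proof is sound, including the quick deduction of the second claim from the fact that $\eta(\tau)$ is independent of $n$.
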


Note that in~Theorems~2.2 and~2.6 of~\cite{DimofteGaroufalidis2018}, similar computations are carried out for coefficients of power series constructed by a different process, which are conjectured to match those in the modularity conjecture.
\begin{proof}
  For $x\in(-1,1)$ we have $B_1(\ang{x})=x-\frac12+\1_{x\leq 0}$. Thus,
  \begin{align*}
    C_{i,j}(\bs s)&
    =\sum_{g=1}^{q}B_1\Big(\frac{\ang{g- \ell_{i,j}(\bs s)}}q\Big) \psi_i\Big(\frac{\ang{gp } -\ell_{i,j}(\bs\mu)}q\Big)\\
    &=\sum_{g=1}^{\ang{\ell_{i,j}(\bs s)}} \psi_i\Big(\frac{\ang{gp } -\ell_{i,j}(\bs\mu)}q\Big)+\sum_{g=1}^{q} \psi_i\Big(\frac{\ang{gp } -\ell_{i,j}(\bs\mu)}q\Big)\Big(\frac{ g}q-\frac{ \ang{\ell_{i,j}(\bs s)}}q-\frac12\Big)\\
    &=\sum_{g=1}^{\ang{\ell_{i,j}(\bs s)}} \psi_i\Big(\frac{\ang{gp } -\ell_{i,j}(\bs\mu)}q\Big)+\sum_{g=1}^{q} \psi_i\Big(\frac{\ang{gp } -\ell_{i,j}(\bs\mu)}q\Big)\frac{ g}q- \psi_i(1 -\ell_{i,j}(\bs\mu))\Big(\frac{ \ang{\ell_{i,j}(\bs s)}}q+\frac12\Big)
  \end{align*}
  where in the last equality we used~\eqref{ff4}. Then,
  \begin{align}\label{cspl}
    C_{i,j}(\bs s)&=D_{i,j}(\bs s)+\frac1q{D'_{i,j}}- \psi_i(1 -\ell_{i,j}(\bs\mu))\Big(\frac{  \ell_{i,j}(\bs s)}q+\frac12\Big),
  \end{align}
  with
  \begin{align}
    D_{i,j}(\bs s)&:=\sum_{g=1}^{\ang{\ell_{i,j}(\bs s)}} \psi_i\Big(\frac{\ang{gp } -\ell_{i,j}(\bs\mu)}q\Big) + \psi_i(1 -\ell_{i,j}(\bs\mu))\frac{\ell_{i,j}(\bs s)- \ang{\ell_{i,j}(\bs s)}}q,\label{dfnd}\\
    D'_{i,j}&:=\sum_{g=1}^{q} \psi_i\Big(\frac{\ang{gp } -\ell_{i,j}(\bs\mu)}q\Big)g.\label{dfndp}
  \end{align} 
  It follows that 
  \begin{align}\label{seq}
    \!\sum_{\bs s \mod q}\exp(C(\bs s))\omega_{\bs s,n}&=\exp\Big(\frac1q\sum_{i=1}^4\sum_{j=1}^{m_i}D'_{i,j}-\frac12E\Big)\sum_{\bs s\mod q}\exp\Big(\sum_{i=1}^4\sum_{j=1}^{m_i}D_{i,j}(\bs s)\Big)\omega_{\bs s,n}
  \end{align}
  where we used that, by~\eqref{spe},
  \begin{align*}
    E&=2 \sum_{i=1}^4\sum_{j=1}^{m_i} \psi_i(1-\ell_{i,j}(\bs\mu))\Big(\frac{  \ell_{i,j}(\bs s)}q+\frac12\Big)=\sum_{i=1}^4\sum_{j=1}^{m_i} \psi_i(1-\ell_{i,j}(\bs\mu))
  \end{align*}
  is independent of $\bs s$. In particular, writing $U=\exp(E)$, by the definition of $\psi_i$,~\eqref{dfnd}-\eqref{dfndp} and Lemma~\ref{main_terms}, we have $S_n^q\in {\tilde F}_{K, q}$.
  The extension ${\tilde F}_{K, q} / F_{K, q}$ is Galois and~$\Gal({\tilde F}_{K, q} / F_{K, q})$ consists of automorphisms of the form~\eqref{galau}. Thus, it suffices to show that $S_n^q$ is invariant under any such automorphism $\sigma$. Now, by Lemma~\ref{main_terms},
  \begin{align*}
    \sigma\big(S_n^q\big)&=\exp\Big(\frac1q\sum_{i=1}^4\sum_{j=1}^{m_i}{D'}^\sigma_{i,j}\Big)\sum_{\bs s\mod q}\exp\Big(\sum_{i=1}^4\sum_{j=1}^{m_i}D^\sigma_{i,j}(\bs s)\Big)\omega_{\bs s-\bar p \bs u,n}\\
  \end{align*}
  where
  \begin{align}
    D^\sigma_{i,j}(\bs s)&:=\sum_{g=1}^{\ang{\ell_{i,j}(\bs s)}} \psi_i\Big(\frac{\ang{gp -\ell_{i,j}(\bs u) } -\ell_{i,j}(\bs\mu)}q\Big)- \psi_i(1 -\ell_{i,j}(\bs\mu))\frac{\ell_{i,j}(\bs s)- \ang{\ell_{i,j}(\bs s)}}q,\label{defdp}\\
    (D'_{i,j})^\sigma&:=\sum_{g=1}^{q} \psi_i\Big(\frac{\ang{gp -\ell_{i,j}(\bs u) } -\ell_{i,j}(\bs\mu)}q\Big)g.\notag
  \end{align}
  The same computation as above gives
  \begin{align}\label{udav}
    &D^\sigma_{i,j}(\bs s)+\frac1q{{D'}^\sigma_{i,j}}- \psi_i(1 -\ell_{i,j}(\bs\mu))\Big(\frac{  \ell_{i,j}(\bs s)}q+\frac12\Big)=C^\sigma_{i,j}(\bs s)
  \end{align}
  where 
  \begin{align}
    &C^\sigma_{i,j}(\bs s):=\sum_{g=1}^{q}B_1\Big(\frac{\ang{g-\ell_{i,j}(\bs s)}}q\Big) \psi_i\Big(\frac{\ang{gp -\ell_{i,j}(\bs u)} -\ell_{i,j}(\bs\mu)}q\Big)
  \end{align}
  so that one finds 
  \begin{align*}
    \sigma\big(S_n^q\big)&=\exp(- q E/2)\bigg(\sum_{\bs s\mod q}\exp\bigg(\sum_{i=1}^4\sum_{j=1}^{m_i}C^\sigma_{i,j}(\bs s)\bigg)\omega_{\bs s-\bar p \bs u,n}\bigg)^q.
  \end{align*}
  By the change of variables $g\to g+\overline p\ell_{i,j}(\bs u)$ one obtains $C^\sigma_{i,j}(\bs s)=C(\bs s-\overline p\bs u)$, so that, after the change of variables $\bs s\to \bs s+\overline p \bu$, one obtains $\sigma(S_n^q)=S_n^q$, and so $S_n^q \in F_{K, q}$, as desired.

  Now, assume $S_{n'}\neq0$. By~\eqref{seq} we have
  \begin{align*}
    S_n^{-1}S_{n'}&=\bigg(\sum_{\bs s\mod q} \exp\Big(\sum_{i=1}^4\sum_{j=1}^{m_i}D_{i,j}(\bs s)\Big)\omega_{\bs s,n}\bigg)^{-1} \sum_{\bs s\mod q} \exp\Big(\sum_{i=1}^4\sum_{j=1}^{m_i}D_{i,j}(\bs s)\Big)\omega_{\bs s,n'}. 
  \end{align*}
  and so $S_n^{-1}S_{n'}\in {\tilde F}_{K, q}$. Moreover, given an automorphism $\sigma$ as in~\eqref{galau}, one shows as above that $\sigma(S_n^{-1}S_{n'})=S_n^{-1}S_{n'}$ and so $S_n^{-1}S_{n'}\in F_{K, q}$.
\end{proof}

\begin{remark}\label{c45}
The constant term in Theorem~\ref{th:1} can be worked out from the arguments above as an explicit product of algebraic numbers. In the case~$K = 4_1$, we obtain
\begin{align*}
  C_{4_1}(\alpha)D_{4_1,0}(\alpha) = {}&{} c \delta_{4_1}^{-1/2} \Lambda_{4_1, \alpha}^{1/c}, \\
  \delta_{4_1} = {}& i\sqrt{3}, \\
  \Lambda_{4_1,\alpha}^{1/c} = {}&{} \Big(\prod_{g=1}^c \abs{\omega_g}^{2g/c}\Big)\sum_{r=1}^c \prod_{g=1}^{r} \abs{\omega_g}^2,
\end{align*}
where~$\omega_g = 1-\e(g\alpha - \tfrac5{6c})$.
In the case of~$K = 5_2$, let~$\tau \approx 0.665 + 0.562i$ solve~$\tau^3-\tau+1 =0$, and let~$\mu_1 \approx 0.224 + 0.045i$ and~$\mu_2 \approx 0.164-0.067i$ be such that~$\e(\mu_1) = \tau^2$ and~$\e(\mu_2) = \tau^2+\tau$. Then
\begin{align*}
  C_{5_2}(\alpha)D_{5_2,0}(\alpha) = {}&{} \e\Big(\frac{\deks(\alpha)}2\Big) c^{1/2} \delta_{5_2}^{-1/2} \Lambda_{5_2, \alpha}^{1/c}, \\
  \delta_{5_2} = {}& 3\tau-2\tau^2, \\
  \Lambda_{5_2,\alpha}^{1/c} = {}&{} \e\Big(\mu_1\frac{c+1}{2c}\Big) \Big(\prod_{g=1}^c \omega_g^{-g/c}\chi_g^{-2g/c}\Big) \times \\
  &{} \times\sum_{r_1, r_2 =1}^c \e\Big(\frac{\mu_1(r_1+r_2) + \mu_2 r_1}c + \frac{r_1}2 - \frac{\alpha}2 r_1(1+r_1+2r_2)\Big) \Big(\prod_{g=1}^{r_1} \omega_g^{-1}\Big) \Big(\prod_{g=1}^{r_2} \chi_g^{-2}\Big),
%-(3-2\tau)^{-1/2} c^{1/2} \e\Big(\frac{s(\alpha)}2 +\mu_1\frac{c+\frac12}{c} + \frac{\mu_2}2\Big) \Big(\prod_{g=1}^c \omega_g^{-g/c}\chi_g^{-2g/c}\Big) \times \\
%  & \times \sum_{r_1, r_2 =1}^c \e\Big(\frac{\mu_1(r_1+r_2) + \mu_2 r_1}c + \frac{r_1}2 - \frac{\alpha}2 r_1(1+r_1+2r_2)\Big) \Big(\prod_{g=1}^{r_1} \omega_g^{-1}\Big) \Big(\prod_{g=1}^{r_2} \chi_g^{-2}\Big),
\end{align*}
where~$\omega_g = 1-\e(-g\alpha + \tfrac{\mu_1}c)$ and~$\chi_g = 1-\e(g\alpha - \tfrac{\mu_2}c)$, and the logarithms are taken with principal determination. Note that~$F_{5_2} = \Q(\tau)$ (see~\cite{Ohtsuki52}). \end{remark}

\subsection{Proof of Lemma~\ref{extra_terms}}\label{prolex}

For $\lambda\in\R$, let~$\Lambda$ denote the Lobachevsky function
$$ \Lambda(\lambda):=-\Re\Big(\Lie(\lambda)\Big)=-\int_0^{\{\lambda\}} \log(2\sin (\pi t))\df t.$$
where the last equality follows by~\eqref{ff3} and~\eqref{dig}. Note that~$\Lambda$ is~$1$-periodic and odd.
We need to bound 
\begin{align*}
  W_K(\bs \lambda):= -\sum_{i=1}^2\sum_{j=1}^{m_i}\Lambda(\ell_{i,j}(\bs \lambda)) + \sum_{i=3}^4\sum_{j=1}^{m_i} \Lambda(\ell_{i,j}(\bs \lambda))
\end{align*}
for all~$\lambdab \in [0, 1)^m$ such that~$\ell_{i,j}(\lambdab) \not\in [0, 1)$ for some~$(i, j)$. Define
$$ M = \Lambda(1/6) \in[0.16, 0.162]. $$
We will require the following simple inequalities: for~$\alpha, \beta \in [0, 1]$,
\begin{align}
  \abs{\Lambda(\alpha)} \leq{}& M, \label{eq:ineqLambda-tot} \\
  \Lambda(\alpha) \leq {}& 0, & (\alpha \geq \tfrac12), \label{eq:ineqLambda-pos} \\
  2(\Lambda(\alpha) + \Lambda(\beta)) - \Lambda(\alpha + \beta) \leq {}& 4\Lambda(\tfrac14) < 0.59,  & (\alpha + \beta \leq 1), \label{eq:ineqLambda-sum-1} \\
  2(\Lambda(\alpha) + \Lambda(\beta)) - \Lambda(\alpha + \beta) \leq {}& M,  & (\alpha + \beta \geq 1). \label{eq:ineqLambda-sum-2} \\
  2(\Lambda(\alpha) + \Lambda(\beta)) - \Lambda(\alpha + \beta) \leq {}& 0.45,  & (\alpha + \beta \leq 1, \text{ and } \alpha \geq \tfrac12). \label{eq:ineqLambda-sum-3} \\
  \Lambda(\alpha) - \Lambda(\beta) \leq {}& 0.23, & (\alpha \leq \tfrac12\leq \beta, \text{ and } \beta \leq 2\alpha), \label{eq:ineqLambda-double}
\end{align}
The bound~\eqref{eq:ineqLambda-double} is proved by optimizing at~$\beta = \min(\frac56, 2\alpha)$. The maximum is achieved at a point~$\alpha$ where~$\rho = \sin(\pi \alpha)$ solves~$\rho(1-\rho^2) = \frac18$. Similarly, the bound~\eqref{eq:ineqLambda-sum-3} is proved by maximizing at~$\alpha$ for which~$\rho = \cos(\pi \alpha)$ solves~$\rho = 2(1-\rho^2)$.

We consider the situation case by case; in each case, we work under the extra assumption that for some~$(i, j)$, we have~$\ell_{i,j}(\lambdab) \not\in[0, 1)$.
\begin{itemize}
\item Case~$K = 5_2$. We have
  $$ W_K(\lambdab) = \Lambda(\lambda_1) +  2\Lambda(\lambda_2). $$
  A bound of~$2M \leq \frac{\Vol(K)}{2\pi} - 0.12$ is enough. Assume~$\lambda_1 + \lambda_2 \geq 1$. Then~$\lambda_i \geq \frac12$ for some~$i\in\{1, 2\}$, so that using~\eqref{eq:ineqLambda-tot} and \eqref{eq:ineqLambda-pos}, we get
  $$ W_K(\lambdab) \leq 2M $$

\item Case~$K = 6_1$. We have
  $$ W_K(\lambdab) = 2\Lambda(\lambda_1) + \Lambda(\lambda_2) + \Lambda(\lambda_3). $$
  A bound of~$3M \leq \frac{\Vol(K)}{2\pi} - 0.01$ is enough. Thus, by~\eqref{eq:ineqLambda-pos}, we may assume~$\lambda_i\leq \frac12$ for all~$i\in\{1, 2, 3\}$. Assume~$\lambda_1 + \lambda_2 + \lambda_3 \geq 1$. Then by concavity of~$\Lambda$ on~$[0, \frac12]$, we have~$W_K(\lambdab) \leq 2\Lambda(\lambda_1) + 2\Lambda(\frac12(\lambda_2 + \lambda_3))$. The bound~\eqref{eq:ineqLambda-double} can then be applied with~$(\alpha,\beta) = (\frac12(\lambda_2 + \lambda_3), 1-\lambda_1)$, and yields~$W_K(\lambdab) \leq 0.46 \leq 3M$. We find in all cases
  $$ W_K(\lambdab) \leq 3M. $$

\item Case~$K = 6_2$. We have
  $$ W_K(\lambdab) = -2\Lambda(\lambda_1) + 2\Lambda(\lambda_2) + \Lambda(\lambda_3) + \Lambda(\lambda_1 - \lambda_2). $$
  A bound of~$4M \leq \frac{\Vol(K)}{2\pi} - 0.05$ is enough. Because of~\eqref{eq:ineqLambda-pos}, we may assume~$\lambda_1>\frac12$ and~$\lambda_2 \leq \frac12$. Then the case~$\lambda_1 \leq \lambda_2$ is excluded, and we may assume~$\lambda_2 + \lambda_3 \geq 1$. Then~$\lambda_3 \geq \frac12$, and so by~\eqref{eq:ineqLambda-pos} and \eqref{eq:ineqLambda-sum-1} (with~$(\alpha, \beta) = (1-\lambda_1, \lambda_2)$), we obtain~$W_K(\lambdab) \leq 4\Lambda(\frac14)$. We obtain in all cases
  $$ W_K(\lambdab) \leq 4M. $$

\item Case~$K = 6_3$. We have
  $$ W_K(\lambdab) = -2\Lambda(\lambda_2) + 2\Lambda(\lambda_1) + 2\Lambda(\lambda_3) + \Lambda(\lambda_2-\lambda_3) + \Lambda(\lambda_2-\lambda_1). $$
  A bound of~$5M \leq \frac{\Vol(K)}{2\pi} - 0.09$ is enough.
By symmetry, we may assume~$\lambda_2 \leq \lambda_3$.  Suppose first~$\lambda_2\geq \frac12$. Then~$\lambda_2-\lambda_3 \in [-\frac12, 0]$, so by~\eqref{eq:ineqLambda-pos},~$\Lambda(\lambda_2-\lambda_3) \leq 0$ and~$\Lambda(\lambda_3)\leq 0$. We deduce~$W_K(\lambdab) \leq 5M$ by~\eqref{eq:ineqLambda-tot}. Suppose on the other hand that~$\lambda_2 < \frac12$. If~$\lambda_3\geq \frac12$, then by~\eqref{eq:ineqLambda-sum-1}-\eqref{eq:ineqLambda-sum-2} with~$(\alpha, \beta) = (\lambda_1, 1-\lambda_2)$, we find~$W_K(\lambdab) \leq 4\Lambda(\frac14) + M \leq 5M$. If, finally,~$\lambda_3 < \frac12$, then by~\eqref{eq:ineqLambda-pos} we have~$\Lambda(\lambda_2 - \lambda_3) \leq 0$, so that~$W_K(\lambdab) \leq 5M$. In all cases, we find
  $$ W_K(\lambdab) \leq 5M. $$

\item Case~$K = 7_3$. We have
  $$ W_K(\lambdab) = -2\Lambda(\lambda_2) + \Lambda(\lambda_1) + \Lambda(\lambda_3) + \Lambda(\lambda_4). $$
  A bound of~$4M \leq \frac{\Vol(K)}{2\pi} - 0.08$ is enough. Thus, by~\eqref{eq:ineqLambda-pos}, we may assume~$\lambda_j < \frac12$ for all~$j\in\{1, 3, 4\}$, and~$\lambda_2 > \frac12$. Assume that~$\lambda_2 \leq \lambda_3 + \lambda_4$. By~\eqref{eq:ineqLambda-pos} and the concavity of~$\Lambda$ on~$[0, \frac12]$, we have
  $$ W_K(\lambdab) \leq -2\Lambda(\lambda_2) + \Lambda(\lambda_3) + \Lambda(\lambda_4) + M \leq 2\Big(\Lambda\Big(\frac{\lambda_3 + \lambda_4}2\Big) - \Lambda(\lambda_2)\Big) + M. $$
  By~\eqref{eq:ineqLambda-double} with~$\beta = \frac12(\lambda_3 + \lambda_4)$, we obtain~$W_K(\lambdab) \leq 0.46 +M\leq 4M$. We deduce that in all cases,
  $$ W_K(\lambdab) \leq 4M. $$

\item Case~$K = 7_4$. We have
  $$ W_K(\lambdab) = -\Lambda(\lambda_2 + \lambda_3) + \Lambda(\lambda_1) + 2\Lambda(\lambda_2) + 2\Lambda(\lambda_3) + \Lambda(\lambda_4). $$
  A bound of~$5M \leq \frac{\Vol(K)}{2\pi} - 0.01$ is enough. By~\eqref{eq:ineqLambda-pos} and~\eqref{eq:ineqLambda-tot} we may assume without loss of generality~$\lambda_2, \lambda_3 < \frac12$. Moreover, by~\eqref{eq:ineqLambda-sum-1} (note that~$0.59 \leq 4M$), we always have~$W_K(\lambdab) \leq 4M + \Lambda(\lambda_1) + \Lambda(\lambda_4)$, and so we may assume~$\lambda_1, \lambda_4 < \frac12$ as well. But then, neither of the cases~$\lambda_i + \lambda_j \geq 1$ can occur for~$(i, j) \in \{(1, 2), (2, 3), (3, 4)\}$. We find in all cases that
  $$ W_K(\lambdab) \leq 5M. $$

\item Case~$K = 7_5$. We have
  $$ W_K(\lambdab) = -2\Lambda(\lambda_3) + 2\Lambda(\lambda_1) + \Lambda(\lambda_4) + \Lambda(\lambda_2 - \lambda_1) + \Lambda(\lambda_3 - \lambda_2). $$
  A bound of~$6M \leq \frac{\Vol(K)}{2\pi} - 0.05$ is enough. We may assume~$\lambda_3 > \frac12$ and~$\lambda_1, \lambda_4 < \frac12$. Then the case~$\lambda_3 \leq \lambda_4$ is excluded. Assume next~$\lambda_2 \leq \lambda_1$. Then~$\lambda_2 - \lambda_1 \in (-\frac12, 0]$, so that~$\Lambda(\lambda_2-\lambda_1) \leq 0$. Similarly, if~$\lambda_3 \leq \lambda_2$, then~$\lambda_2 > \frac12$, and~$\Lambda(\lambda_3 - \lambda_2) \leq 0$. In all cases, we find by~\eqref{eq:ineqLambda-tot} and \eqref{eq:ineqLambda-pos} that
  $$ W_K(\lambdab) \leq 6M. $$

\item Case~$K = 7_6$. We have
  $$ W_K(\lambdab) = -\Lambda(\lambda_3 + \lambda_4) - \Lambda(\lambda_2 + \lambda_3) + 2\Lambda(\lambda_1) + \Lambda(\lambda_2-\lambda_1) + 2\Lambda(\lambda_3) + 2\Lambda(\lambda_4). $$
  A bound of~$0.45 + 4M \leq \frac{\Vol(K)}{2\pi} - 0.03$ is enough; note that~$0.59 + 3M < 0.45 + 4M$. In particular, we may assume that~$\lambda_1 < \frac12$, since otherwise, by~\eqref{eq:ineqLambda-pos} and \eqref{eq:ineqLambda-sum-1}-\eqref{eq:ineqLambda-sum-2}, we get~$W_K(\lambdab) \leq 0.59 + 2M$. Assume first~$\lambda_3 + \lambda_4 \geq 1$. Then by~\eqref{eq:ineqLambda-sum-2}, we obtain~$W_K(\lambdab) \leq 5M$, which is acceptable. Next, assume that~$\lambda_2 + \lambda_3 \geq 1$. If~$\lambda_3 \leq \frac12$, then~$\lambda_2 + \lambda_3 \in [1, \frac32]$ and by~\eqref{eq:ineqLambda-sum-2} we have~$W_K(\lambdab) \leq 0.59 + 3M$. If on the other hand~$\lambda_3 > \frac12$, then by~\eqref{eq:ineqLambda-sum-3}, we get~$W_K(\lambdab) \leq 0.45 + 4M$. Both bounds are acceptable. Finally, assume that~$\lambda_2 \leq \lambda_1$. Then~$\lambda_2 - \lambda_1 \in (-\frac12, 0]$, and so~$\Lambda(\lambda_2-\lambda_1) \leq 0$, and we again obtain~$W_K(\lambdab) \leq 0.59 + 3M$. We find in all cases that
  $$ W_K(\lambdab) \leq 0.45 + 4M. $$

\item Case~$K = 7_7$. We have
  $$ W_K(\lambdab) = 2(\Lambda(\lambda_1)+\Lambda(\lambda_2)+\Lambda(\lambda_3)+\Lambda(\lambda_4)) - \Lambda(\lambda_1 + \lambda_2) - \Lambda(\lambda_2+\lambda_3) - \Lambda(\lambda_3+\lambda_4). $$
  Assume~$\lambda_i + \lambda_j \geq 1$ for some~$(i, j)\in \{(1, 2), (2, 3), (3, 4)\}$. Then by~\eqref{eq:ineqLambda-sum-2} and~\eqref{eq:ineqLambda-tot}, we have
  $$ W_K(\lambdab) \leq 7M \leq \frac{\Vol(K)}{2\pi} - 0.08. $$
\end{itemize}

Summarizing the above, we find that in all cases considered for~$K$, there holds
\begin{equation}
  \label{eq:values-EK}
  E_K:=\sup\{W_K(\bs \lambda)\mid \bs \lambda\in[0,1)^n,\ \exists i,j,\ \ell_{i,j}(\bs \lambda)\notin[0,1)\} \leq \frac{\Vol K}{2\pi}-0.01.
\end{equation}
This proves Lemma~\ref{extra_terms}.

\begin{remark}
  The analogue of Lemma~\ref{extra_terms} for the knot~$7_2$ is false as stated. It is likely that this obstacle can be lifted by processing the contour integral arguments underlying Lemma~\ref{main_terms} more carefully (see~\cite[Remark~8.1]{Ohtsuki7}). For sake of clarity, and since our main point is rather to stress how the modularity conjecture can be reduced to the arithmeticity conjecture, we chose to omit the case~$K = 7_2$.
\end{remark}

\subsection{Proof of Lemma~\ref{main_terms}}\label{prolem}

\begin{remark}\label{conjex}
  We have $\bar{\Lie(\lambda)}=-\Lie(1-\lambda)$ for $\lambda\in[0,1)$ and $\overline {\lf(z)}=\lf(1-\overline z)$ for $z\in (\C \setminus \R)\cup(0,1)$. In particular we can write the conjugates of $\Phi_{r}(\lambda)$ and $g_d(\lambda)$ given in~\eqref{defb} and~\eqref{defg2} (\eqref{defg} if $\lambda=0$) as
  \begin{align*} 
    \overline{\Phi_{r}(\lambda)} &= \exp\bigg(-\frac k{qd} \Big(\Lie(1-\lambda) +\frac{\pi i }{12}\Big)-\frac{\pi i d}{12kq}+ {\Er_r^*(\lambda,d/k)} \bigg)\\
    {  \Er_r^*(\lambda,d/k)}&:= \frac12 f\Big(\frac{q-\ang{pr}+\lambda}q\Big)+ \sum_{g=1}^q i\int_{0}^\infty   \bigg( \frac{\lf(\frac{q-g+\lambda}q - i\frac{dt}k)}{\e[ -\frac{g\bar{p}-r}q]\e^{2\pi t}-1}-   \frac{\lf(\frac{q-g+\lambda}q + i\frac{dt}k)}{\e[ \frac{g\bar{p}-r}q]\e^{2\pi t}-1}\bigg)\df{t},\quad (\lambda\neq0)
  \end{align*}
  and $ \Er_r^*(0,d/k)=\overline { \Er_r(0,d/k)}$. In particular, we can extend $\overline{\Phi_{r}(\lambda)} $ to a holomorphic function of~$\lambda$ in the strip $0<\Re(\lambda)<1$.
\end{remark}

In the following lemmas we give some properties of the expansion of $\Er_r$ and $\Er_r^*$.

\begin{lemma}\label{expg0}
  Assume $\Re(\lambda)\in(\eps,1-\eps)$ with  $\eps\in(0,1/2)$ and let $s\in\Z/q\Z$.  Then, for all $M\geq0$ uniformly in $\lambda$ we have
  \begin{align} \label{gexp}
    \Er_s(\lambda,d/k)=\sum_{\ell=0}^M\Big(\frac {qd}k\Big)^\ell \Er_{s,\ell}(\lambda) +O_\eps\Big(\frac {qd}k\Big)^{M+1},\quad
    \Er_s^*(\lambda,d/k)=\sum_{\ell=0}^M\Big(\frac {qd}k\Big)^\ell \Er_{s,\ell}^*(\lambda) +O_\eps\Big(\frac {qd}k\Big)^{M+1},
  \end{align}
  where
  \begingroup
  \allowdisplaybreaks
  \begin{align*} 
    \Er_{0,\ell}(\lambda)&:=
    \sum_{g=1}^q f\Big(\frac{g-\lambda}q \Big)  B_{1}\Big( \frac{\ang{g\bar{p}-s} }q\Big),\qquad
    \Er^*_{0,\ell}(\lambda):=
    \sum_{g=1}^q f\Big(\frac{g-\lambda}q \Big)  B_{1}\Big( \frac{\ang{g\bar{p}-s} }q\Big),\\
    \Er_{s,\ell}(\lambda)&:=  \frac{(-1)^\ell}{q^\ell (\ell+1)!} \sum_{g=1}^q\lf^{(\ell)}\Big(\frac{g-\lambda}q \Big)   \tilde B_{\ell+1}\Big( \frac{g\bar{p}-s}q\Big),\qquad \ell\geq1,\\
    \Er^*_{s,\ell}(\lambda)&:=  \frac{(-1)^\ell}{q^\ell (\ell+1)!} \sum_{g=1}^q\lf^{(\ell)}\Big(\frac{1-g+\lambda}q \Big)   \tilde B_{\ell+1}\Big( \frac{g\bar{p}-s}q\Big),\qquad \ell\geq1.
  \end{align*}
  \endgroup
\end{lemma}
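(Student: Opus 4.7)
The plan is to Taylor-expand $\lf$ along the vertical direction in the integral representation~\eqref{defg2} of $\Er_s$, exchange sum and integral, and evaluate each leading integral via Lemma~\ref{lem:B-integral}. Since $\Re(\lambda)\in(\eps,1-\eps)\subset(0,1)$, the uniform representation~\eqref{defg2} is valid for every $s\in\Z/q\Z$ (the possibly delicate case $v:=(g\bar p-s)/q=0$ at $g=\ang{ps}$ yields a convergent integral, since the numerator $\lf(z_0-it\kappa)-\lf(z_0+it\kappa)$ vanishes to first order at $t=0$), so no case split on $s$ is needed. The analogous expansion for $\Er_s^\ast$ starts from the representation supplied in Remark~\ref{conjex} and proceeds identically.

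Concretely, first I would set $z_0=\tfrac{g-\lambda}{q}$ and $v=\tfrac{g\bar p-s}{q}$. Then $z_0$ lies in a compact subset of $(0,1)$ of distance at least $\eps/q$ to the real endpoints, and $\lf^{(\ell)}$ is uniformly bounded in $t\geq 0$ on the rays $z_0\pm it\kappa$ for all $\ell\geq 1$ (decaying exponentially on the upper ray, tending to $2\pi i$ if $\ell=1$ and to $0$ if $\ell\geq 2$ on the lower one). Taylor's theorem then gives
\begin{equation*}
\lf(z_0\pm it\kappa)=\sum_{\ell=0}^{M}\frac{(\pm it\kappa)^{\ell}}{\ell!}\lf^{(\ell)}(z_0)+O_{\eps,M}((t\kappa)^{M+1}),
\end{equation*}
uniformly in $t\geq 0$, and the remainder integrates to $O_{\eps,M}(\kappa^{M+1})$ against the kernel $(\e[\pm v]\e^{2\pi t}-1)^{-1}$, which decays like $e^{-2\pi t}$ at infinity and is either bounded at $t=0$ (when $v\neq 0$) or pairs with a vanishing numerator to produce an integrable singularity (when $v=0$). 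Substituting the truncated expansion into~\eqref{defg2} and using $\Im(\zeta)=\tfrac{1}{2i}(\zeta-\bar\zeta)$ together with the symmetry $\overline{(\e[v]\e^{2\pi t}-1)^{-1}}=(\e[-v]\e^{2\pi t}-1)^{-1}$ for real $v,t$, Lemma~\ref{lem:B-integral} computes each leading integral as
\begin{equation*}
i\int_0^\infty\!\Big(\tfrac{(-it)^\ell}{\e[v]\e^{2\pi t}-1}-\tfrac{(it)^\ell}{\e[-v]\e^{2\pi t}-1}\Big)\df t=-\tfrac{(-1)^\ell\tilde B_{\ell+1}(v)}{\ell+1}.
\end{equation*}
Writing $\kappa=d/k=(qd/k)/q$ and summing over $g$ then yields the stated coefficient $\Er_{s,\ell}(\lambda)$ for $\ell\geq 1$, the factor $q$ being absorbed into the $O_\eps$-constant since $d$ and $\gamma$ are fixed.

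The only delicate step is the $\ell=0$ term. On the one hand, $\tilde B_1$ vanishes at integer arguments, so only $q-1$ of the $q$ terms in the $g$-sum contribute a nonzero $\tilde B_1$, namely those with $g\bar p\not\equiv s\pmod q$. On the other, the prefactor $\tfrac12\lf(\tfrac{\ang{ps}-\lambda}{q})$ in~\eqref{defg2} supplies precisely the missing $g=\ang{ps}$ contribution, since there $\ang{0}/q=1$ and $B_1(1)=\tfrac12$. Reassembling the two pieces recovers the uniform formula $\sum_{g=1}^q\lf(\tfrac{g-\lambda}{q})B_1(\tfrac{\ang{g\bar p-s}}{q})$ claimed for $\Er_{s,0}(\lambda)$. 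The proof for $\Er^\ast_s$ is identical in spirit; the change of indexing in the formula for $\Er^\ast_{s,\ell}$ merely reflects that the integrand in Remark~\ref{conjex} carries $\lf(\tfrac{q-g+\lambda}{q}\pm\cdot)$ in place of $\lf(\tfrac{g-\lambda}{q}\pm\cdot)$. The hard part is thus not analytic but combinatorial: aligning the $\tilde B_1$ versus $B_1(\ang{\cdot}/q)$ conventions at the boundary value $g=\ang{ps}$.
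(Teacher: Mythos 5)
Your proof is correct and takes essentially the same route as the paper's (which is terse: it records the polynomial growth bound on $\lf^{(\nu)}\bigl(\tfrac{g-\lambda+it}{q}\bigr)$ guaranteed by $\inf_{g\in\Z}\|(g-\lambda)/q\|\gg_\eps 1$, and then simply invokes ``expanding in Taylor series and applying Lemma~\ref{lem:B-integral}''). You spell out the same ingredients more fully, in particular the computation
\begin{equation*}
i\int_0^\infty\Big(\frac{(-it)^\ell}{\e[v]\e^{2\pi t}-1}-\frac{(it)^\ell}{\e[-v]\e^{2\pi t}-1}\Big)\df t=-\frac{(-1)^\ell\tilde B_{\ell+1}(v)}{\ell+1}
\end{equation*}
from Lemma~\ref{lem:B-integral}, the control of the Taylor remainder using the boundedness of $\lf^{(\ell)}$ for $\ell\geq 1$ on the relevant vertical rays, and the reconciliation at $\ell=0$ where the prefactor $\tfrac12\lf\bigl(\tfrac{\ang{ps}-\lambda}{q}\bigr)$ supplies the $g=\ang{ps}$ term lost because $\tilde B_1$ vanishes at integers while $B_1(\ang{\cdot}/q)=B_1(1)=\tfrac12$ there. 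That last point is the only place requiring real care, and you handled it correctly.
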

\begin{proof}
  For $\Re(\lambda)\in(\eps,1-\eps)$ we have~$\inf_{g\in\Z}\|\frac{g-\lambda}q\| \gg_\ee 1$. Thus, since
  \begin{align*}
    \lf'(z) = \pi (\cot(\pi z)+i),\qquad \lf^{(\nu)}(z) = (-1)^{\nu-1}(\nu-1)! \sum_{n\in\Z} \frac{1}{(z-n)^\nu},\qquad (z\notin\Z,\ \nu\geq2), 
  \end{align*}
  we have that $\lf^{(\nu)}(\frac{g-\lambda+it}q)\ll_{q,\nu,\eps} (1+|t|^\eps)$ for all $\nu\geq0$, $t\in\R$. The Lemma then follows immediately by expanding in Taylor series and applying Lemma~\ref{lem:B-integral}.
\end{proof}

\begin{lemma}\label{expg}
  Assume $\Re(\tilde \lambda)\in(0,1)$ and let $s\in\Z/q\Z$. Let $h_{s,\ell}$ be either $\Er_{s,\ell}$ or $\Er^*_{s,\ell}$. Then, for all $\ell,M\geq0$, and all~$\lambda$ in a neighborhood of~$\tilde\lambda$, we have
  \begin{align}\label{exp1}
    h_{s,\ell}(\lambda)&= \sum_{v=0}^M (2\pi i)^{\ell+v} C_{s,\ell,v}(\tilde \lambda)(\lambda-\tilde \lambda)^v+O(|\lambda-\tilde \lambda|^{M+1}),
  \end{align}
  where, for $(v,\ell)=(0,0)$,
  \begin{align}
    C_{0,0}(\tilde \lambda)&= \sum_{g=1}^q f\Big(\frac{g-\tilde\lambda}q \Big)  B_{1}\Big( \frac{\ang{g\bar{p}-s} }q\Big),&& \text{if }h_{s,\ell}=\Er_{s,\ell}\label{exp01}\\
    C_{0,0}(\tilde \lambda)&= \sum_{g=1}^q f\Big(\frac{q-g+\tilde\lambda}q \Big)  B_{1}\Big( \frac{\ang{g\bar{p}-s} }q\Big),&& \text{if }h_{s,\ell}=\Er^*_{s,\ell}.\label{exp02}
  \end{align}
  Moreover, for $(v,\ell)\neq(0,0)$ we have  $C_{s,\ell,v}(\tilde \lambda)\in\Q(\e[\tfrac{1}{q}],\e[\tfrac{\tilde\lambda}{q}])$; also if 
  $$\sigma\in \textnormal{Gal}\big(\Q(\e(\tfrac{1}{q}),\e(\tfrac{\tilde\lambda}{q}))/\Q(\e(\tfrac{1}{q}),\e(\tilde\lambda))\big)$$ 
  is such that $\sigma(\e[\tilde\lambda/q])=\e[\tilde(\lambda+u)/q]$ for some $u\in\{0,\dots,q-1\}$, then
  \begin{align}\label{pdc}
    \sigma(C_{s,\ell,v}(\tilde \lambda))=C_{s-u\overline p,\ell,v}(\tilde \lambda).
  \end{align}
\end{lemma}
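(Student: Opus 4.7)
The plan is a direct Taylor expansion of $h_{s,\ell}$ around $\tilde\lambda$, tracking both the factor $(2\pi i)^{\ell+v}$ and the algebraic nature of the coefficients. The first step is to observe that for $\Re(\tilde\lambda)\in(0,1)$ and $g\in\{1,\dots,q\}$, the arguments $\frac{g-\tilde\lambda}{q}$ and $\frac{1-g+\tilde\lambda}{q}$ all avoid $\Z$, where $f$ is singular. Since $f$ is holomorphic on $\C\setminus\Z$, the function $h_{s,\ell}$ is holomorphic in a neighborhood of $\tilde\lambda$ and admits a Taylor expansion of the form~\eqref{exp1}, so the task reduces to identifying its coefficients.

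The main algebraic input is that for $\nu\geq 1$,
\[ f^{(\nu)}(z) = (2\pi i)^\nu P_\nu(\e[z]), \]
where $P_\nu\in\Z[w,(w-1)^{-1}]$ is defined by $P_1(w)=w/(w-1)$ and $P_{\nu+1}(w)=wP_\nu'(w)$; this follows by induction using $\frac{d}{dz}\e[z]=2\pi i\,\e[z]$. Differentiating the formulas of Lemma~\ref{expg0} $v$ times, evaluating at $\tilde\lambda$, and dividing by $v!(2\pi i)^{\ell+v}$, one obtains, in the $\Er$-case with $\ell\geq 1$,
\[ C_{s,\ell,v}(\tilde\lambda) = \frac{(-1)^{\ell+v}}{q^{\ell+v}v!(\ell+1)!}\sum_{g=1}^q P_{\ell+v}\Big(\e\Big[\tfrac{g-\tilde\lambda}{q}\Big]\Big)\tilde B_{\ell+1}\Big(\tfrac{g\bar p-s}{q}\Big), \]
with the analogous expression involving $\e[(1-g+\tilde\lambda)/q]$ in the starred case, and with $\tilde B_1$ replaced by $B_1(\langle\cdot\rangle/q)$ when $\ell=0$. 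Since $P_\nu$ has rational coefficients, $\tilde B_{\ell+1}$ and $B_1(\langle\cdot\rangle/q)$ take rational values at rational arguments, and $\e[(g-\tilde\lambda)/q]\in\Q(\e[\tfrac1q],\e[\tfrac{\tilde\lambda}{q}])$, this yields the membership claim for $(v,\ell)\neq(0,0)$. The remaining case $(v,\ell)=(0,0)$ reduces to $h_{s,\ell}(\tilde\lambda)$, which is read off directly from Lemma~\ref{expg0} to give~\eqref{exp01}--\eqref{exp02}; note that this case is genuinely excluded from the algebraicity statement because $f$ itself is not a rational function of $\e[z]$.

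The most delicate step is the Galois equivariance~\eqref{pdc}. Applying $\sigma$ to the explicit formula above replaces $\e[(g-\tilde\lambda)/q]$ by $\e[(g-u-\tilde\lambda)/q]$ in each summand. The plan is then to reindex via the bijection $g\mapsto g+u\pmod{q}$ of $\{1,\dots,q\}$, which is legitimate because every factor of the summand depends on $g$ only through its residue mod $q$: this uses the $1$-periodicity of $\tilde B_{\ell+1}$ (and of $B_1(\langle\cdot\rangle/q)$) together with the congruence $(g+u)\bar p - s\equiv g\bar p - (s-u\bar p)\pmod{q}$. After reindexation, the resulting sum is exactly $C_{s-u\bar p,\ell,v}(\tilde\lambda)$, as required; the same argument handles the starred case. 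The only point needing care is that the reindexing interacts correctly with the ``representative in $[1,q]$'' convention of $\langle\cdot\rangle$, but this follows from the $1$-periodicity just invoked, so I expect no further obstacle.
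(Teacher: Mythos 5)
Your proof is correct and follows essentially the same route as the paper: Taylor-expand the formulas from Lemma~\ref{expg0}, observe that $(2\pi i)^{-\nu} \lf^{(\nu)}(z)$ is a rational function of $\e[z]$ with rational coefficients (the paper simply notes $\lf'(z)=\pi(\cot(\pi z)+i)=2\pi i/(1-\e[-z])$ and leaves the rest implicit, whereas you make the inductive structure explicit via $P_\nu$), and deduce the Galois equivariance by the substitution $g\mapsto g+u$ together with the $1$-periodicity of $\tilde B_{\ell+1}$. Your sign and power bookkeeping, $C_{s,\ell,v}(\tilde\lambda)=\frac{(-1)^{\ell+v}}{q^{\ell+v}v!(\ell+1)!}\sum_g P_{\ell+v}(\e[(g-\tilde\lambda)/q])\tilde B_{\ell+1}(\cdot)$, is in fact more careful than the displayed formula in the paper's proof, which has inconsequential typos in the exponent of $2\pi i$ and the power of $-1$; the argument is unaffected.
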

\begin{proof}
  The equations~\eqref{exp1}-\eqref{exp02} follow immediately by Taylor expansion. Moreover, if $\ell\geq1$, $v\geq0$ and $h_{r,\ell}=\Er_{r,\ell}$ then
  \begin{align*}
    C_{r,\ell,v}(\tilde \lambda)=  \frac{(-1)^\ell}{q^\ell (\ell+1)!}  \frac{1}{q^v v!}\sum_{g=1}^q (2 \pi i)^{\ell+v}\lf^{(\ell+v)}\Big(\frac{g-\tilde \lambda}q \Big)   \tilde B_{\ell+1}\Big( \frac{g\bar{p}-r}q\Big),
  \end{align*}
  so that $C_{r,\ell,v}(\tilde \lambda)\in \Q(\e[\tfrac{1}{q}],\e[\tfrac{\tilde\lambda}{q}])$ since $\lf'(z) = \pi (\cot(\pi z)+i)= \frac{2 i}{1-\e[-z]}$ and~\eqref{pdc} follows by the change of variables $g\to g+j$.
  The case $\ell=0$, $v\geq1$ and the analogous property for $\tilde \Er_{r,\ell}$ can be proven in the same way.
\end{proof}

\begin{proof}[Proof of Lemma~\ref{main_terms}]
  As mentioned in the introduction, for the knots under consideration, the asymptotic expansion stated in Lemma~\ref{main_terms} will be essentially reduced to a proof of the asymptotic expansion in the volume conjecture for those knots. Thus, we shall frequently refer to~\cite{Ohtsuki52,OhtsukiYokota6,Ohtsuki7} where this asymptotic expansion was proven for hyperbolic knots with $5,6$ and $7$ crossings. The case of the knot $4_1$ is easier, since there is a dominant critical point on $(0,1)$, and the method of stationary phase can be applied, similarly as in the proof of Theorem~\ref{th:2} below. Thus, we will focus here on the case $K\neq 4_1$. Recall also that we assume~$K\neq 7_2$.

  By Remark~\ref{conjex}, for $0\leq \ell_{i,j}(\bs L)<d$ we can write $J^{*}_K$ as
  \begin{align*}
    \J^{*}_K(x ; \boldsymbol L,\boldsymbol s) =   \ssum{ \br d/k-\bs L\in \mathcal  D,\\  r_i\equiv s_i \mod{q},\ \forall i}
    \exp\Big(\frac{k}{qd}  V_{\bs s,d/k}(d\br/k -\bs L )\Big),
  \end{align*}
  where $\mathcal D=\{\bs n\in [0,1)^m\mid \ell_{i,j}(\bs n)\in[0,1)\, \forall i,j\}$,
  \begin{align*}
    V_{\bs s,\kappa}(\bs n):=\hat V(\bs n)+q\kappa \Big(U_{\bs s,\kappa}(\bs n)+\frac{ \pi i(m_1+m_4-m_2-m_3)}{12q}\kappa\Big)
  \end{align*}
  and
  \begin{align*}
    \hat V(\bs n)&=\sum_{j=1}^{m_1}\Big(\Lie(\ell_{1,j}(\bs n))+\frac{\pi i}{12}\Big) -\sum_{j=1}^{m_2}\Big(\Lie(1-\ell_{2,j}(\bs n))+\frac{\pi i}{12}\Big) \\
    &\quad-\sum_{j=1}^{m_3}\Big(\Lie(\ell_{3,j}(\bs n))+\frac{\pi i}{12}\Big)+\sum_{j=1}^{m_4}\Big(\Lie(1-\ell_{4,j}(\bs n))+\frac{\pi i}{12}\Big),\\
    U_{\bs s,\kappa}(\bs n)&= \sum_{j=1}^{m_1}\Er_{\bs s}(\ell_{1,j}(\bs n),\kappa)+\sum_{j=1}^{m_2}\Er_{\bs s}^*(\ell_{2,j}(\bs n),\kappa)-\sum_{j=1}^{m_3}\Er_{\bs s}(\ell_{3,j}(\bs n),\kappa)-\sum_{j=1}^{m_4}\Er_{\bs s}^*(\ell_{4,j}(\bs n),\kappa).
  \end{align*}
  The function $\hat V(\bs n)$ coincides, up to conjugation, with the limiting value of the potential function of the hyperbolic structure of the knot complement given in~\cite[(10)]{Ohtsuki52},~\cite[p.297, p.308, p.322]{OhtsukiYokota6} and~\cite[p.12, p.26, p.38, p.50, p.63]{Ohtsuki7}. We remark that the expressions for $\hat V$ given there differ from the one we have here, however it is easy to see that the two expressions actually coincide upon using the dilogarithm identity (or the formula \eqref{ff1})
  $$
  \Lie(1-\lambda)+\Lie(\lambda)=-\pi i B_2(\lambda).
  $$
  In~\cite[Lemma 2.1]{Ohtsuki52},~\cite[\S3.2, \S4.2, \S5.2]{OhtsukiYokota6} and~\cite[\S3.2, \S4.2, \S5.2, \S6.2, \S7.2]{Ohtsuki7} it was shown that for all knots under consideration $\Re (\hat V)$ is smaller than $\frac{\Vol(K)}{2\pi}$ on the boundary of $\mathcal D$. More precisely, there exists a domain $\mathcal D'\subset\mathcal D$ with $\tn{dist}(\mathcal D',\partial \mathcal D)>0$ such that $\Re(\hat V(\bs n))<\frac{\Vol(K)}{2\pi}-\delta'$ for all $\bs n\in \mathcal{D}\setminus\mathcal D'$ and some $\delta'>0$. Thus, by~\eqref{bdfg}, we have
  \begin{align*}
    \J^{*}_K(\gamma, x;\boldsymbol L,\boldsymbol s) =   \ssum{ \br d/k-\bs L\in \mathcal  D',\\  r_i\equiv s_i \mod{q},\ \forall i}
    \exp\Big(\frac{k}{qd}  V_{\bs s,d/k}(d\br/k -\bs L )\Big)+O\bigg(k^m\exp\Big(\Big(\frac{\Vol(K)}{2\pi}-\delta'\Big)\frac{k}{qd}\Big)\bigg).
  \end{align*}
  We now apply Poisson summation formula in the form of~\cite[Proposition 4.6]{Ohtsuki52} (with $k/d$ playing the role of $N$ of~\cite{Ohtsuki52}). Note that our sum are restricted to arithmetic progressions modulo $q$; since $q$ is fixed, this does not affect the argument.
  By~\cite[Lemma 5.1]{Ohtsuki52},~\cite[Lemma 3.4, 4.3 and 5.2]{OhtsukiYokota6} and~\cite[Lemma 3.2, 4.2, 5.2, 6.2 and 7.2]{Ohtsuki7} we have that
  $\hat V(\bs n)-\frac{\Vol(K)}{2\pi }$ satisfies the conditions (41)-(42) of~\cite[Proposition 4.6]{Ohtsuki52} and by~\cite[Remark 4.8]{Ohtsuki52} and Lemma~\ref{expg0} we can apply Proposition 4.6 of \cite{Ohtsuki52} to $ V_{\bs s,\bs L,d/k}(\bs n)$ rather than $\hat V(\bs n)$. We find
  \begin{align}\label{intf}
    \J^{*}_K(\gamma, x; \boldsymbol L,\boldsymbol s) = \Big(\frac k{qd}\Big)^m   \iint_{\mathcal D'}
    \exp\Big(\frac{k}{qd}  V_{\bs s,d/k}(\bs z)\Big)\df \bs z+O\bigg(\exp\Big(\Big(\frac{\Vol(K)}{2\pi}-\delta''\Big)\frac{k}{qd}\Big)\bigg),
  \end{align}
  for some $\delta''>0$, where the extra factor $q^{-m}$ comes from the restriction to the congruence classes.
  One can then apply the saddle point method in the form of~\cite[Proposition~3.5 and Remark~3.6]{Ohtsuki52} as done in~\cite[p.705-706 and \S5.2]{Ohtsuki52} (cf. also~\cite{Yokota2011}),~\cite[p.297 and \S3.5; p.309 and \S4.5; p.47 and \S5.5]{OhtsukiYokota6} and~\cite[p.13 and \S3.5; p.26 and \S4.5; p.39 and \S5.5; p.50 and \S6.5; p.64 and \S7.5]{Ohtsuki7}. Notice that both $ V_{\bs s,d/k}(\bs z)$ and the corresponding functions studied in these papers converge uniformly to $\hat V$, so the same computations apply. We then find that for all $N\geq1$ the first summand on the right of~\eqref{intf} is equal to
  \begin{align*}
    \Big(\frac{2\pi k}{ qd}\Big)^{m/2}\frac1{\det(-\Hess)^{1/2}}
    \exp\Big(\frac{k}{qd} \hat  V(\bs \mu)+C(\bs s)\Big)\bigg(1+\sum_{n=1}^N\omega_{\bs s,n}\big(\frac{2\pi i qd}{k}\Big)^n+O\Big(\frac{1}{k^{N+1}}\Big)\bigg)
  \end{align*}
  where $\bs\mu$ is a critical point of $\hat V(\bs z)$ (and thus satisfies~\eqref{spe}) with $0<\Re(\ell_{i,j}(\bs\mu))<1$ for all~$i,j$, such that $\hat V(\bs \mu)=\frac{\Vol(K)-i\cs(K)}{2\pi }$  
  and $\Hess$ is the Hesse matrix of $\hat V$ at $\bm\mu$. In particular, $\det(-\Hess)\in (\pi i)^m F_{K}$. By Lemma~\ref{expg} the coefficients $\omega_{\bs s,n}$ are in ${\tilde F}_{K, q}$ for all $n\geq1$ and if $\sigma$ is as in~\eqref{galau}, then $\sigma(\omega_{\bs s,n})=\omega_{\bs s-\overline p\bs u,n}$. Finally, by~\eqref{exp01}-\eqref{exp02} we have that $C(\bs s)$ is as in~\eqref{defcs}.
\end{proof}

\section{Proof of Theorems~\ref{th:2}}\label{pmt2}
We proceed in a similar way as in the proof of Theorem~\ref{th:1}. For $x=\bar h/k$, with $(h,k)=1$ and $1\leq h<k$, we write $ \J_{4_1}( x)$ as in~\eqref{exja} and we divide the sum into congruence classes modulo $h$, that is we write
\begin{align}\label{splj2}
  \J_{4_1}(\bar h/k) = k^{\frac{3-m}2} \sum_{0\leq s\leq h-1}\J_{4_1}(\bar h/k; s)
\end{align}
where, for $0\leq s\leq h-1$, we write
\begin{align}\label{splik2}
  \J_{4_1}(\bar h/k; s) =   \ssum{0\leq r<k,\\ r\equiv s \mod{h}}{| [ \bar h/k]_{r}|^2}.
\end{align}
with $[\cdot]_r$ as in~\eqref{sb}. We apply Theorem~\ref{thp}, which for $0\leq s<h$, $s\equiv r\mod h$, gives
\begin{align}\label{arf2}
  | [\overline{h}/k]_{r}|^2 = | [\bar k/h]_{s}|^2 \Phi_s^\dagger(r /k)\exp\Big(- \frac {2\pi r}{hk}c_0({\overline k}/h)+\Er^\dagger_s(r/k)\Big)
\end{align}
for some $\Er^\dagger_s(r/k)$ satisfying $|\Er^\dagger_s(r/k)|\ll E(h,k)$ for all $0\leq r<k$, where
\begin{align*}
  E(h,k):= 1+ \log \frac kh+\frac{k}{h^2}+\max_{r'=0,\dots, h-1}\Big|\ssum{1\leq n\leq r'}\cot\Big(\pi\frac{n\overline k }h\Big)\frac{ n}{hk}\Big|, \end{align*}
(note that if~$0\leq r_0 <h$ with~$r_0\equiv r \mod{q}$, then~$\frac{r}k \in[\frac{r_0}k, 1-\frac hk(1-\{\frac{r_0-k}h\})]$), and where for $\lambda\in[0,1]$, we define
\begin{align*} 
  \Phi_s^\dagger(\lambda) &= \exp\bigg(2\frac k{h}\Re \Big(\Lie(\lambda) \Big)\bigg)=\exp\bigg(\frac k{h}\int_0^\lambda\log(4\sin(\pi t)^2)\,dt\bigg),
\end{align*}
by~\eqref{ff3} and~\eqref{dig}.
By positivity, it follows that
\begin{align}\label{erfac}
  \J_{4_1}(\bar h/k; s)= \exp\big(O(E(h,k)+|c_0({\overline k}/h)|/h)\big)|[ \bar k/h]_{ s}|^2
  \ssum{0\leq r<k\\ r\equiv s \mod{h}}{ \Phi_s^\dagger(r/k)}.
\end{align}
Now, the function $\lambda\to \int_0^\lambda\log(4\sin(\pi t)^2)\,dt$ is continuous on $[0,1]$ and it has a unique maximum in this interval, located at $\lambda=5/6$. Moreover, it can be expanded in a neighborhood of this point as
\begin{align}\label{apin}
  \int_0^\lambda\log(4\sin(\pi t)^2)\df t=\frac{\Vol (4_1)}{2\pi}-\pi\sqrt 3(\lambda-\tfrac56)^2+O(|\lambda-\tfrac56|^3).
\end{align}
since $\int_0^{5/6}\log(4\sin(\pi t)^2)\df t=\frac{\Vol (4_1)}{2\pi}$.
It follows that 
\begin{align}\label{eastp}
  \ssum{0\leq r<k\\ r\equiv s \mod{h}} \Phi_s^\dagger(r/k)=\sqrt{\frac{k}{h\sqrt 3}}\exp\Big(\frac{\Vol (4_1)}{2\pi}\frac kh\Big)(1+O(h/k))=\exp\Big(\frac{\Vol (4_1)}{2\pi}\frac kh+O(\log (1+k/h))\Big),
\end{align}
where the first step is justified in a standard way~\cite[p.517]{FlajoletSedgewick2009}, e.g. by smoothly restricting the sum to the terms where $r/k$ is in a neighborhood of $5/6$, using~\eqref{apin} and applying the Poisson summation formula. The second step instead follows immediately by positivity.

By~\eqref{splj2},~\eqref{erfac} and~\eqref{eastp} we then find
\begin{align}\label{exts}
  \J_{4_1}(\bar h/k)=  \J_{4_1}(\bar k/h)\exp(O(E(h,k)+|c_0({\overline k}/h)|/h)),
\end{align}
as desired.

With a similar argument we can prove the following theorem, which deals with the case where the dominating term on the right hand side of~\eqref{arf2} is $\exp(- \frac {2\pi r}{hk}c_0({\overline k}/h))$.

\begin{theo}\label{th:4}
  Let $1\leq h\leq k$ with $(h,k)=1$ and assume $\cz({\overline k}/h)<0$. Then,
  \begin{equation}\label{eq:srf}
    \begin{split}
      \log |\J_{4_1,0}\big(\e[{\overline{h}}/k]\big)|&=\log |\J_{4_1,0}\big(\e[{\overline k}/h]\big)|- \frac{2\pi}{h} \cz\big({\overline k}/h\big)+O\Big(\frac k h+\max_{0\leq r' < h}\Big|\ssum{1\leq n\leq r'}\cot\Big(\pi\frac{n\overline k }h\Big)\frac{ n}{hk}\Big|\Big).
    \end{split}
  \end{equation}
\end{theo}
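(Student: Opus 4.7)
The strategy parallels Theorem~\ref{th:2}, but now we track the cotangent--sum contribution $-\frac{2\pi}{h}\cz(\bar k/h)$ as a \emph{main term} rather than absorbing it into the error. Set $C := -\cz(\bar k/h) > 0$, which is positive by hypothesis. Following~\eqref{splj2}--\eqref{splik2}, we write
\[
\J_{4_1}(\bar h/k) = k^{-1/2}\sum_{0 \leq s < h}\J_{4_1}(\bar h/k; s),
\]
and by~\eqref{arf2} (a direct application of Theorem~\ref{thp}), for each $0 \leq r < k$ with $r \equiv s \bmod h$,
\[
|[\bar h/k]_r|^2 = |[\bar k/h]_s|^2\,\Phi_s^\dagger(r/k)\exp\Big(\tfrac{2\pi r C}{hk} + \Er_s^\dagger(r/k)\Big),\qquad |\Er_s^\dagger|\ll E(h,k).
\]
Thus, by positivity, we are reduced to estimating
\[
\sum_{\substack{0\leq r < k\\ r \equiv s\bmod h}} \exp\big(E(r/k)\big),\qquad E(\lambda) := \tfrac{k}{h}\int_0^\lambda \log\big(4\sin^2(\pi t)\big)\df t + \tfrac{2\pi C\lambda}{h},
\]
uniformly in $s$, and comparing with the expression~$\J_{4_1}(\bar k/h)=h^{-1/2}\sum_s |[\bar k/h]_s|^2$.

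The function~$E$ is concave on~$[1/2, 1]$ (since $E''(\lambda) = \tfrac{2\pi k}{h}\cot(\pi\lambda)$), with a unique critical point $\lambda^*\in(1/2,1)$ solving $2\sin(\pi\lambda^*)=e^{-\pi C/k}$. Using $\int_0^1 \log(4\sin^2(\pi t))\df t = 0$ and a direct expansion of the integral near $t=1$ with $\delta^* := 1-\lambda^* \asymp e^{-\pi C/k}$, one verifies
\[
E(\lambda^*) = \frac{2\pi C}{h} + O\!\left(\frac{k}{h}\right).
\]
For the \emph{upper bound}, since $E$ attains its global maximum at $\lambda^*$ and the sum has at most $k/h+1$ terms, one gets $\sum \exp(E(r/k)) \leq (k/h+1)\exp(E(\lambda^*))$.

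For the \emph{lower bound}, one picks a single convenient term. If $\lambda^*\leq 1-h/k$ (moderate $C$), choose $r^*_s\equiv s\bmod h$ with $|r^*_s/k - \lambda^*|\leq h/k$; computing $|E''(\xi)|(h/k)^2$ and noting that this quantity is $O(1)$ precisely in this regime gives $E(r^*_s/k) = E(\lambda^*) + O(1)$. Otherwise $\lambda^* > 1-h/k$, in which case one picks $r_s$ the largest $r<k$ with $r\equiv s\bmod h$, so that $\delta_s := 1-r_s/k \in (0,h/k]$; the integrand $\log(4\sin^2(\pi t))$ being non-positive on $(0,1/6)$ yields the crude but sufficient
\[
E(r_s/k) \geq \frac{2\pi C(1-\delta_s)}{h} \geq \frac{2\pi C}{h} - \frac{2\pi C}{k}.
\]
The residual error $C/k$ is absorbed in the claimed error term, since taking $r'=h-1$ in the $\max$ gives $|\cz(\bar k/h)|/k = C/k \leq \max_{0\leq r' < h}|\ssum{1\leq n\leq r'}\cot(\pi n\bar k/h)\frac{n}{hk}|$.

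Assembling, using $h^{1/2}/k^{1/2}$ from the normalization and that $E(h,k) \ll k/h + \max_{r'}|\sum\cdots|$, one obtains~\eqref{eq:srf}. The delicate step is the lower bound in the regime where $\lambda^*$ is within distance $h/k$ of the boundary $\lambda=1$: the quadratic approximation around~$\lambda^*$ breaks down since $|E''(\lambda^*)|\asymp \frac{k}{h}e^{\pi C/k}$ can be arbitrarily large, and one must instead read the bound off directly from a boundary sample point, controlling the deficit by the partial cotangent-sum error. (Throughout, one may assume $h\leq k/12$, since otherwise $k/h=O(1)$ makes~\eqref{eq:srf} trivial.)
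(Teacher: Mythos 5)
Your approach is correct in spirit and, modulo a small technicality noted below, closes the argument, but it is noticeably more involved than the paper's. The paper proceeds by bounding~$\Phi_s^\dagger(\lambda)$ \emph{trivially} (its exponent~$\tfrac kh\int_0^\lambda\log(4\sin^2\pi t)\,\df t$ has absolute value~$\leq \tfrac kh\cdot2\Lambda(\tfrac16) = O(k/h)$, which is already within the allowed error), absorbing it into the~$O(k/h)$, and then estimating the remaining purely geometric-type sum~$\sum_{r\equiv s}\exp(\tfrac{2\pi rC}{hk})$ by the elementary observation that for~$x\geq 0$ one has~$\sum_{0\leq n\leq m}\e^{nx}=\e^{mx+O(m)}$; the dominant term is the one with the largest~$r'\equiv s\bmod h$, $r'<k$, giving~$\tfrac{2\pi r'C}{hk} = \tfrac{2\pi C}{h}+O(C/k)$. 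You instead keep~$\Phi_s^\dagger$ inside the exponential and run a saddle-point-type analysis of the combined exponent~$E(\lambda)$, locating its critical point~$\lambda^*$ with~$2\sin(\pi\lambda^*)=\e^{-\pi C/k}$ and showing~$E(\lambda^*)=\tfrac{2\pi C}{h}+O(k/h)$. Both routes work, but the paper's avoids the case analysis on whether~$\lambda^*$ is close to~$1$, since the~$\Phi_s^\dagger$ contribution is dismissed before the sum is ever examined. Your approach buys a more explicit picture of where the mass sits, at the cost of extra analysis of~$E$.

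One small gap in your Case~1 lower bound: you pick~$r^*_s\equiv s\bmod h$ with~$|r_s^*/k-\lambda^*|\leq h/k$ and Taylor-expand~$E$ around~$\lambda^*$, claiming~$|E''(\xi)|(h/k)^2 = O(1)$ because you are in the regime~$\delta^*:=1-\lambda^*\geq h/k$. However, if~$r_s^*/k$ falls to the \emph{right} of~$\lambda^*$, the intermediate point~$\xi$ can be as close to~$1$ as~$\lambda^*+h/k$, and in the borderline case~$\delta^*=h/k$ one has~$|\cot(\pi\xi)|$ of size up to~$k$, so~$|E''(\xi)|(h/k)^2$ can be of order~$h$ rather than~$O(1)$. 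The fix is easy — take~$r_s^*$ to be the largest admissible integer~$\leq\lambda^* k$, so that~$\xi\in(\lambda^*-h/k,\lambda^*)$, and then monotonicity of~$\cot$ on~$(\tfrac12,1)$ gives~$|E''(\xi)|\leq|E''(\lambda^*)|\asymp \tfrac kh\cdot\tfrac1{\delta^*}\leq\tfrac{k^2}{h^2}$ — but the choice as you wrote it is not quite safe.
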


To prove~\eqref{eq:srf} first we observe we can assume $k>2h$ since otherwise the result is trivial. Also, we observe that, bounding trivially $\Phi_s^\dagger$, one can write~\eqref{arf2} as 
\begin{align*}
  | [\overline{h}/k]_{r}|^2 = | [\bar k/h]_{s}|^2 \exp\Big(- \frac {2\pi r}{hk}c_0({\overline k}/h)+O(E(h,k)+k/h)\Big)
\end{align*}
and so
\begin{align*}
  \J_{4_1}(\bar h/k; s)= \exp\big(O(E(h,k)+k/h)\big) |[ \bar k/h]_{ s}|^2
  \ssum{0\leq r<k\\ r\equiv s \mod{h}}\exp\Big(- \frac {2\pi r}{hk}c_0({\overline k}/h)\Big).
\end{align*}
Now, if $x\geq0$, we have $\sum_{0\leq n\leq m}\e^{nx}=\e^{mx+O(m)}$, and thus if $x$ is large the sum is roughly dominated by the last term. Then for $c_0({\overline k}/h)<0$, we have 
\begin{align*}
  \ssum{0\leq r<k\\ r\equiv s \mod{h}}\exp\Big(- \frac {2\pi r}{hk}c_0({\overline k}/h)\Big)=\exp\Big(- \frac {2\pi r'}{hk}c_0({\overline k}/h)+O(k/h)\Big)
\end{align*}
where $r'$ is the maximum integer satisfying $0\leq r'< k$ with $r'\equiv s\mod h$. Then, $k-h\leq r'<k$ and so in particular 
$\frac{r'}{hk}c_0({\overline k}/h)=\frac{1}{h}c_0({\overline k}/h)+O(\frac{1}{k}|c_0({\overline k}/h)|)$ and the result follows.

\section{Proof of Theorem~\ref{theo:cfep}}\label{fise}

Before starting, we state some basic properties of continued fractions (see~\cite{Kin} for a reference).  Given $h/k\in\Q\cap(0,1)$ with $h,k\in\N$, $(h,k)=1$, we denote by $[0;b_1,\dots,b_r]$ the continued fraction expansion of $h/k$. Then for $0\leq s\leq r$ the  convergents of $h/k$ are the fractions $[0;b_1,\dots,b_s]=\frac{u_s}{v_s}$ with $(u_s,v_s)=1$ (as usual $u_0/v_0=0/1$, $v_{-1}:=0$, $u_{-1}:=1$); the $v_s$ are called the partial quotients. The partial quotients satisfy the bounds $v_s\ll 2^{-s/2}$, $v_{r-s}\ll  k2^{-s/2}$ 
for $0\leq s\leq r$, and  $v_{s}/v_{s-1}\leq b_{s}+1$ for $1\leq s\leq r$. Also, $r\ll \log k$. For all $1\leq s\leq r$ we have $v_su_{s-1}-v_{s-1}u_s=(-1)^{s}$ and so $\frac{\overline{v_{s-1}}}{v_s}\equiv (-1)^{s+1}\frac{u_s}{v_s}\mod 1$.
Moreover, if $1\leq h'\leq k$ is such that $h'\equiv (-1)^{r+1}\overline h\mod k$, then the Euclid algorithm on $h'$ and $k$ can be written as
\begin{equation}\label{eq:aaa}
  \begin{aligned}
    &v_r=k,\qquad v_{r-1}= h',\\
    &v_{\ell+1}=b_{\ell+1}v_{\ell}+v_{\ell-1},\qquad \ell=0,\dots, r-1.\\
  \end{aligned}
\end{equation}

The following technical result, proved in~\cite{BettinDrappeaub}, will be needed in the proof of Theorem~\ref{theo:cfep}.

\begin{lemma}[{\cite[Theorem~1]{BettinDrappeaub}}]\label{parcot}
  Let $1\leq h<k$ with $(h,k)=1$. Let $v_0,\dots,v_r$ be the partial quotients of $h/k$. Then uniformly for $1\leq c\leq k$, we have
  \begin{align*}
    &\ssum{1\leq n\leq c}\cot\Big(\pi\frac{n h }k\Big)\frac{ n}{k}\ll \sum_{m=0}^{r-1} v_m\log(v_{m+1}/v_{m})+k.
  \end{align*}
\end{lemma}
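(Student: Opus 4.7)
The strategy combines Abel summation with the continued-fraction structure of $h/k$. First I would apply Abel summation:
\[
\ssum{1\leq n\leq c}\cot\!\Big(\pi\frac{nh}{k}\Big)\frac{n}{k} = \frac{c}{k}T(c) - \frac{1}{k}\sum_{1\leq n<c} T(n), \qquad T(n):=\sum_{1\leq m\leq n}\cot(\pi mh/k),
\]
reducing the problem to a uniform bound $T(n)\ll \sum_m v_m\log(v_{m+1}/v_m)+k$. The trivial bound $|T(n)|\ll k\log k$ coming from $|\cot(\pi x)|\ll 1/\|x\|$ is too weak, and the arithmetic structure of $h/k$ must be exploited.

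To bound $T(n)$ I would use Ostrowski numeration, writing $n=\sum_{m=0}^{r-1}a_m v_m$ with admissible digits $0\leq a_0<b_1$, $0\leq a_m\leq b_{m+1}$, subject to the standard Markov--Ostrowski rule. This decomposes $[1,n]\cap\N$ into $\sum_m a_m$ complete blocks of lengths $v_s$ (varying with the level). Since $v_sh\equiv\pm v_{r-s-1}\pmod{k}$, on a block of length $v_s$ the residues $jh\bmod k$ form a permutation of an arithmetic progression modulo $k$ of common difference $\approx k/v_s$, up to a perturbation of size $O(1/(v_sv_{s+1}))$. The cotangent sum over such a block reduces, modulo a controlled perturbation error, to a cotangent sum at the smaller denominator $v_s$, via the multiplication formula $\sum_{j=0}^{N-1}\cot(\pi(x+j/N))=N\cot(\pi Nx)$. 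Iterating the reduction through the continued-fraction expansion, each level contributes an error of order $v_m\log(v_{m+1}/v_m)$, coming from the harmonic sum $\sum_{j=1}^{b_{m+1}}1/j\ll\log(v_{m+1}/v_m)$ accumulated from the indices closest to the pole of $\cot$. The $O(k)$ term absorbs the cumulative boundary weights $n/k$ in the Abel-summed identity.

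The principal obstacle will be handling the perturbation of the arithmetic progression, since $\cot$ has non-integrable singularities and a naive Taylor expansion breaks down near them. I would split each block into a \emph{bulk} regime, where $\|jh/k\|$ comfortably dominates the perturbation $\eps$ and a first-order expansion $\cot(\pi(x+\eps))=\cot(\pi x)-\pi\eps\csc^2(\pi x)+O(\eps^2\csc^3)$ applies with summable error, and an \emph{exceptional} regime with only $O(1)$ indices per block located near the pole, which must be handled directly by $|\cot(\pi jh/k)|\ll 1/\|jh/k\|\ll v_{s+1}/j$ and summed. A secondary technical point is that the Ostrowski admissibility rule must be respected throughout the recursion to avoid overcounting, which forces the blocks to be processed from the highest level downward rather than inductively on $r$.
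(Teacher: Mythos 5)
This lemma is cited from an external reference ([BettinDrappeaub, Theorem~1]) and is not proved in the present paper, so there is no ``paper's own proof'' to compare against. Your proposal can nevertheless be assessed on its own terms, and it has a fatal gap at the very first step: the Abel--summation reduction.

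You propose to reduce the problem to the uniform bound $T(n)\ll \sum_{m}v_m\log(v_{m+1}/v_m)+k$, where $T(n)=\sum_{1\leq m\leq n}\cot(\pi mh/k)$. This bound is false. Take $h=1$, so that $h/k=[0;k]$, $r=1$, $v_0=1$, $v_1=k$, and the claimed right-hand side is $\ll k$. However, for $n\asymp k/2$,
\begin{align*}
T(n)=\sum_{m=1}^{n}\cot\!\Big(\frac{\pi m}{k}\Big)=\sum_{m=1}^{n}\frac{k}{\pi m}\big(1+O((m/k)^2)\big)=\frac{k}{\pi}\log n+O(k)\asymp k\log k,
\end{align*}
which exceeds the claimed bound by a factor $\log k$. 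The lemma is nonetheless true in this case: the weighted sum $\sum_{n\leq c}\frac nk\cot(\pi n/k)$ is $O(k)$, essentially because $\frac nk\cot(\pi n/k)$ is bounded near $n=0$ and $\int_0^{1/2} t\cot(\pi t)\,\df t$ converges. The point is that in the Abel-summed identity
\begin{align*}
\sum_{1\leq n\leq c}\cot\!\Big(\frac{\pi nh}{k}\Big)\frac nk=\frac ck T(c)-\frac1k\sum_{1\leq n<c}T(n),
\end{align*}
the two terms on the right are each of size $\asymp k\log k$ and cancel to main order; you cannot afford to bound them separately. Estimating $|T(n)|$ uniformly and invoking the triangle inequality destroys exactly the cancellation you need.

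The same example exposes a second problem inside the Ostrowski step. For $h=1$ the Ostrowski digits of $n$ are just $n=a_0v_0$ with $v_0=1$, so every integer in $[1,n]$ is its own block, and your ``exceptional'' bound $|\cot(\pi j h/k)|\ll v_{s+1}/j=k/j$, summed over all blocks, returns precisely the harmonic estimate $k\log n$ rather than $k$. So the ``$O(1)$ near-pole indices per block'' accounting, as stated, cannot yield the claimed size for $T(n)$. A correct argument must be set up directly for the \emph{weighted} partial sums $\sum_{n\leq c}\frac nk\cot(\pi nh/k)$, keeping the weight $n/k$ throughout so as to preserve the intrinsic cancellation, and then propagate a reciprocity/recurrence through the continued-fraction expansion (in the spirit of the three-term relations for Dedekind sums, or of the quantum modularity of $\cz$). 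Passing first to the unweighted $T$ as you do is a genuine loss of information and not merely a presentational shortcut.
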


\begin{proof}[Proof of Theorem~\ref{theo:cfep}]
  Since $J$ is even, applying repeatedly the reciprocity formula~\eqref{eq:frf} and the recurrence relation~\eqref{eq:aaa}, we see that for all $0\leq s\leq r$
  \begin{align*}
    \log \J_{4_1,0}(\e[ { h}/k]) = {}& \log \J_{4_1,0}(\e[ \overline{ v_{r-1}}/v_r]) = C\sum_{\ell=s+1}^{r}\bigg(\frac{v_{\ell}}{v_{\ell-1}}+E({v_{\ell-1}}/{v_{\ell}})\bigg)+    \log \J_{4_1,0}(\e[ \overline{ v_{s-1}}/v_{s}]),
  \end{align*}
  with $C=\frac{\Vol(4_1)}{2\pi}$ and $E$ satisfying~\eqref{eq:ber}.    Now, by Lemma~\ref{parcot}
  \begin{align*}
    \sum_{\ell=1}^{r}\max_{r'=0,\dots,v_{\ell-1}-1}\Big|\ssum{1\leq n\leq r'}\cot\Big(\pi\frac{n\overline v_{\ell-2} }{v_{\ell-1}}\Big)\frac{ n}{v_{\ell-1}^2}\Big|\ll
    \sum_{\ell=1}^{r} \sum_{m=0}^{\ell-2} \frac{v_m}{v_{\ell-1}}\log(v_{m+1}/v_{m})+\sum_{\ell=1}^{r}1/v_{\ell-1},
  \end{align*}
  since $\tfrac{\overline v_{\ell-2} }{v_{\ell-1}}=(-1)^{\ell}\tfrac{u_{\ell-1} }{v_{\ell-1}}\mod 1$ and so its partial quotients are $v_0\dots,v_{\ell-1}$. The second sum is $O(1)$, whereas changing the order of summation and using $v_{\ell-1-n}\ll  v_{\ell-1}2^{-n/2}$, for $n\geq0$,
  we obtain
  \begin{align*}
    \sum_{\ell=1}^{r} \sum_{m=0}^{\ell-2} \frac{v_m}{v_{\ell-1}}\log(v_{m+1}/v_{m})\ll  \sum_{m=0}^{r-2} \sum_{\ell=m+2}^{r}  2^{(m-\ell)/2}\log(v_{m+1}/v_{m})\ll\sum_{m=0}^{r-2} \log(v_{m+1}/v_{m}).
  \end{align*}
  Now, we fix an $\eps>1/k$ and we take $s$ to be the least integer in $\{1,\dots,r\}$ such that  $v_{s}\geq1/\eps$ and notice that, since $v_m\gg 2^{m/2}$, we have $s=O_\eps(1)$. Then, the above computations and~\eqref{eq:ber} give
  \begin{align*}
    \log \J_{4_1,0}(\e[ { h}/k])= \sum_{\ell=s+1}^{r}\frac{v_{\ell}}{v_{\ell-1}}(C+O(\sqrt{\eps}))+\sum_{\ell=1}^{r} O(\log(v_{\ell}/v_{\ell-1}))+    \log \J_{4_1,0}(\e[\pm{ u_{s}}/v_{s}]).
  \end{align*}
  since $\tfrac{\overline v_{s-1} }{v_{s}}=\pm\tfrac{u_{s} }{v_{s}}\mod 1$ for $\pm1:=(-1)^{s-1} $. Now, if $s\geq1$, we have $\pm u_s/v_s=\gamma (b_s)$, where 
  $\gamma:=(\begin{smallmatrix}\pm u_{s-1} & u_{s-2}\\ \pm v_{s-1} & v_{s-2}\end{smallmatrix})\in\SL(2,\Z)$. Notice that by definition of $s$ all entries of $\gamma$ are bounded by $1/\eps$.
  Thus, by Theorem~\ref{th:1}, we have
  \begin{align*}
    \log \J_{4_1,0}(\e[\pm{ u_{s}}/v_{s}]) = Cb_s+O(\log b_s)+O_\eps(1)=C\sum_{\ell=1}^s b_\ell+O(\log b_s)+O_\eps(1),
  \end{align*}
  since $s,v_{\ell-1}=O_\eps(1)$ and so also $b_\ell=O_{\eps}(1)$ for all $\ell\leq s-1$. Then, since $v_{\ell}/v_{\ell-1}=b_\ell+O(1)$ we find
  \begin{align*}
    \log \J_{4_1,0}(\e[ { h}/k])=(C+O(\sqrt\eps)) \sum_{\ell=1}^{r}(b_\ell+ O(\log b_\ell))+O(r)+O_\eps(1).
  \end{align*}
  Finally, we observe that $b_\ell+ O(\log b_\ell)=b_\ell(1+O(\sqrt\eps))+O(1/\eps)$. Thus,
  \begin{align*}
    \log \J_{4_1,0}(\e[ { h}/k])=(C+O(\sqrt\eps)) \sum_{\ell=1}^{r}b_\ell+O_\eps(r)= \Sm(h/k) \bigg(C +O(\sqrt\eps)+O_\eps\bigg(\frac{r(h/k)}{\Sm(h/k)}\bigg)\bigg).
  \end{align*}
  By hypothesis $r(h/k)/\Sm(h/k)\to0$ and so~\eqref{eq:cfef} follows by letting $\eps\to0^+$ sufficiently slowly.
Equation~\eqref{eq:asy} then follows immediately from~\cite[Corollary~1.4]{BettinDrappeau}.
\end{proof}

\begin{proof}[Proof of Theorem~\ref{th:pz}]
  We extend $\gh_{K}$ to a function on $\R_{>0}$ by setting $\gh_{K}(x):=\lim_{y\to x, y\in\Q}\gh_{K}(y)$ for all $x\not\in\R_{>0}\setminus \Q$. 
  By hypothesis $\gh_{K}(x)$ is well defined and it is easy to prove that $\gh_{K}$ is continuous on $\R\setminus\Q$. 
  Then, setting $\psi(x):=\gh_{K}(x)-\frac{\tn{Vol} (K)}{2\pi}\frac{1}x-\frac{3}2\log (1/x)$, for $x>0$, and $\psi(0)=0$, we have that $\psi(x)$ is bounded and continuous almost everywhere on $[0,1]$. Thus, by Lebesgue's integrability condition, $\psi$ is Riemann-integrable on $[0,1]$. In particular, for all $\eps>0$  there exist a differentiable function $\psi_\eps:[0,1]\to\R$ such that $\|\psi-\psi_\eps\|_{\infty,[0,1]}\leq\eps$.

  By definition $\log \J_{K,0}(\e[h/k])-\log \J_{4_1,0}(\e[k/h])=\psi(h/k)+\frac{\tn{Vol} (K)}{2\pi}\frac{k}h+\frac{3}2\log (k/h)$. Thus, proceeding as in the proof of Theorem~\ref{theo:cfep} using this formula instead of Theorem~\ref{th:2}, we obtain
  \begin{align*}
    \log \J_{4_1,0}(\e[ {\overline h}/k]) 
    &    = \sum_{\ell=1}^{r}\bigg(\frac{\tn{Vol} (K)}{2\pi}\frac{v_{\ell}}{v_{\ell-1}}+\frac{3}2\log ({v_{\ell}}/{v_{\ell-1}}) +
    \psi({v_{\ell-1}}/{v_{\ell}})\bigg)=\phi_\eps(h/k)+O(\eps\log k),\\
  \end{align*}
  where
  \begin{align*}
    \phi_\eps(h/k) &  :  = \sum_{\ell=1}^{r}\bigg(\frac{\tn{Vol} (K)}{2\pi}\frac{v_{\ell}}{v_{\ell-1}}+\frac{3}2\log ({v_{\ell}}/{v_{\ell-1}}) +
    \psi_\eps({v_{\ell-1}}/{v_{\ell}})\bigg).
  \end{align*}
  Letting~$T(x) = \{1/x\}$ for~$x\in(0, 1]$, we note that for~$2\leq s \leq r$, we have~$\frac{v_{s-1}}{{v_s}} = T^{r-s}(h'/k)$, whereas $\frac{v_{0}}{{v_1}} = T^{r-1}(h'/k)+\1_{b_1=1}$ (the contribution of $\1_{b_1=1}$ being negligible). We apply~\cite{BettinDrappeau} (Theorem~1.3 with~$\lambda=1$, complemented by Theorem~1.2 with~$\gamma(x) = \{1/x\}$), and obtain that the estimate~\eqref{eq:conj} holds with $\log \J_{4_1,0}(\e[{ h}/k])$ replaced by $\phi_\eps(h/k)$ and
  $$ D_K = \frac{1-\gamma_0 - \log 2}{12/\pi^2} + \frac{3\pi}{\tn{Vol}(K)} + \frac{24}{\pi\tn{Vol}(K)}\int_0^1 \frac{\psi_\eps(1/x)\df x}{1+x}, $$
  and with an error term~$o_\eps(N^2)$. The result then follows by letting $\eps\to0^+$ sufficiently slowly with respect to $N$, and making the change of variables $h/k\to h'/k$ on the left hand side of~\eqref{eq:conj}.
\end{proof}

\begin{proof}[Proof of Corollary~\ref{concor}]
  We prove the result in the case where $x\in[0,1]\setminus\Q$ and for $y\to x^-$, the other cases being analogous. Let $x=[0;b_1,b_2,\dots]$ and let $h/k=[0;b_1,b_2,\dots,b_{2n},X,Y]$ for some $X,Y\in\N_{>0}$. Then, $h/k\to x^{-}$ as $n\to\infty$, uniformly in $X>b_{2n+1}$ and $Y$. We have $k/h\equiv [0;b_2,\dots,b_{2n},X,Y]\mod 1$ and so by~\cite[(1.2)-(1.3) and Lemma~4]{Bettin2015}
  \begin{equation*}
    \frac1h\cz(\overline k/h)=\frac{1}{\pi}\sum_{\ell=1}^{2n+1}\frac{(-1)^\ell\log(v_{\ell-1}/v_{\ell})}{v_{\ell-1}}+O(n)
  \end{equation*}
  uniformly in $X,Y$, where $v_{i}$ denotes the partial quotient of $ [0;b_2,\dots,b_{2n},X,Y]$. Now, let $B=2+\max_{1\leq i\leq 2n}b_i$. Then, for $\ell<2n$, we have 
  $ v_{\ell}/v_{\ell-1}<B$, whereas $v_{2n}/v_{2n-1}=X+O(1)$ and $v_{2n+1}/v_{2n}=Y+O(1)$. Also, $v_{2n}=X{v_{2n-1}}+{v_{2n-2}}$ and $v_{2n-2}\leq v_{2n-1}\leq B^{2n-1}$. It follows, that
  \begin{equation*}
    \frac\pi h\cz(\overline k/h)=\frac{\log Y}{X{v_{2n-1}}+{v_{2n-2}}}-\frac{\log X}{v_{2n-1}}+O(n\log B)=-\frac{\log X}{v_{2n-1}}(1+o(1))
  \end{equation*}
  as $n, X, Y\to\infty$ under the constraint $ Y n B^{2n}=o(\log X)$.
  By~\eqref{eq:srf} and~\cite[Theorem~1]{BettinDrappeaub} we then have
  \begin{equation*}
    H_{4_1}^*(h/k)=-\frac{\log X}{v_{2n-1}}(1+o(1))(1+O(1/Y))+O\Big(Y+\frac{\log X}{Y v_{2n-1}}\Big)
  \end{equation*}
  and this goes to $-\infty$ as $n,X,Y\to\infty$ with $ Y n B^{2n}=o(\log X)$.
\end{proof}

\bibliographystyle{./plain.bst}
\bibliography{../bib.bib}

\end{document}